\theoremstyle{definition}
\newtheorem{definition}{Definition}[section]
\theoremstyle{theorem}
\newtheorem{theorem}{Theorem}[section]
\theoremstyle{theorem}
\newtheorem*{theorem*}{Theorem}
\theoremstyle{theorem}
\newtheorem{lemma}{Lemma}[section]
\theoremstyle{theorem}
\newtheorem{proposition}{Proposition}[section]
\theoremstyle{theorem}
\theoremstyle{corollary}
\newtheorem{corollary}{Corollary}[section]
\theoremstyle{remark}
\newtheorem{remark}{Remark}[section]
\DeclareMathOperator{\G}{\mathbb{G}}
\DeclareMathOperator{\X}{\mathbb{X}}
\DeclareMathOperator{\Y}{\mathbb{Y}}
\DeclareMathOperator{\x}{\mathbf{x}}
\DeclareMathOperator{\y}{\mathbf{y}}
\DeclareMathOperator{\z}{\mathbf{z}}
\DeclareMathOperator{\w}{\mathbf{w}}
\DeclareMathOperator{\Rect}{Rect}
\DeclareMathOperator{\Pent}{Pent}
\DeclareMathOperator{\Hex}{Hex}
\DeclareMathOperator{\Int}{Int}
\DeclareMathOperator{\Long}{Long}
\theoremstyle{theorem}
\newtheorem*{integer}{Theorem \ref{integer}}
\theoremstyle{theorem}
\newtheorem*{skein}{Theorem \ref{skein}}
\theoremstyle{theorem}
\newtheorem*{alt}{Theorem \ref{alt}}
\theoremstyle{theorem}
\newtheorem*{tor}{Theorem \ref{tor}}
\begin{document}

\title{Combinatorial Proofs of Properties of Double-Point Enhanced Grid Homology}
\author{Ollie Thakar}
\maketitle

\abstract{We provide a purely combinatorial proof of a skein exact sequence obeyed by double-point enhanced grid homology. We also extend the theory to coefficients over $\mathbb{Z},$ and discuss alternatives to the Ozsv\'ath-Szab\'o $\tau$ invariant.}

\tableofcontents

\section*{Organization of the Paper}

Knot Floer homology ([OS], [R03]) is a powerful knot invariant defined similarly to Heegaard Floer homology, exhibiting many desirable properties such as detecting Seifert genus and fiberedness sharply. The definition and subsequent computations rely heavily on holomorphic geometry. However, in [MOS], Ciprian Manolescu, Peter Ozsv\'{a}th, and Sucharit Sarkar define a combinatorial knot invariant known as grid homology, denoted $GH^-$, using a grid diagram of a knot (see also [MOST, OSS].) Grid homology provides a more concrete way to compute knot Floer homology. Indeed, this invariant is in fact isomorphic to knot Floer homology, but many of its topological virtues can be proven purely combinatorially, such as lower bounds it provides on unknotting number and 4-ball genus.

A slight variation on their definition, by Robert Lipshitz, is known double-point enhanced grid homology, which we notate $GHL$ (see [L] and [OSS Chapter 5].) It remains unknown whether the double-point enhanced grid homology actually encodes new information beyond what is accessible to regular grid homology. Specifically, it is conjectured that for all knots $K,$ we have $GH^-(K)[v]\cong GHL(K)$ as bigraded $\mathbb{F}[U, v]$-modules (where $\mathbb{F}$ is the field of 2 elements.) While we do not settle this question in the paper, we do prove two properties of double-point enhanced grid homology that we already knew to be obeyed by grid homology. Earlier work of Timothy Ratigan, Joshua Wang, and Luya Wang finds a purely combinatorial proof that $GHL$ is indeed a knot invariant, and conjectures both properties of double-point enhanced grid homology that we prove in this paper as Theorems \ref{integer} and \ref{skein} (see [RWW].)

The first section of the paper reviews grid homology as defined in [OSS], with a few theorems that are relevant to our later pursuits. Sections 3 and 4 define the double-point enhanced grid homology of a knot and provide useful definitions and lemmas. The goal of the paper is to prove two important theorems. The first, stated below, proves that double-point enhanced grid homology admits an integer-coefficient version which is a knot invariant. We will prove a slightly stronger statement in Section 5 along the way.

\begin{integer}
For each grid diagram of a knot, there exists a homology group $GHL_S(\G; \mathbb{Z}),$ which, as a bigraded $\mathbb{Z}[U, v]$-module, is also a knot invariant. Furthermore, $GHL_S(\G; \mathbb{Z})$ is the homology of a chain complex, which when we take the homology of the mod 2 version gives us the double-point enhanced grid homology $GHL(\G).$
\end{integer}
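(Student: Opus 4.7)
The plan is to mimic the strategy used in [MOST] for lifting grid homology from $\mathbb{F}$ to $\mathbb{Z}$, adapted to the double-point enhanced setting. First I would fix a sign assignment $S$ on the set of empty rectangles in a grid diagram—a function $S \colon \Rect^{\circ} \to \{\pm 1\}$ satisfying the appropriate square and commutation relations—extending [MOST]'s construction so that it continues to behave well alongside the additional rectangles contributing the $v$-weighted terms in the double-point enhanced differential. The chain complex $GC^-_S(\G;\mathbb{Z})$ would then be defined as a free $\mathbb{Z}[U,v]$-module on grid states, with differential
\[
\partial_S \x \;=\; \sum_{\y}\,\sum_{r \in \Rect^{\circ}(\x,\y)} S(r)\, U^{O(r)}\, v^{D(r)}\,\y,
\]
where $D(r)$ counts the double points (i.e.\ $X$-markings in $r$) as in Lipshitz's definition.

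Next I would verify $\partial_S^2 = 0$ over $\mathbb{Z}$. This is the main technical step: one classifies juxtapositions of two rectangles by how they meet (disjoint, sharing a corner, overlapping, etc.) and shows each class admits a canonical involution pairing compositions with opposite signs. The pure-rectangle cases are handled by the argument for $GH^-$; the new content is to check that the involution still works when the participating rectangles contain $X$-markings, so that the paired contributions carry matching $v$-weights and cancel. Because $S$ depends only on geometry, not on $X/O$ data, the cancellation survives the insertion of the $v^{D(r)}$ factors.

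Invariance under the three grid moves would follow by lifting the chain maps and homotopies from [RWW] to signed analogs. For commutation, I would sign-refine the usual pentagon-counting chain map and verify the pentagon-plus-rectangle identities hold with signs; for stabilization, I would adapt the one-step destabilization argument with a compatible sign assignment on the stabilized diagram, again keeping track of the $v$-weighted terms. At each stage the arguments parallel the $\mathbb{F}$-coefficient proofs in [RWW], with signs determined by the analogous geometric identities among pentagons, hexagons, and rectangles. The resulting bigraded $\mathbb{Z}[U,v]$-module $GHL_S(\G;\mathbb{Z})$ is then the desired knot invariant.

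Finally, because the signs are $\pm 1$, reducing the chain complex modulo $2$ replaces every coefficient by $1$ and recovers precisely the $\mathbb{F}[U,v]$-complex computing $GHL(\G)$, so the homology of the mod $2$ reduction is $GHL(\G)$ by definition. The main obstacle I expect is the sign-bookkeeping in the stabilization step, where the double-point weights introduce extra terms in the chain homotopies whose cancellation patterns must be checked case by case; the signed $\partial_S^2 = 0$ computation is a close second, but both run closely parallel to existing arguments already present in the $GH^-$ literature and in [RWW].
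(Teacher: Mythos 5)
Your overall architecture (sign assignment on rectangles, signed $\partial_S^2=0$, sign-refined pentagon/hexagon maps for commutation, mapping-cone argument for stabilization, mod $2$ reduction) is the same as the paper's. But there is a genuine gap at the very first step: you have the wrong differential. In double-point enhanced grid homology the rectangles counted are still required to satisfy $r\cap\X=\emptyset$, and the exponent of $v$ is \emph{not} the number of $X$-markings in $r$; it is $\mathcal{T}(r)=|\Int(r)\cap\x|$, the number of points of the initial grid state in the interior of the rectangle (the double points of the associated immersed disk). Your version, which drops the emptiness condition and weights by $X$-multiplicities, is a different complex, and your subsequent claim that ``the cancellation survives the insertion of the $v^{D(r)}$ factors'' because the weights obviously match is only true for your (incorrect) additive count $X(\psi)=X(r_1)+X(r_2)$. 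For the correct count the equality of degrees of the two decompositions of a domain $\psi=r_1*r_2=r_1'*r_2'$ is \emph{not} automatic --- the intermediate grid states $\y$ and $\y'$ differ, so $|\Int(r_1)\cap\x|+|\Int(r_2)\cap\y|$ versus $|\Int(r_1')\cap\x|+|\Int(r_2')\cap\y'|$ requires the case analysis of Lemmas \ref{rectangles4} and \ref{rectangles3}, which is the real combinatorial content of the proof. Moreover, with your definition the thin annuli (the $\x=\z$ case) would no longer be excluded by the condition $\psi\cap\X=\emptyset$, and $\partial_S^2=0$ would fail.

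Two further omissions: first, to get a $\mathbb{Z}[U,v]$-module structure one must show the $V_i$ actions are chain homotopic, and in the double-point enhanced setting the homotopy operator counts \emph{long} rectangles, so the sign assignment must first be extended to $\Rect^*(\G)$ compatibly with the associated short rectangles (this is where the paper spends real effort via the spin extension $\tilde{\mathfrak{S}}_n$); defining the complex directly over $\mathbb{Z}[U,v]$ with $U^{O(r)}$ sidesteps this but computes the fully collapsed complex, which is not the standard invariant. Second, the pentagon/hexagon identities with signs require assigning signs to pentagons via their associated rectangular domains together with a parity correction $(-1)^{M(\x)+B(p)}$, and verifying the degree bookkeeping for long pentagons and hexagons; this is parallel to but not contained in the unsigned arguments of [RWW].
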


The subscript $S$ in this notation refers to a \emph{sign assignment,} a function on certain rectangles in $\G.$ We will initially define $GHL_S(\G; \mathbb{Z})$ in terms of one such function $S$ and then prove it is invariant under change in $S.$

The second theorem we prove shows that this homology $GHL_S(\G; \mathbb{Z}),$ which we call integral double-point enhanced grid homology, obeys a skein exact sequence. We extend $GHL$ to a link invariant $cGHL_m(L, a).$ Omitting the subscript $S$ and $\mathbb{Z}$ from the notation, we may state the theorem:

\begin{skein}
Let $(L_+, L_-, L_0)$ be an oriented skein triple, with $\ell$ and $\ell_0$ the number of components of $L_+$ and $L_0$ respectively. If $\ell_0=\ell+1,$ then there is a long exact sequence where the maps below fit together to be homomorphisms of $\mathbb{Z}[U, v]$-modules:

$$\to cGHL_m(L_+, s) \to cGHL_m(L_-, s) \to cGHL_{m-1}(L_0, s) \to cGHL_{m-1}(L_+, s) \to$$

Let $J$ be the 4-dimensional bigraded abelian group $J\cong \mathbb{Z}^4$ with one generator in bigrading $(0,1),$ one generator in bigrading $(-2, -1),$ and two generators in bigrading $(-1, 0).$

If $\ell_0=\ell-1,$ then there is a long exact sequence where the maps below fit together to be homomorphisms of $\mathbb{Z}[U, v]$-modules:

$$\to cGHL_m(L_+, s) \to cGHL_m(L_-, s) \to cGHL_{m-1}(L_0, s)\otimes J \to cGHL_{m-1}(L_+, s) \to$$
\end{skein}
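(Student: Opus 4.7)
The plan is to mimic the skein exact sequence proof for ordinary grid homology (as in [OSS, Chapter 9]) while carefully tracking the additional $v$ variable and the integer sign assignment produced by Theorem \ref{integer}. First I would fix a grid presentation in which $L_+$, $L_-$, and $L_0$ are represented by three grid diagrams $\G_+$, $\G_-$, $\G_0$ that agree outside a $2\times 2$ window and differ only by the arrangement of the $O$- and $X$-markings in that window. Because $\G_\pm$ have the same underlying torus and $\alpha$-, $\beta$-circles, their generators coincide as sets, and their differentials differ only in the contribution of rectangles that cross the window. This reduces the construction of the skein maps to a local combinatorial problem.

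Next I would define the skein chain maps by counting \emph{pentagons} through the distinguished marking in the window, weighted by the $U$- and $v$-exponents dictated by which $O$-markings and double points the pentagon encloses, and signed according to an extension of the sign assignment $S$ from rectangles to pentagons. I would verify in turn that (i) each map is a chain map, which reduces to a domain-decomposition argument classifying regions that factor as a rectangle followed by a pentagon or as two overlapping pentagons, and (ii) the triple $(C(\G_+),C(\G_-),C(\G_0))$ fits into a short exact sequence of bigraded $\mathbb{Z}[U,v]$-modules, where the map $C(\G_-)\to C(\G_0)$ is a projection and its kernel is identified with $C(\G_+)$ up to a shift in Maslov grading; the required grading shift of $1$ in the statement emerges from this identification. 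Passing to homology then yields the long exact sequence in the case $\ell_0=\ell+1$.

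For the case $\ell_0=\ell-1$ the diagram $\G_0$ has an extra component relative to $\G_\pm$, so the sequence above must be modified by a stabilization step. I would realize $\G_0$ as the stabilization of a smaller grid $\G_0'$ at the relevant marking, and use the integer stabilization invariance (a consequence of Theorem \ref{integer} and its proof) to identify $GHL_S(\G_0;\mathbb{Z})$ with $GHL_S(\G_0';\mathbb{Z})\otimes J$, where $J$ is the bigraded group specified in the statement. The factor $J$ arises exactly as in the unoriented skein sequence for $GH^-$: the extra component contributes a free $\mathbb{F}[U,v]$-summand which, once signs and the $v$-grading are accounted for, has the four generators listed in bigradings $(0,1)$, $(-1,0)$, $(-1,0)$, $(-2,-1)$. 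Substituting this identification into the long exact sequence above yields the second assertion.

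I expect the main obstacle to be step (ii) in the integer-coefficient setting: verifying that the pentagon-counted maps assemble into a short exact sequence of chain complexes requires extending the sign assignment $S$ consistently to pentagons and checking the analogues of the ``rectangle + rectangle = pentagon + pentagon'' identities with the correct signs and $v$-exponents. This is a finite combinatorial verification by case analysis on how pentagons overlap rectangles within the window, but doing it with signs over $\mathbb{Z}$ rather than over $\mathbb{F}$ is the place where genuine new work, beyond the mod-2 argument of [RWW] and [OSS], is required.
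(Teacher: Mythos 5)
There is a genuine gap, and it is architectural. You assert that the three complexes fit into a short exact sequence in which $C(\G_-)\to C(\G_0)$ is a projection with kernel $C(\G_+)$ (up to a shift). No such projection exists: following [OSS Chapter 9], the proof requires \emph{four} diagrams $\G_+,\G_-,\G_0,\G_0'$ drawn on one torus, with each of the four complexes realized as the mapping cone of one of the partial differentials $\partial^{\mathbf{N}}_{\mathbf{I}},\partial^{\mathbf{I}}_{\mathbf{N}},\partial^{\mathbf{N}'}_{\mathbf{I}'},\partial^{\mathbf{I}'}_{\mathbf{N}'}$ between the pieces $\mathbf{I},\mathbf{N},\mathbf{I}',\mathbf{N}'$. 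These assemble into a commutative square whose total complex is simultaneously the cone of a map $GCL_S(\G_+)[[-1,0]]\to GCL_S(\G_-)$ (giving the long exact sequence) and the cone of a connecting map $GCL_S(\G_0')\to GCL_S(\G_0)$. The heart of the proof — entirely absent from your proposal — is identifying this second cone with the cone of $V_2-V_4$ on $cGCL_S(\G_0)$: one composes the pentagon quasi-isomorphism $P$ with the connecting map and proves, via homotopy operators $h_{X_2}$, $h_Y$, $h_{X_2,Y}$ counting (possibly long) rectangles through the distinguished markings, the identities $(-1)^M P\circ T\circ\partial^{\mathbf{I}'}_{\mathbf{N}'}=h_{X_2}\circ h_Y$, $(-1)^{M+1}P\circ\partial^{\mathbf{N}}_{\mathbf{I}}\circ T=h_Y\circ h_{X_2}$, and $h_{X_2}\circ h_Y+h_Y\circ h_{X_2}+h_{X_2,Y}\circ\partial+\partial\circ h_{X_2,Y}=V_2-V_4$. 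Verifying that the two decompositions of each contributing domain have equal $v$-degree (which forces one to handle long rectangles via the decomposition lemmas \ref{rectangles4}--\ref{rectangles0}) is where the genuinely new double-point work lies; the sign verification you flag as the main obstacle is by comparison routine, as it is inherited from the sign machinery already set up for commutation invariance.

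Your treatment of the $\ell_0=\ell-1$ case is also off target: the factor $J$ does not arise from realizing $\G_0$ as a stabilization. It arises algebraically from the cone of $V_2-V_4$ on the collapsed complex. When the oriented resolution merges two components, $V_2$ and $V_4$ act identically on $cGCL(\G_0)$ (up to homotopy), so the cone splits as a direct sum of two shifted copies; combined with the comparison of collapsed complexes with differing numbers of identified $V$-variables, this produces exactly the four generators of $J$ in bigradings $(0,1),(-1,0),(-1,0),(-2,-1)$. A stabilization argument would change the link type of the diagram being compared and in any case would not produce a tensor factor on homology.
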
 

Finally, for the last two sections we return to $\mathbb{F}$ coefficients for simplicity. Section 7 presents some more concrete invariants that can be extracted out of double-point enhanced grid homology, and their potential use in proving that the two invariants do not encode different information. Section 8 computes the double-point enhanced grid homology of alternating knots and torus knots over the field of 2 elements using a spectral sequence, and shows that in these cases the conjecture that $GH^-(K)[v]\cong GHL(K)$ holds with one caveat. The spectral sequence loses information about the $v$ action, hence these two below theorems are stated under a weaker-than-ideal condition.

\begin{alt}
If $K$ is a quasi-alternating knot, then $GHL(K) \cong GH^-(K)[v]$ as bigraded $\mathbb{F}[U]$-modules.
\end{alt}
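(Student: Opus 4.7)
The plan is to build a spectral sequence converging to $GHL(K)$ and show it collapses when $K$ is quasi-alternating. I would filter the double-point enhanced grid chain complex $GCL^-(\G)$ by the $v$-adic filtration $\mathcal{F}^p = v^p \cdot GCL^-(\G)$. The differential of $GCL^-$ splits as $\partial_0 + v \cdot \partial_1$, where $\partial_0$ counts empty rectangles and $\partial_1$ counts rectangles containing exactly one double point; both terms preserve this filtration. The associated spectral sequence has $E_0$-differential $\partial_0$, so its $E_1$-page is $GH^-(K) \otimes_{\mathbb{F}} \mathbb{F}[v]$ as a bigraded $\mathbb{F}[U,v]$-module, and it converges to $GHL(K)$.

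Next, I would invoke the classical fact that $GH^-(K)$ is \emph{thin} for quasi-alternating $K$: its generators occupy a single diagonal $\delta = A - M/2$ in the $(M,A)$-bigrading. This is established by induction on the determinant of $K$ using the ordinary skein exact sequence for $GH^-$ proved in [OSS], with the unknot as base case. Writing $(\alpha,\beta)$ for the bigrading shift of $v$ in the double-point enhanced theory, thinness of $GH^-(K)$ implies that the $E_1$-page $GH^-(K)[v]$ is supported on a discrete set of parallel diagonals, one per $v$-power. The higher differential $d_r$ on the $E_r$-page raises the $v$-filtration by $r$ while shifting total bigrading by $(-1,0)$; comparing this shift against the spacing of the supported diagonals should force $d_r=0$ for all $r \geq 2$, so the sequence collapses at $E_2$.

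Once the spectral sequence collapses, the associated graded of $GHL(K)$ with respect to the $v$-filtration is isomorphic to $GH^-(K)[v]$ as a bigraded $\mathbb{F}[U]$-module. Because the filtration is by the principal ideal $(v)$ and the filtration quotients are free $\mathbb{F}[U]$-modules, the resulting $\mathbb{F}[U]$-extensions split, yielding the desired isomorphism $GHL(K) \cong GH^-(K)[v]$ as bigraded $\mathbb{F}[U]$-modules. The $v$-action on $GHL(K)$ is precisely what the associated-graded step forgets, which is why the statement is weaker than the full conjecture over $\mathbb{F}[U,v]$. The main obstacle will be the bigrading bookkeeping in the second paragraph, in particular verifying that the shift $(\alpha,\beta)$ of $v$ really is incompatible with the diagonal structure for every $r \geq 2$; if the diagonals were to collapse onto one another, the argument would need to be sharpened by appealing to additional structure on $\partial_1$, such as its interaction with the $U$-action inherited from the grid complex.
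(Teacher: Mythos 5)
Your overall strategy---the $v$-adic filtration on $GCL(\G)$, the spectral sequence whose first interesting page is $GH^-(K)[v]$ and which converges to $GHL(K)$, and collapse for support reasons when $K$ is quasi-alternating---is exactly the route the paper takes. But there is a genuine gap at the step you yourself flag as ``the main obstacle,'' and it is compounded by a misstatement of the structural input. Thinness of a quasi-alternating knot is a statement about $\widehat{GH}(K)$; it does \emph{not} imply that $GH^-(K)$ lies on a single diagonal $\delta = A - M/2$. What [OSS Chapter 10] actually gives, and what the paper uses, is that $GH^-(K)$ splits as a free tower $\mathbb{F}[U]_{(-2\tau,-\tau)}$ supported on the line $M-2A=0$ together with a $U$-torsion part (length-one towers) supported on the line $M-A=-\tau$. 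These are two lines of \emph{different} slopes in the $(M,A)$-plane, so the ``discrete set of parallel diagonals'' you propose to compare $d_r$ against does not exist, and a pure diagonal-spacing argument cannot close the proof. The bookkeeping that works treats the two pieces separately: each $d_r$ is $U$-equivariant, so it carries $U$-torsion classes to $U$-torsion classes; within a fixed Alexander grading all torsion classes of $GH^-(K)[v]$ have Maslov gradings of one parity, while $d_r$ has total bidegree $(-1,0)$ and reverses that parity, so $d_r$ vanishes on the torsion part. On the free tail, a nonzero $d_r$ would need a target with $v$-power raised by $r-1$, the same Alexander grading, and Maslov grading lowered by $1$, and one checks that neither supporting line contains such a class. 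This is precisely the ``additional structure on $\partial_1$, such as its interaction with the $U$-action'' that you anticipated needing; without carrying it out the proof is incomplete.

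A secondary error: your justification for splitting the $\mathbb{F}[U]$-extensions at the end---that ``the filtration quotients are free $\mathbb{F}[U]$-modules''---is false, since each filtration quotient is a copy of $GH^-(K)$, which has $U$-torsion whenever $K$ is not the unknot. The identification of the abutment with the $E_\infty$ page as a bigraded $\mathbb{F}[U]$-module needs its own (short) argument; the paper absorbs this into the statement of its spectral sequence theorem rather than into the quasi-alternating computation.
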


\begin{remark}
The family of quasi-alternating knots is a family that contains the alternating knots but is strictly larger (see [OSS Chapter 10].)
\end{remark}

\begin{tor}
If $K$ is a torus knot, then $GHL(K) \cong GH^-(K)[v]$ as bigraded $\mathbb{F}[U]$-modules.
\end{tor}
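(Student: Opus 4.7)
The plan is to mirror the strategy used in the proof of Theorem \ref{alt}, applying the same spectral sequence from Section 8 to the torus knot $T_{p,q}$. That spectral sequence has $E^2$ page isomorphic, as a bigraded $\mathbb{F}[U]$-module, to $GH^-(K)[v]$, and converges to $GHL(K)$. To establish the isomorphism, it suffices to show that for torus knots every higher differential $d_r$ with $r\ge 2$ vanishes, so that the sequence degenerates at $E^2$.

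First I would invoke the explicit computation of $GH^-(T_{p,q})$ as a ``staircase.'' Since torus knots are L-space knots, $\widehat{HFK}(T_{p,q})$ is one-dimensional in each Alexander grading in which it is nonzero, with Maslov and Alexander bigradings determined by the symmetrized Alexander polynomial of $T_{p,q}$. Correspondingly, $GH^-(T_{p,q})$ decomposes as a single tower $\mathbb{F}[U]$ together with finitely many torsion generators, arranged in the standard zig-zag pattern. Tensoring with $\mathbb{F}[v]$ (freely) produces $GH^-(T_{p,q})[v]$, whose bigraded generators lie at predictable lattice points in the $(m,s)$ plane.

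Next I would catalogue the bigrading shift of each differential $d_r$ in the spectral sequence. Every $d_r$ lowers Maslov grading by a fixed amount and shifts Alexander grading by an $r$-dependent amount coming from the $v$ filtration. Using the staircase enumeration, I would then verify, pair by pair, that no two generators of $GH^-(T_{p,q})[v]$ sit at bigradings differing by the shift of any $d_r$ with $r\ge 2$. The sparsity of the L-space staircase, combined with the fact that the free tower sits in a single column, should rule out all nontrivial targets on parity and position grounds. Consequently $E^\infty = E^2$, and we obtain $GHL(T_{p,q}) \cong GH^-(T_{p,q})[v]$ as bigraded $\mathbb{F}[U]$-modules.

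The hard part I expect is precisely this combinatorial bookkeeping: the staircase for $T_{p,q}$ contains $2g(T_{p,q})+1$ generators in a particular pattern, and each $v^k$-translate introduces another diagonal copy, so one must check that no translate of a staircase generator ever lands exactly on another staircase generator under the shift of any $d_r$. Organizing this so that the argument is uniform in $p$ and $q$, rather than case-by-case, will require careful bookkeeping of the gap sequence of the torus knot. Once that bookkeeping is in place, everything else is a direct application of the Theorem \ref{alt} machinery.
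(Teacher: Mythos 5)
Your overall strategy coincides with the paper's: run the Section~8 spectral sequence with $E_2\cong GH^-(T_{p,q})[v]$ and show it degenerates using the known staircase structure of $GH^-$ for L-space knots. But the mechanism you propose for killing the higher differentials is genuinely different from the paper's, and as written it has two soft spots. First, your description of the grading shifts is off: $v$ has bidegree $(2,0)$, so every $d_r$ \emph{preserves} the Alexander grading and lowers the total Maslov grading by exactly $1$ (it lowers the Maslov grading of the underlying $GH^-$ factor by $2r-1$ while raising the $v$-power by $r-1$); there is no ``$r$-dependent Alexander shift.'' Second, the ``pair by pair'' bookkeeping that you identify as the hard part is exactly the content of the proof, and you leave it as a promissory note. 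The good news is that once the shift is correctly computed to be $(-1,0)$ for every $r\geq 2$, the bookkeeping collapses to a single parity observation: by [OSS Theorem 16.2.6] and Formula 7.6, every $\mathbb{F}[U]$-module generator of $GH^-(T_{p,q})$ (the tower bottom at $(-2\tau,-\tau)$ and each torsion generator $(d_i,s_i)$, which sit at the positive coefficients of the Alexander polynomial) has even Maslov grading; since $U$ and $v$ shift Maslov grading by even amounts, every homogeneous generator of $GH^-(T_{p,q})[v]$ has even Maslov grading, so a bidegree-$(-1,0)$ map between them must vanish. If you supply that argument, your route is actually cleaner and more uniform in $p,q$ than the paper's, which instead splits into the infinite tail (handled by an Alexander-grading comparison) and the torsion part (handled by $U$-equivariance together with the one-dimensionality of $\widehat{GH}(T_{p,q})$ in each Alexander grading), and which must treat negative torus knots separately. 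What the paper's argument buys in exchange is that it does not require identifying the Maslov parities of the staircase generators, only the thinness of $\widehat{GH}$ in each Alexander grading.
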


\section{Acknowledgements}

I would like to thank Peter Ozsv\'{a}th for his advice and support throughout the project, as well as the idea for the project itself. Additionally, I would like to thank Zolt\'{a}n Szab\'{o}, Isabella Khan, Matthew Kendall, Luya Wang, and Joshua Wang --- who in particular noticed that Theorem \ref{alt} holds for quasi-alternating knots rather than only for alternating knots --- for helpful correspondences.

\section{Background on Grid Homology}

We begin with a brief summary of grid homology, a knot invariant defined in [OSS]. For this section, fix a knot $K$. A \emph{grid diagram} is an $n$-by-$n$ grid of squares such that there is one $X$ and one $O$ in each row and each column. We may retrieve a link from a grid diagram as follows. For each row and each column of squares, draw a line segment connecting the $X$ to the $O$ within that row or column respectively. Specify further that whenever two such segments intersect, the vertical segment crosses over the horizontal segments. Then, it is clear that the union of all these segments is a planar link diagram. [OSS Theorem 3.1.3] guarantees that there exists a grid diagram representing any link, in particular $K.$ Let $\G$ be such a diagram.

Denote by $\X$ the set of all the center points of the $X$-marked squares, and likewise denote by $\mathbb{O}$ the set of all the center points of the $O$-marked squares.

We consider $\G$ to be a fundamental domain of a torus $\mathbb{T}$ constructed by gluing opposite sides of $\G.$ It is clear that different fundamental domains of this torus represent isotopic links. Fix a coordinate system on a fundamental domain corresponding to the cardinal directions North, South, East, and West.

Call the horizontal circles formed by the edges of the grid of squares as $\alpha_1, \dots, \alpha_n,$ moving further north, and the vertical circles formed by the edges of the grid of squares as $\beta_1, \dots, \beta_n,$ moving further east.

\begin{definition}
A \emph{grid state} $\x$ of $\G$ is a set of $n$ points on $\mathbb{T}$ such that $|\x\cap\alpha_i|=1$ for all $i\in\{1,\dots, n\}$ and $|\x\cap\beta_i|=1$ for all $i\in\{1,\dots, n\}.$ (In other words, $\x$ is a set of $n$ intersection points of the $\alpha$ and $\beta$ curves, such that each curve is represented once.)

We denote the set of all grid states of $\G$ by $\mathbf{S}(\G).$
\end{definition}

\begin{definition}
A \emph{rectangle} in $\G.$ is an embedding of the closed unit disk $D^2$ into $\G$ such that $\partial D^2$ gets mapped into the union of the $\alpha$ and $\beta$ curves. Let $$A := \partial r\cap(\alpha_1\cup\dots\cup\alpha_n).$$ Then $A$ is a 1-manifold with boundary consisting of two line segments, and has an orientation induced from $r$ by moving clockwise around the boundary $\partial r.$. We say the rectangle $r$ connects two grid states $\x,\y\in\mathbf{S}(\G)$ (or goes from $\x$ to $\y$) if $\partial A = \y - \x,$ and such that the points in $\y\cap\partial A$ inherit a positive orientation from $A.$

The set of all rectangles from $\x$ to $\y$ is denoted by $\Rect(\x, \y).$
\end{definition}

We wish to create two functions $M$ and $A$ from $\mathbf{S}(\G)$ to $\mathbb{Z},$ which we define as follows.

\begin{definition}
Let $P$ and $Q$ be sets of finitely many points in a fundamental domain for $\G,$ which we may embed in $\mathbb{R}^2$ with standard Cartesian coordinates as the rectangle $[0, n)\times [0, n)$ such that each square in $\G$ is a unit square with integral coordinates for its corners. Then, we define $\mathcal{I}(P, Q)$ to be the number of pairs of points $(p_1, p_2)\times(q_1, q_2)\in P\times Q$ satisfying $p_1<q_1$ and $p_2<q_2.$ Now, let $$\mathcal{J}(P, Q) = \frac12(\mathcal{I}(P, Q) + \mathcal{I}(Q, P)).$$

Then, we let $$M(\x) := \mathcal{J}(\x, \x) - 2\mathcal{J}(\x, \mathbb{O}) + \mathcal{J}(\mathbb{O}, \mathbb{O}) + 1,$$ $$M_{\X}(\x) := \mathcal{J}(\x, \x) - 2\mathcal{J}(\x, \mathbb{X}) + \mathcal{J}(\mathbb{X}, \mathbb{X}) + 1,$$ and $$A(\x) = \frac12(M(\x) - M_{\X}(\x)) - \frac{n-1}{2}.$$ We call $M(\x)$ the \emph{Maslov grading} and $A(\x)$ the \emph{Alexander grading}, and we call the pair $(M, A)$ the \emph{bigrading} of $\x.$
\end{definition}

The below proposition is greatly helpful to our future ventures:

\begin{proposition}
\begin{itemize}
	\item Both $M$ and $A$ are integral-valued functions (note it is only clear from their definitions that they are half-integral valued.)
\end{itemize}
Suppose $\x$ and $\y$ are two grid states with some rectangle $r\in\Rect(\x, \y).$ Then, their Maslov and Alexander gradings are related by the following formulas:
\begin{itemize}
	\item $M(\x) - M(\y) = 1 - 2|r\cap\mathbb{O}| + 2|\x\cap\Int(r)|$
	\item $A(\x) - A(\y) = |r\cap\mathbb{X}| - |r\cap\mathbb{O}|$
\end{itemize}
\end{proposition}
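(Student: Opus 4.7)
The plan is to address the three claims in order, with the parity check for $A$ being the only nontrivial ingredient. For the integrality of $M$ and $M_\X$, I would first note that $\mathcal{J}(P, P) = \mathcal{I}(P, P)$ is automatically an integer and $2\mathcal{J}(P, Q) = \mathcal{I}(P, Q) + \mathcal{I}(Q, P)$ is automatically an integer for any finite sets $P, Q$, so each summand in the defining expression for $M$ or $M_\X$ is visibly an integer. For the integrality of $A$, since $A = \frac{1}{2}(M - M_\X) - \frac{n-1}{2}$ the question reduces to showing $M(\x) - M_\X(\x) \equiv n - 1 \pmod{2}$, and expanding the definitions (the $2\mathcal{J}$ terms drop out mod $2$) further reduces this to $\mathcal{I}(\mathbb{O}, \mathbb{O}) - \mathcal{I}(\mathbb{X}, \mathbb{X}) \equiv n - 1 \pmod 2$. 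I would introduce the permutations $\sigma_\mathbb{O}$ and $\sigma_\mathbb{X}$ of $\{1, \dots, n\}$ sending each row index to the column index of its $O$- or $X$-marking, use the identity $\mathcal{I}(P, P) = \binom{n}{2} - \mathrm{inv}(\sigma_P)$ to rewrite the parity as that of $\sgn(\sigma_\mathbb{X} \sigma_\mathbb{O}^{-1})$, and invoke the geometric fact that the cycles of $\sigma_\mathbb{X}\sigma_\mathbb{O}^{-1}$ are in bijection with the components of the link recovered from $\G$ (each cycle traces out one component by alternating horizontal $O{-}X$ and vertical $X{-}O$ segments). For a knot this permutation is then a single $n$-cycle of sign $(-1)^{n-1}$, giving the desired parity.

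For the grading-change formulas, I would let $\x \setminus \y = \{a, b\}$ and $\y \setminus \x = \{c, d\}$ be the four corners of $r$ and compute the differences $\mathcal{J}(\x, \x) - \mathcal{J}(\y, \y)$ and $\mathcal{J}(\x, \mathbb{O}) - \mathcal{J}(\y, \mathbb{O})$ by isolating the pairs involving at least one of the four changed points, since contributions from pairs lying entirely in $\x \cap \y$ cancel. A direct case analysis by which quadrant of each corner a given point of $\x \cap \y$ or $\mathbb{O}$ lies in should collapse these differences into expressions involving $|\x \cap \Int(r)|$ and $|r \cap \mathbb{O}|$ respectively, with the leftover constant from $\mathcal{J}(\{a,b\},\{a,b\}) - \mathcal{J}(\{c,d\},\{c,d\})$ producing the $+1$ in the Maslov formula. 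The Alexander formula then follows immediately by applying the same identity to both $M$ and $M_\X$ and observing that the constant $\frac{n-1}{2}$ in $A$ cancels in differences.

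The hardest step will be the parity check for $A$, specifically justifying that for a knot the permutation $\sigma_\mathbb{X}\sigma_\mathbb{O}^{-1}$ is an $n$-cycle. This follows from the construction of the link from $\G$, but a clean argument requires carefully matching the edge-following procedure to the cycle structure of $\sigma_\mathbb{O}^{-1}\sigma_\mathbb{X}$. The rectangle formulas, by contrast, are mechanical and should fall out of the quadrant case analysis without further subtlety, since all contributions away from $(\x \setminus \y) \cup (\y \setminus \x)$ cancel by symmetry.
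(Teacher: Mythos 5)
The paper does not actually prove this proposition; it defers entirely to [OSS Section 4.3], so your sketch is filling a gap rather than paralleling an argument in the text. Your plan is sound and, for the rectangle formulas, is exactly the standard computation: isolate the four corners, observe that points of $\x\cap\y$ in $\Int(r)$ contribute $2$ to $\mathcal{J}(\x,\x)-\mathcal{J}(\y,\y)$ while points outside contribute $0$, and get the $+1$ from the two corner pairs (with the OSS convention that $\x$ sits at the SW and NE corners; note the present paper's clockwise boundary convention would actually place $\x$ at the other two corners, a sign-convention wrinkle in the paper rather than in your argument). Your route to integrality of $A$ via $\mathcal{I}(P,P)=\binom{n}{2}-\mathrm{inv}(\sigma_P)$ and the fact that the cycles of $\sigma_{\X}\sigma_{\mathbb{O}}^{-1}$ biject with link components is a legitimate and arguably cleaner alternative to the argument in [OSS], which instead connects an arbitrary grid state to a preferred one by a chain of rectangles (so that the already-established change formula reduces integrality to a single evaluation). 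The one step you state too casually is that ``the $2\mathcal{J}$ terms drop out mod $2$'': the quantities $2\mathcal{J}(\x,\mathbb{O})=\mathcal{I}(\x,\mathbb{O})+\mathcal{I}(\mathbb{O},\x)$ and $2\mathcal{J}(\x,\X)$ are each congruent to $n$ mod $2$, not to $0$, so only their difference vanishes mod $2$. This does hold, but it needs the short argument that for a lattice point $p$ the parity of the number of markings of a set $Q$ lying strictly NE or strictly SW of $p$ depends only on $|Q|$ and the number of rows and columns of $Q$ weakly above and to the right of $p$ --- quantities that agree for $\mathbb{O}$ and $\X$ because each has exactly one marking per row and per column. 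With that parenthetical filled in, the proposal is complete.
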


The proof is found in [OSS Section 4.3], and is elementary but rather long.

\begin{remark}
For the remainder of this paper, let $\mathbb{F}$ represent the field of 2 elements $\mathbb{Z}/2\mathbb{Z}.$
\end{remark}

\begin{definition}
Let a \emph{domain} $\psi$ in $\G$ be any formal $\mathbb{Z}$-linear combination of squares in $\G$ (which may be defined as the closures of the connected components of $\G - (\alpha_1\cup\dots\cup\alpha_n\cup\beta_1\cup\dots\cup\beta_n).$) 

Again, the boundary of a domain inherits a clockwise orientation. We say a domain connects two grid states $\x$ and $\z$ in $\mathbf{S}(\G)$ if $$\partial(\partial \psi\cap(\alpha_1\cup\dots\cup\alpha_n)) = \z - \x,$$ with the $\z$-portions inheriting a positive orientation and the $\x$-portions a negative orientation. Let the set of all domains from $\x$ to $\z$ be denoted as $\pi(\x, \z).$

Let the multiplicity of the domain $\psi$ at the point $p\in\mathbb{G}$ be the coefficient of the square containing $p$ in the expression for $\psi$ as a combination of squares (we will only be considering multiplicities at points $p$ not on the boundary of any square.) Let $\psi(p)$ be the multiplicity of $\psi$ at the point $p.$

We say a domain is decomposed as a juxtaposition of two rectangles $r_1\in\Rect(\x, \y)$ and $r_2\in\Rect(\y, \z)$, and write $\psi = r_1*r_2$, if $\psi\in\pi(\x, \z)$ and the multiplicities satisfy the following formula: $$\psi(p)=r_1(p)+r_2(p)$$ for all $p\in\G$.
\end{definition}

We are now ready to define grid homology.

\begin{definition}
Let $\G$ be a grid diagram. We define the chain complex $GC^-(\G)$ to be the free $\mathbb{F}[V_1,\dots, V_n]$-module generated by the grid states of $\G,$ with $V_1^{k_1}\dots V_n^{k_n}\x$ having bigrading $$(M(\x)-2k_1-\dots-2k_n, A(\x)-k_1-\dots-k_n).$$

Let $\partial_0:GC^-(\G)\rightarrow GC^-(\G)$ be given as $$\partial_0\x = \sum_{\y\in\mathbf{S}(\G)}\sum_{r\in\Rect(\x, \y)} V_1^{O_1(r)}\dots V_n^{O_n(r)}\y.$$ [OSS Chapter 4] demonstrates that $\partial_0$ is indeed a differential, that is it is a homogeneous map of bidegree $(-1, 0)$ and $\partial_0^2=0,$ hence the homology of the chain complex $(GC^-(\G), \partial_0)$ is well-defined. We denote this homology as $GH^-(\G).$ 
\end{definition}

[OSS Chapter 5] proves that the action of each $V_i$ is identical on the level homology, and calling this action by $U,$ that $GH^-(\G)$ is in fact an invariant of the knot $K$ as a bigraded $\mathbb{F}[U]$-module; we may write it as $GH^-(K).$

\begin{definition}
The homology of the complex $(GC^-(\G)/(V_n = 0), \partial_0)$ is referred to as $\widehat{GH}(\G).$
\end{definition}

[OSS Chapter 5] proves that this homology is also a knot invariant $GH^-(\G)$ is in fact an invariant of the knot $K$ as a bigraded $\mathbb{F}$-vector space.

\subsection{Grid Moves}

We would like a way to relate any two grid representations of the same (oriented) link. We define three types of grid moves: commutations, switches, and stabilizations.

\begin{definition}
Consider two adjacent rows (resp. columns) of a grid diagram in a fundamental domain, and draw the (closed) line segments joining the $X$- and $O$- markings in each row (resp. column). Project these two line segments onto the horizontal (resp. vertical) axis. If either (1) the two projected line segments have disjoint supports or (2) one of the projected line segments completely contains the other, then swapping the two adjacent rows (resp. columns) is called a \emph{commutation}.

If the two projected line segments share a vertex, then then swapping the two adjacent rows (resp. columns) is called a \emph{switch}.
\end{definition}

Here, we picture a commutation:

\includegraphics[scale=0.4]{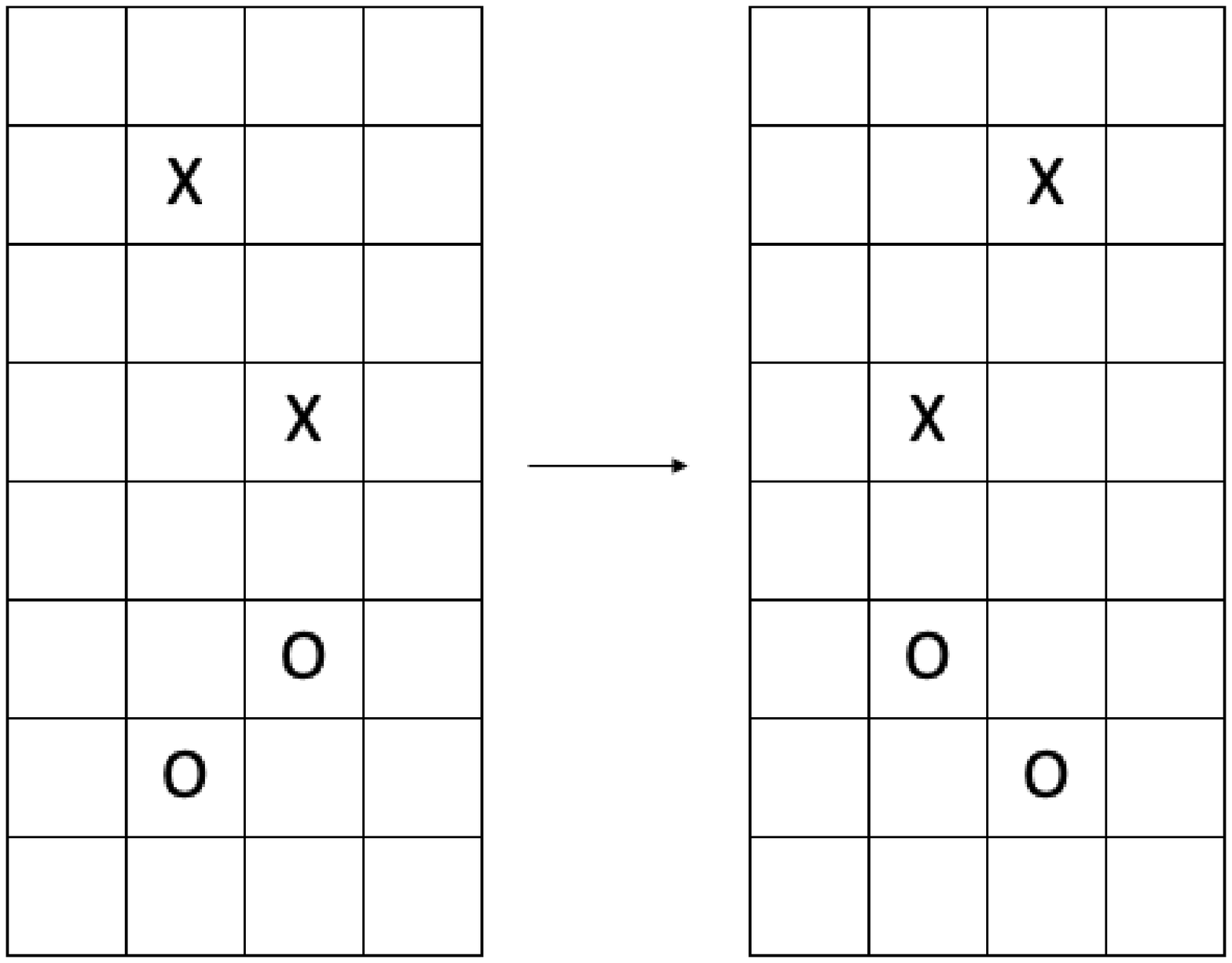}

\begin{definition}
Consider a square marked with an $X$ (resp. an $O$). Choose one of the following four directions, $NE, NW, SE, SW.$ Subdivide the row and column containing the $X$ (resp. $O$) so that $\G$ is now an $n+1$-by-$n+1$ grid diagram, and the square formerly containing the $X$ (resp. $O$) is now a 2-by-2 grid. Replace the $X$ (resp. $O$) with two $X$'s in the diagonal of this 2-by-2 grid that does not contain the chosen direction, and an $O$ (resp. $X$) in a third square of this 2-by-2 grid such that the unmarked square is the one corresponding to the chosen direction. This operation is known as a \emph{stabilization} of type $X:\text{direction}$ (resp. $O:\text{direction}$). Its inverse is known as a \emph{destabilization} of the corresponding type.
\end{definition}

Here, we picture a stabilization of type $X:SW$:

\includegraphics[scale=0.4]{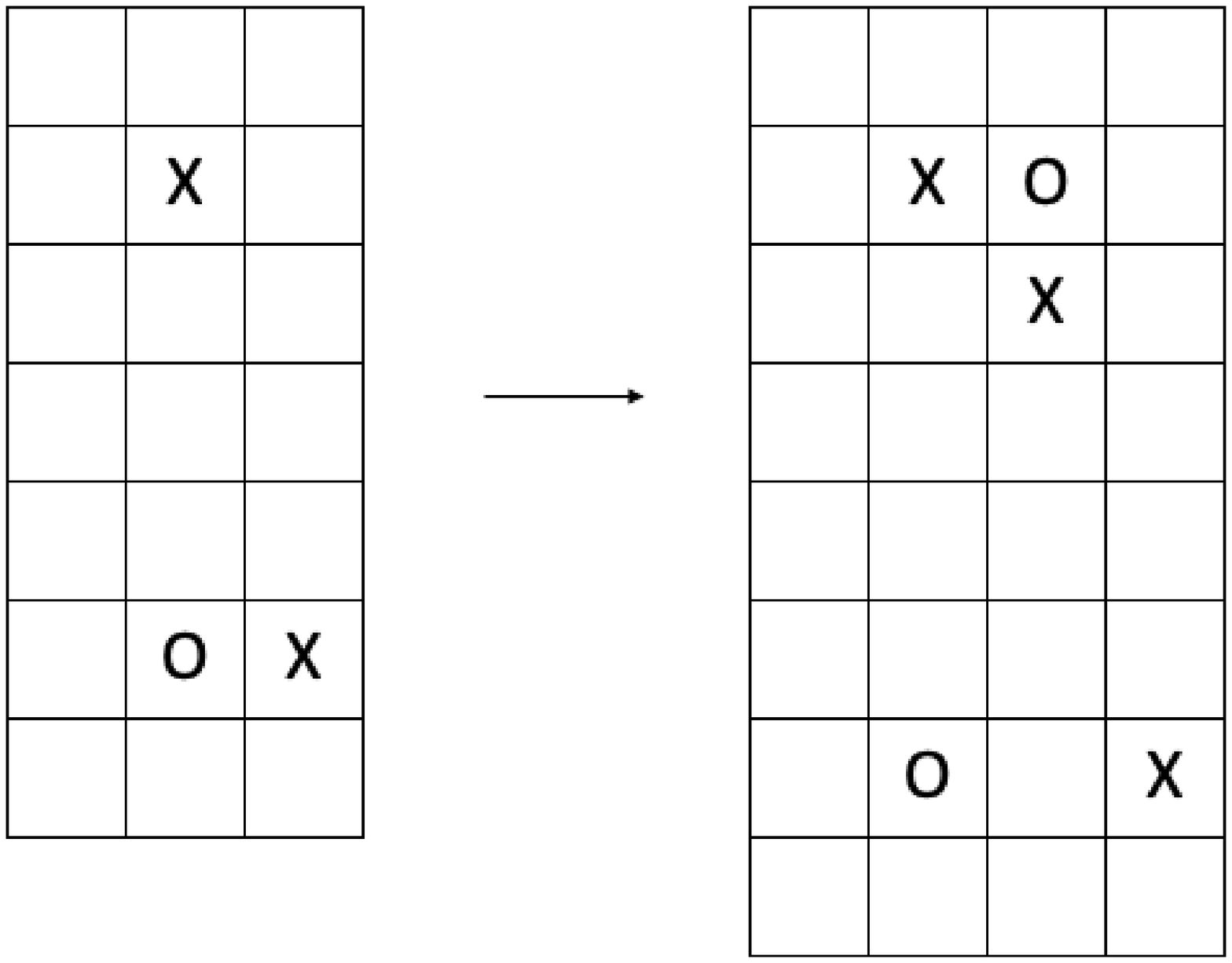}

The following theorem will prove extremely useful in showing invariance of double-point enhanced grid homology:

\begin{theorem}[generalized from Cromwell, see \text{[OSS Corollary 3.2.3]}]\label{cromwell}
Any two grid diagrams of the same (oriented) knot are related by a finite sequence of commutations, switches, and stabilizations and destabilizations of the form $X:SW.$
\end{theorem}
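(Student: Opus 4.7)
The plan is to start from the classical form of Cromwell's theorem, which states that any two grid diagrams of the same oriented knot are related by a finite sequence of commutations and of stabilizations/destabilizations of all eight types (one for each combination of a marking letter $X$ or $O$ and a direction in $\{NE, NW, SE, SW\}$). It then suffices to show that every stabilization of one of the other seven types can be simulated by a finite sequence of commutations, switches, and $X:SW$ stabilizations and destabilizations, since then any non-$X:SW$ stabilization appearing in a classical Cromwell sequence may be replaced by its simulation.

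The reduction proceeds in two stages. First, for a fixed marking letter, I would show that the four directional variants of a stabilization are interchangeable modulo commutations and switches. The idea is to perform an auxiliary $X:SW$ stabilization adjacent to the site of the target 2-by-2 block, use commutations together with a switch (whose defining overlap-of-projections condition is exactly what arises when two stabilization blocks sit side-by-side) to translate the resulting marking pattern into the configuration desired, and then destabilize to leave the intended end diagram. This shows that the four $X$-type directional stabilizations are interchangeable using only the allowed moves.

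The second and genuinely harder stage is to convert $O$-type stabilizations into $X$-type stabilizations; this is the main obstacle, because the two moves produce structurally different local patterns of markings, and there is no combinatorial move that directly swaps the roles of $X$ and $O$. The strategy is an indirect \emph{conjugation} trick: given the site of an $O$ stabilization, insert an $X:SW$ stabilization in a nearby row, apply a carefully chosen sequence of commutations and switches to slide the two 2-by-2 blocks past one another until the auxiliary $X:SW$ block lands in a position where it can be destabilized, and perform the $X:SW$ destabilization. The net effect on the grid is precisely the desired $O$ stabilization, up to further commutations. Combining the two stages lets us replace every stabilization appearing in a classical Cromwell sequence by moves of the allowed form, which proves the theorem.
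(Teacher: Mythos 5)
The paper does not actually prove this statement; it imports it from [OSS Corollary 3.2.3], whose proof is precisely the reduction you outline: start from Cromwell's theorem with all eight stabilization types and trade each one for an $X{:}SW$ stabilization together with commutations and switches. So your overall strategy is the right one. The difficulty is with the mechanism you propose for carrying it out. In both stages your recipe is ``perform one auxiliary $X{:}SW$ stabilization, rearrange by commutations and switches, then destabilize.'' Commutations and switches preserve the grid size, so this sequence returns an $n\times n$ diagram, whereas the move you are trying to simulate produces an $(n+1)\times(n+1)$ diagram; the net effect of your sequence therefore cannot be a stabilization of any type. No destabilization should occur: what must be shown is that the $X{:}SW$-stabilized diagram and the target stabilized diagram are related by commutations and switches alone.

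With that correction, stage one becomes a short direct check rather than a conjugation: for a fixed marking, the four directional stabilizations yield diagrams that differ by interchanging the two newly created rows and/or the two newly created columns, and each such interchange is a switch, because the new length-one segment shares an endpoint of its projection with the segment in the adjacent new row or column. Stage two is the genuinely hard part, and your sketch does not supply its content. The standard argument identifies an $O{:}d$ stabilization at an $O$-marking with a suitable $X{:}d'$ stabilization performed at the $X$-marking at the \emph{other end} of the edge of the link passing through that $O$: both insert a length-one jog into the same edge, at opposite ends, and one slides the jog from one end to the other by commuting (and occasionally switching) the intervening rows or columns past the newly created length-one row or column. Your proposal names neither the target $X$-marking nor the sliding sequence, so as written it asserts the key combinatorial fact rather than proving it.
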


\section{Double-Point Enhanced Grid Homology Notation}

\begin{definition}
Fix an $n$-by-$n$ grid diagram $\G$. We define a bigraded chain complex of free $\mathbb{F}[V_1,\dots, V_n, v]$-modules $GCL(\G)$ as follows. As a bigraded module, $GCL(\G) = GC^-(\G)[v]$, where if $\xi\in GC^-(\G)$ is homogeneous of bidegree $(M, A),$ then $v^k\xi$ is homogeneous of bidegree $(M+2k, A).$

We define a differential $$\partial\xi := \sum_{n=0}^\infty v^n\partial_n\xi,$$ where $\partial_n$ is defined on grid states by: $$\partial_n\x= \sum_{\y\in\mathbf{S}(\G)}\sum_{\{r\in\Rect(\x, \y)\big| r\cap\X = \emptyset, |\Int(r)\cap\x| = n\}} V_1^{O_1(r)}\dots V_n^{O_n(r)}\y,$$ and extends by linearity.
\end{definition}

\begin{proposition}
The map $\partial$ is indeed a differential, that is, $\partial^2=0.$
\end{proposition}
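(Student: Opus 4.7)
The plan is to adapt the classical proof that $\partial_0^2 = 0$ in ordinary grid homology (see [OSS Lemma 4.6.7]) while tracking the additional $v$-weights. Writing $\partial^2 = \sum_{N \geq 0} v^N \sum_{n+m=N} \partial_n \partial_m$, it suffices to show that each coefficient $\sum_{n+m=N} \partial_n \partial_m$ vanishes. The coefficient of $\z$ in $\sum_{n+m=N} \partial_n \partial_m \x$ is a count in $\mathbb{F}[V_1, \dots, V_n]$ of composable pairs $(r_1, r_2) \in \Rect(\x, \y) \times \Rect(\y, \z)$ (with $\y$ varying over $\mathbf{S}(\G)$) that both avoid $\X$ and satisfy $|\Int(r_1) \cap \x| + |\Int(r_2) \cap \y| = N$, each weighted by the monomial $V_1^{O_1(r_1) + O_1(r_2)} \cdots V_n^{O_n(r_1) + O_n(r_2)}$. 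Crucially, this $V$-weight depends only on the juxtaposition $\psi = r_1 * r_2 \in \pi(\x, \z)$, not on the decomposition.

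I would group composable pairs by their underlying domain $\psi$ and show that for each $\psi$, the decompositions into $\X$-avoiding rectangle pairs cancel in pairs even after accounting for the $v$-weights. Following the standard classification, the possibilities for $\psi$ are: (i) a disjoint union of two rectangles; (ii) an L-shape or hexagon formed by two rectangles sharing a corner or partially overlapping; or (iii) a thin horizontal or vertical annulus wrapping around the torus. Case (iii) is vacuous here because every row and column of $\G$ contains an $X$-marking, so any such annulus meets $\X$ and therefore cannot be split into two $\X$-avoiding rectangles. In cases (i) and (ii), $\psi$ admits exactly two decompositions $\psi = r_1 * r_2 = r_1' * r_2'$, and the problem reduces to verifying the identity $|\Int(r_1) \cap \x| + |\Int(r_2) \cap \y| = |\Int(r_1') \cap \x| + |\Int(r_2') \cap \y'|$.

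Case (i) is essentially immediate: the two decompositions are $(r_1, r_2)$ and $(r_2, r_1)$, and since the corners of each rectangle lie outside the support of the other, the intermediate state agrees with $\x$ on the interior of the opposite rectangle, so both $v$-exponents equal $|\Int(r_1) \cap \x| + |\Int(r_2) \cap \x|$. The main obstacle is case (ii): the two decompositions use genuinely different rectangles, and the intermediate states $\y$ and $\y'$ differ at two points that may lie in the interiors of the rectangles involved. I would handle this by a local analysis at the concave corner of the L-shape, writing the symmetric difference $\y \triangle \y'$ as the set of ``new'' corners introduced by the two cuts and checking case-by-case, according to the geometry of where those corners sit relative to $\Int(r_i)$ and $\Int(r_i')$, that the discrepancy between the two $v$-exponents is zero. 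Once this identity is established, the two contributions to the coefficient of $v^N \z$ agree as monomials in the $V_i$, they cancel modulo $2$, and hence $\partial^2 = 0$.
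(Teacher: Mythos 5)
Your proposal is correct and follows essentially the same route as the paper, which defers this statement to the more general signed version (Proposition \ref{diff}): group composable pairs by their underlying domain, note that the $V$-weight depends only on the domain, rule out annuli because every row and column contains an $X$-marking, and check that the two decompositions of each remaining domain carry the same power of $v$. The case-by-case verification of that last degree identity, which you defer to a local analysis, is exactly the content of the paper's Lemmas \ref{rectangles4} and \ref{rectangles3}, with Lemma \ref{rectangles0} handling the annulus case.
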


We shall prove a more general version of this proposition later, see \ref{diff}. For now, we take this for granted, and let $GHL(\G)$ denote the homology of the chain complex $(GCL(\G), \partial).$

For any toroidal grid diagram on a torus $\mathbb{T}$, we may consider the universal cover of the torus, which we identify with $\mathbb{R}^2$ and its standard $(x,y)$ Cartesian coordinates. Here, lifts of the $\alpha_i$ and $\beta_j$ curves, which we may call $\tilde{\alpha}_i,$ $\tilde{\beta}_j$ respectively, are the straight lines $y=n$ and $x=m$ as $n,m$ range over $\mathbb{Z}.$

\begin{definition}
Consider a rectangle $R$ of width one in $\mathbb{R}^2$ whose sides lie along the $\tilde{\alpha}_i,$ $\tilde{\beta}_j$ lines, and such that the projection of $R$ onto $\mathbb{T}$, which we call $r$, has multiplicity $2$ in at least one point, and multiplicity 1 in at least one point. Then, we call $r$ a \emph{long rectangle}. 

Note that $r$ is a domain in $\G,$ and it connects grid states analogously to ordinary rectangles. We denote by $\Rect^*(\x, \y)$ the set of rectangles and long rectangles from grid state $\x\in\mathbf{S}(\G)$ to $\y\in\mathbf{S}(\G).$
\end{definition}

\begin{definition}
We define the function $\mathcal{T}:\Rect^*(\x, \y)\rightarrow\mathbb{Z}_{\geq0}$ as follows. Let $r\in\Rect^*(\x, \y).$  If $r$ is long, then $\mathcal{T}(r)=1,$ and if $r$ is not long, then $\mathcal{T}(r) = |\Int(r)\cap\x|.$
\end{definition}

\begin{remark}
Note that we can now rewrite the differential $\partial$ more compactly as $$\partial\x= \sum_{\y\in\mathbf{S}(\G)}\sum_{\{r\in\Rect(\x, \y)\big| r\cap\X = \emptyset\}} v^{\mathcal{T}(r)}V_1^{O_1(r)}\dots V_n^{O_n(r)}\y.$$
\end{remark}

\begin{definition}
Let $\x, \z\in\mathbf{S}(\G),$ and suppose $\psi\in\pi(\x, \z)$ is a domain, with decomposition $\psi = r_1*r_2$ for $r_1\in\Rect^*(\x,\y)$ and $r_2\in\Rect^*(\y,\z).$ The \emph{degree} of the decomposition, which we will notate as $\deg(r_1, r_2),$ is defined as the sum: $$\deg(r_1, r_2) = \mathcal{T}(r_1)+\mathcal{T}(r_2).$$
\end{definition}

\begin{definition}
For a rectangle or long rectangle $r\in\Rect^*(\x, \y)$, the \emph{incoming corners} are precisely the members of $\partial r\cap\x,$ and the \emph{outgoing corners} are precisely the members of $\partial r\cap\y.$ 
\end{definition}

\subsection{For Commutation/Switch Invariance}

To prove invariance of grid homology under commutation and switch moves, and also to prove the skein exact sequence, we will require superimposing two grid diagrams $\G$ and $\G'$ differing by a commutation or switch as pictured in the below picture of the relevant portion of this superimposed diagram:

\includegraphics[scale=0.4]{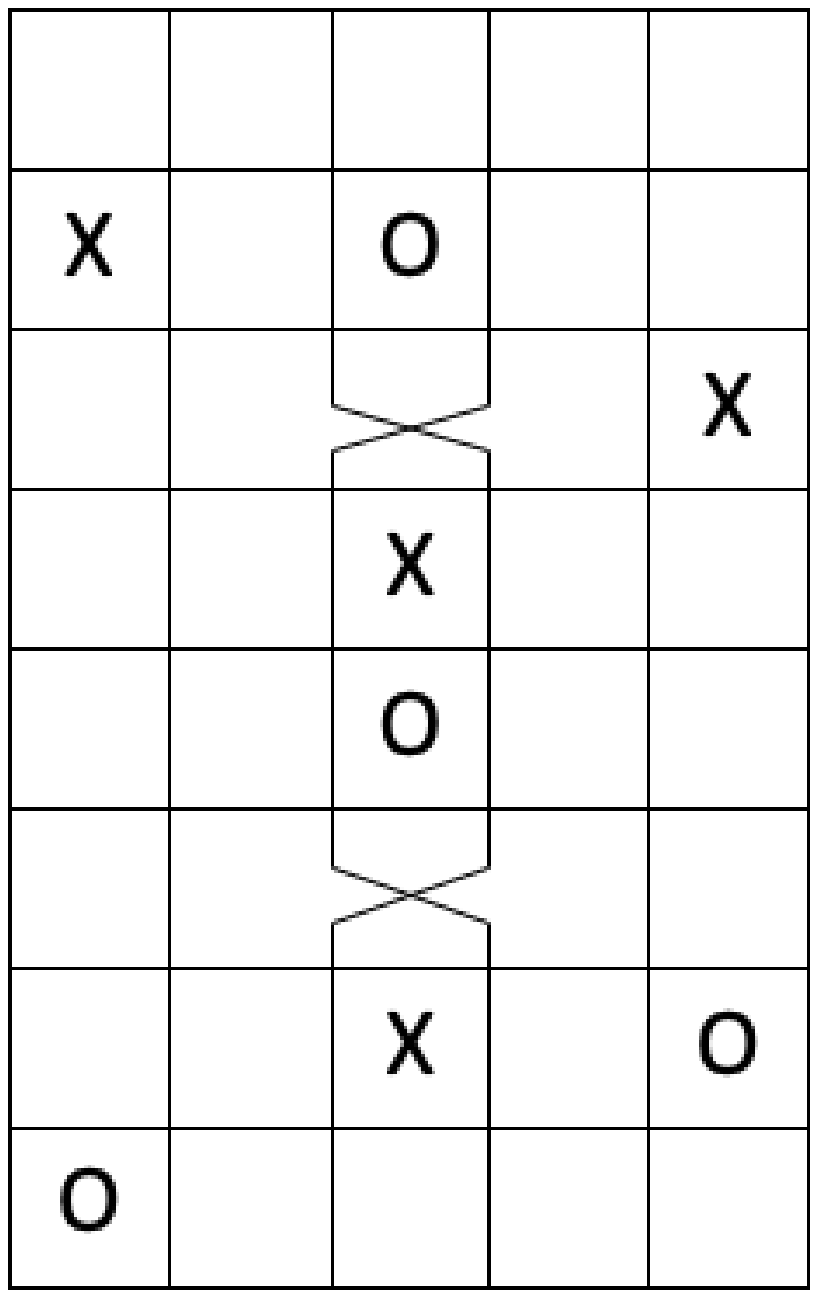}

We call $\beta_i$ the curved circle belonging to $\G$ and $\gamma_i$ the curved circle belonging to $\G'.$ Let $a$ and $b$ be the two intersections of $\beta_i$ and $\gamma_i,$ with $a$ at the southern end of the bigon containing $\beta_i$ as its western boundary.

\begin{remark}\label{bigon}
Very importantly, we may always assume that each bigon contains at least one $X$-marking in it. We will be counting regions that are forbidden from intersecting $X$-markings, hence this assumption will markedly simplify our below analysis.
\end{remark}

\begin{definition}
(Modified from [OSS] Definition 5.1.1). A \emph{pentagon} from $\x\in\mathbf{S}(\G)$ to $\y'\in\mathbf{S}(\G')$ is an embedded disk $p$ in the torus whose boundary is the union of five arcs, each of which lies on an $\alpha_j, \beta_j,$ or $\gamma_i$ curve, such that: (1) four of the corners of $p$ are in $\x\cup\y',$ (2) at each corner $x$ of $p,$ exactly one of the four quadrants of a small disk surrounding $x$ has multiplicity 1 and the other 3 have multiplicity 0, and (3) $$\partial(\partial p\cap(\alpha_1\cup\dots\cup\alpha_n)) = \y' - \x.$$

Let $P$ be an embedded disk in the universal cover of the torus satisfying conditions (1), (2), and (3), satisfying two extra conditions: (4) that $P$ has width one, and (5), that the projection of $P$ onto the torus, which we call $p$, has multiplicity $2$ in at least one point, and multiplicity 1 in at least one point. Then, we call $p$ a \emph{long pentagon} from $\x$ to $\y'.$

Let $\Pent(\x, \y')$ denote the set of pentagons from $\x$ to $\y',$ and $\Pent^*(\x, \y')$ denote the set of pentagons and long pentagons from $\x$ to $\y'.$
\end{definition}

\begin{definition}
(Modified from [OSS] Definition 5.1.5). A \emph{hexagon} from $\x\in\mathbf{S}(\G)$ to $\y\in\mathbf{S}(\G)$ is an embedded disk $h$ in the torus whose boundary is the union of six arcs, each of which lies on an $\alpha_j, \beta_j,$ or $\gamma_i$ curve, such that: (1) four of the corners of $h$ are in $\x\cup\y',$ and the other two corners are at $a$ and $b$, (2) at each corner $x$ of $h,$ exactly one of the four quadrants of a small disk surrounding $x$ has multiplicity 1 and the other 3 have multiplicity 0, and (3) $$\partial(\partial h\cap(\alpha_1\cup\dots\cup\alpha_n)) = \y - \x.$$

Let $H$ be an embedded disk in the universal cover of the torus satisfying conditions (1), (2), and (3), satisfying two extra conditions: (4) that $P$ has width one, and (5), that the projection of $H$ onto the torus, which we call $h$, has multiplicity $2$ in at least one point, and multiplicity 1 in at least one point. Then, we call $h$ a \emph{long pentagon} from $\x$ to $\y.$

Let $\Hex(\x, \y)$ denote the set of pentagons from $\x$ to $\y,$ and $\Hex^*(\x, \y)$ denote the set of pentagons and long pentagons from $\x$ to $\y.$
\end{definition}

%CLARIFY CONDITIONS 1,2,3 FOR UNIVERSAL COVER

\begin{definition}
We define the function $\mathcal{T}:\Pent^*(\x, \y')\rightarrow\mathbb{Z}_{\geq0}$ as follows. Let $p\in\Pent^*(\x, \y).$  If $p$ is long, then $\mathcal{T}(p)=1,$ and if $p$ is not long, then $\mathcal{T}(p) = |\Int(p)\cap\x|.$ Similarly, we define the function $\mathcal{T}:\Hex^*(\x, \y')\rightarrow\mathbb{Z}_{\geq0}$ as follows. Let $h\in\Hex^*(\x, \y).$  If $h$ is long, then $\mathcal{T}(h)=1,$ and if $h$ is not long, then $\mathcal{T}(h) = |\Int(h)\cap\x|.$
\end{definition}

\begin{definition}
Let $\x, \z\in\mathbf{S}(\G)\cup\mathbf{S}(\G'),$ and suppose $\psi\in\pi(\x, \z)$ is a domain, with decomposition $\psi = r_1*r_2$ for $r_1\in\Rect^*(\x,\y)\cup\Pent^*(\x,\y)\cup\Hex^*(\x,\y)$ and $r_2\in\Rect^*(\y,\z)\cup\Pent^*(\y,\z)\cup\Hex^*(\y,\z).$ The \emph{degree} of the decomposition, which we will notate as $\deg(r_1, r_2),$ is similarly defined as the sum: $$\deg(r_1, r_2) = \mathcal{T}(r_1)+\mathcal{T}(r_2).$$
\end{definition}

\section{Rectangle Decomposition Lemmas}

This section contains many useful combinatorial lemmas that will expedite the proofs of the later theorems tremendously.

We will set some consistent notation throughout this section. Fix a grid diagram $\G.$ Let $\x, \z\in\mathbf{S}(\G)$ and let $\psi\in\pi(\x, \z)$ be a fixed domain.

Observe that if $\psi$ admits at least one decomposition $\psi = r_1*r_2$ where $r_1$ and $r_2$ are either rectangles or long rectangles, then we must have $|\x - \x\cap \z| = 0,$ 3, or 4, simply because the initial and final grid states of each rectangle differ by exactly 2 points. We codify this useful fact in the below lemma:

\begin{lemma}
Suppose that there exists $\y\in\mathbf{S}(\G)$ such that $\psi$ admits at least one decomposition $\psi = r_1*r_2$ where $r_1\in\Rect^*(\x, \y)$ and $r_2\in\Rect^*(\y, \z).$ Then, $|\x - \x\cap \z| = 0, 3,$ or $4.$
\end{lemma}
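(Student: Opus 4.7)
The plan is to compute $|\x - \x\cap\z|$ directly from the overlap data of the corners of $r_1$ and $r_2$, then rule out the forbidden values by a geometric case analysis. Since every (long) rectangle in $\Rect^*$ moves exactly two points, the sets $\x\setminus\y$, $\y\setminus\x$, $\y\setminus\z$, $\z\setminus\y$ all have size two and consist, respectively, of the $\x$-corners of $r_1$, the $\y$-corners of $r_1$, the $\y$-corners of $r_2$, and the $\z$-corners of $r_2$. Let $j := |(\y\setminus\x)\cap(\y\setminus\z)|$ count the $\y$-corners shared by $r_1$ and $r_2$, and $k := |(\x\setminus\y)\cap(\z\setminus\y)|$ count the $\x$-corners of $r_1$ that are also $\z$-corners of $r_2$; the other potential overlaps vanish, since one side lies in $\y$ while the other does not. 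A routine inclusion-exclusion then yields $|\x-\x\cap\z| = 4 - (j+k)$, so it suffices to show $j+k \in \{0,1,4\}$.

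The value $j+k = 3$ is ruled out immediately, since any two distinct grid states differ in at least two elements: if they shared $n-1$ points, both would be forced to agree on the unique remaining row and column. The substantive case is $j+k = 2$, which splits into $(j,k) \in \{(2,0), (0,2), (1,1)\}$. In case $(2,0)$, the two shared $\y$-corners lie on two distinct $\alpha$-curves and two distinct $\beta$-curves, whose four intersection points are the only possible corners of either rectangle; hence $r_1$ and $r_2$ both have all four corners at these points, with the shared $\y$-pair occupying one diagonal. The $\x$-corners of $r_1$ and $\z$-corners of $r_2$ are then forced to be the two points on the opposite diagonal, giving $k = 2$, a contradiction. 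Case $(0,2)$ is symmetric. In case $(1,1)$, let $p$ be the shared $\y$-corner and let $q, q'$ be the other $\y$-corners of $r_1$ and $r_2$. Writing $\alpha_u$ (resp. $\beta_u$) for the unique $\alpha$-curve (resp. $\beta$-curve) through a point $u$, the $\x$-corners of $r_1$ are $\alpha_q \cap \beta_p$ and $\alpha_p \cap \beta_q$, while the $\z$-corners of $r_2$ are $\alpha_{q'} \cap \beta_p$ and $\alpha_p \cap \beta_{q'}$. Checking the four possible coincidences, each either repeats an impossibility like $\alpha_p = \alpha_q$ or $\beta_p = \beta_q$, or else forces $\alpha_q = \alpha_{q'}$ or $\beta_q = \beta_{q'}$; the latter alternative puts the distinct points $q$ and $q'$ of $\y$ on a common $\alpha$- or $\beta$-curve, contradicting the grid state property.

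The main subtlety is that for a long rectangle, the partition of its four bounding corners into $\y$-corners versus $\x$-corners (or $\z$-corners) can occupy either diagonal of the bounding box, depending on how the lift wraps around the torus --- unlike the fixed NW-SE versus SW-NE pattern for ordinary rectangles in a single fundamental domain. This does not cause trouble, because the arguments above only use the invariant fact that each rectangle's $\y$-corners lie on \emph{one} diagonal of its four-corner bounding box and its remaining corners on the \emph{other}, which holds uniformly for both ordinary and long rectangles.
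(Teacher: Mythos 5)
Your proof is correct, and it is essentially the careful version of the paper's argument: the paper states this lemma with no proof beyond the preceding sentence (``simply because the initial and final grid states of each rectangle differ by exactly 2 points''). The real content you supply is the exclusion of $|\x - \x\cap\z| = 2$, which the paper's one-liner does not literally address; your corner-overlap count $|\x-\x\cap\z|=4-(j+k)$ together with the case analysis for $(j,k)\in\{(2,0),(0,2),(1,1)\}$, and the observation that incoming and outgoing corners still occupy opposite diagonals of the bounding box for long rectangles, is exactly what is needed to make the paper's observation into a proof.
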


\begin{lemma}\label{rectangles4}
Let $|\x - \x\cap \z| = 4.$ Suppose that there exists $\y\in\mathbf{S}(\G)$ such that $\psi$ admits at least one decomposition $\psi = r_1*r_2$ where $r_1\in\Rect^*(\x, \y)$ and $r_2\in\Rect^*(\y, \z).$ Suppose that $r_1, r_2$ are not both long. Then, $\psi$ admits precisely two decompositions $\psi = r_1*r_2 = r_1'*r_2',$ such that there exists $\y'\in\mathbf{S}(\G)$ with $r_1\in\Rect^*(\x, \y')$ and $r_2'\in\Rect^*(\y', \z)$, and $r_1', r_2'$ are not both long. Moreover, these two decompositions have the same degree.
\end{lemma}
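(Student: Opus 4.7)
The plan is to use the cardinality hypothesis to force disjointness of the two rectangles' corner changes, construct the second decomposition as an explicit swap, and prove equality of degrees by telescoping a unified Maslov-style identity valid for both ordinary and long rectangles.

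First I would set corner notation: let $A_i$ and $B_i$ denote the pairs of incoming and outgoing corners of $r_i$. Since each rectangle contributes $|\x \triangle \y| = 4$ and $|\y \triangle \z| = 4$, while $|\x \triangle \z| = 8$ by hypothesis, the four sets $A_1, A_2, B_1, B_2$ must be pairwise disjoint, with $A_1 \sqcup A_2 = \x \setminus \z$ and $B_1 \sqcup B_2 = \z \setminus \x$. In particular $\y = (\x \setminus A_1) \cup B_1$, and the corners $A_2, B_2$ of $r_2$ also lie in $\x$ and $\z$ respectively.

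Next I would construct the swap. Set $\y' := (\x \setminus A_2) \cup B_2$; the disjointness ensures $\y'$ is a valid grid state, and that the geometric region underlying $r_2$, reinterpreted as going from $\x$ to $\y'$, is an element $r_1' \in \Rect^*(\x, \y')$, while the region underlying $r_1$, reinterpreted as going from $\y'$ to $\z$, is $r_2' \in \Rect^*(\y', \z)$. This gives the second decomposition $\psi = r_1' * r_2'$; since $r_1'$ and $r_2'$ share their geometric regions with $r_2$ and $r_1$ respectively, the set of long rectangles in $\{r_1', r_2'\}$ matches that in $\{r_1, r_2\}$, so $r_1', r_2'$ are also not both long. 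For uniqueness, any decomposition $\psi = \rho_1 * \rho_2$ is determined by the in-corner pair of $\rho_1$, which must be a diagonal pair of corners of an embedded (possibly long) rectangle chosen from the four points of $\x \setminus \z$; only the pairings $A_1$ and $A_2$ correspond to rectangles that sum to $\psi$, giving precisely these two decompositions.

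For the degree equality, I would establish the unified identity $\mathcal{T}(r) = \tfrac{1}{2}(M(\x) - M(\y) - 1) + |r \cap \mathbb{O}|$ for every $r \in \Rect^*(\x, \y)$. For ordinary $r$ this is a rewriting of Proposition 2.4. For long $r$ with $\mathcal{T}(r) = 1$, the identity reduces to $M(\x) - M(\y) = 3 - 2|r \cap \mathbb{O}|$, which I would verify by comparing $r$ to the short-way width-$1$ rectangle $g$ sharing the same four corners: Proposition 2.4 applied to $g$ gives $M(\x) - M(\y) = 1 - 2|g \cap \mathbb{O}|$ since $\x \cap \Int(g) = \emptyset$ (a width-one rectangle has no grid points in its interior), and a direct count shows $|r \cap \mathbb{O}| - |g \cap \mathbb{O}| = 1$ regardless of where the relevant column's $\mathbb{O}$-marking sits. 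Summing the unified identity across either decomposition telescopes to
$$\mathcal{T}(r_1) + \mathcal{T}(r_2) = \tfrac{1}{2}\bigl(M(\x) - M(\z)\bigr) - 1 + |\psi \cap \mathbb{O}|,$$
an expression depending only on $\psi$, so both decompositions have the same degree. The main obstacle I anticipate is confirming that the ghost rectangle $g$ shares the in/out-corner assignment of $r$ so that it genuinely connects $\x$ to $\y$, which requires careful bookkeeping of corner labels under the universal-cover projection.
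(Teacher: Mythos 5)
Your proof is correct, and while the combinatorial half mirrors the paper's argument, the degree computation takes a genuinely different and arguably cleaner route. On the combinatorial side you and the paper do the same thing: the four points of $\x - \x\cap\z$ force the corner sets of the two rectangles to be disjoint, the second decomposition is obtained by swapping which region is traversed first (so $r_1'$ has the support of $r_2$ and $r_2'$ that of $r_1$, which also preserves the not-both-long condition), and uniqueness is asserted at the same level of rigor as in the paper. Where you diverge is the degree equality. The paper compares $|\Int(r_1)\cap\x|+|\Int(r_2)\cap\y|$ with the swapped sum directly, via a case analysis on how many corners of one rectangle lie in the interior of the other ($0$, $1$, $2$, or $4$), evaluating both sums against $\psi\cap(\x\cap\z)$ in each case. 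You instead prove the single identity $\mathcal{T}(r)=\tfrac12(M(\x)-M(\y)-1)+|r\cap\mathbb{O}|$ uniformly for short and long rectangles --- for short ones it is a restatement of the relative Maslov grading formula together with $\mathcal{T}(r)=|\Int(r)\cap\x|$, and for long ones your comparison with the associated short rectangle (which the paper itself confirms lies in $\Rect^*(\x,\y)$ with the same corners, resolving the bookkeeping worry you flag) gives $M(\x)-M(\y)=3-2|r\cap\mathbb{O}|$, matching $\mathcal{T}(r)=1$ --- and then telescope, so that $\deg(r_1,r_2)=\tfrac12(M(\x)-M(\z))-1+|\psi\cap\mathbb{O}|$ manifestly depends only on $\psi$ and its endpoints. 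Your approach buys robustness: it avoids the geometric case enumeration entirely, applies verbatim to the $|\x-\x\cap\z|=3$ situation of the next lemma, and makes transparent why the degree is an invariant of the domain rather than of the decomposition. The paper's case analysis, by contrast, yields slightly more local information (the explicit value of each $|\Int(r_i)\cap\cdot|$ in each configuration), which it reuses informally in later pentagon and hexagon arguments.
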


\begin{proof}
Lift the decomposition $\psi = r_1*r_2$ into the universal cover so that $r_1$ and $r_2$ are represented by connected polygons. Because the grid states contain precisely one point in each horizontal and vertical circle, we must have that the circles containing the edges of $r_1$ and $r_2$ are all different. Hence, it is clear that the only possible corners of $\x$ that can be the outgoing corners of any decomposition $\psi = r_1'*r_2'$ are the outgoing corners of $r_1$ are the outgoing corners of $r_2$. Hence, there are clearly precisely 2 decompositions of $\psi$ as a composite of two polygons $\psi = r_1*r_2 = r_1'*r_2',$ with $r_1\in\Rect^*(\x, \y)$ and $r_1'^*\in\Rect(\x, \y').$ Furthermore, $r_1$ and $r_2'$ share the same support, as do $r_1'$ and $r_2.$ Hence, the $\mathcal{T}$ terms of the degrees of both decompositions agree. Furthermore, $r_1'$ and $r_2'$ are clearly not both long.

Let $k = \deg(r_1, r_2).$ We wish to show $\deg(r_1', r_2') = k.$ For $i=1,2,$ let $C(r_i)$ be the number of corners of $r_{3-i}$ discounting points of $\beta_i\cap\gamma_i$ intersecting $\Int(r_i)$; suppose without loss of generality that $C(r_1)\geq C(r_2).$ Then, there are four cases here: the first is that $C(r_1)=0.$ In this case, clearly $$|\Int(r_1')\cap \x|+|\Int(r_2')\cap \y'| = |\Int(r_1)\cap \x|+|\Int(r_2')\cap \y|$$ since $\Int(\psi)\cap\x = \Int(\psi)\cap\y = \Int(\psi)\cap\y'$ (recall that $\x, \y$ and $\x, \y'$ are only different in 2 places.) The remaining cases have $C(r_1)>0,$ so $r_1$ is not thin, therefore not long. The second case is that $C(r_1)=1$; in this case $$|\Int(r_1')\cap \x|+|\Int(r_2')\cap \y'| = |\Int(r_1)\cap \x|+|\Int(r_2')\cap \y| = 1 + \psi\cap(\x\cap\z)$$ where $\psi\cap(\x\cap\z)$ is counted with multiplicity. The third case is that $\Int(r_1)$ contains exactly 2 corners of $r_2$; again in this case $$|\Int(r_1')\cap \x|+|\Int(r_2')\cap \y'| = |\Int(r_1)\cap \x|+|\Int(r_2')\cap \y| = 1 + \psi\cap(\x\cap\z)$$ where $\psi\cap(\x\cap\z)$ is counted with multiplicity. Finally, we could have that $\Int(r_1)$ contains all 4 corners of $r_2$; in this case $$|\Int(r_1')\cap \x|+|\Int(r_2')\cap \y'| = |\Int(r_1)\cap \x|+|\Int(r_2')\cap \y| = 2 + \psi\cap(\x\cap\z).$$ That suffices for the proof.
\end{proof}

\begin{lemma}\label{rectangles3}
Let $|\x - \x\cap \z| = 3.$ Suppose that in the support of $\psi,$ an entire row or column may not have multiplicity $\geq2,$ and that at most one entire row or at most one entire column may have multiplicity 1. (These conditions are achieved, for instance, when only 1 of the two rectangles in the decomposition is allowed to be long.)

Suppose that there exists $\y\in\mathbf{S}(\G)$ such that $\psi$ admits at least one decomposition $\psi = r_1*r_2$ where $r_1\in\Rect^*(\x, \y)$ and $r_2\in\Rect^*(\y, \z).$ Suppose that $r_1, r_2$ are not both long. Then, $\psi$ admits precisely two decompositions $\psi = r_1*r_2 = r_1'*r_2',$ such that there exists $\y'\in\mathbf{S}(\G)$ with $r_1\in\Rect^*(\x, \y')$ and $r_2'\in\Rect^*(\y', \z)$, and $r_1', r_2'$ are not both long. Moreover, these two decompositions have the same degree.
\end{lemma}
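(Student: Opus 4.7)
The plan is to parallel Lemma \ref{rectangles4}, adapting its argument to the regime $|\x - \x\cap\z| = 3$. This condition forces $r_1$ and $r_2$ to share exactly one corner at some vertex $v \in \y \setminus (\x\cup\z)$, with their interiors occupying diagonally opposite quadrants around $v$ (since $v$ is an outgoing corner of $r_1$ and an incoming corner of $r_2$, the ``convex'' quadrants of $r_1$ and $r_2$ at $v$ lie on opposite diagonals). I would begin by lifting the decomposition $\psi = r_1*r_2$ to the universal cover, obtaining polygons $R_1, R_2$ meeting at a lift $\tilde v$. The hypothesis that no entire row or column of $\psi$ has multiplicity $\geq 2$, together with the at-most-one-row-or-column-of-multiplicity-$1$ hypothesis, restricts $R_1 \cup R_2$ to a short list of shapes --- generically an L-shape, a T-shape, or a rectangle subdivided by a perpendicular cut through $\tilde v$ --- and when one of $r_1, r_2$ is long, the corresponding arm becomes an almost-full wraparound strip.

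The alternative decomposition $\psi = r_1' * r_2'$ arises from performing the opposite cut through $v$: splitting the composite shape along the perpendicular axis at $v$ and designating the opposite diagonal pair at $v$ as the corners of the new intermediate state $\y'$. This produces $r_1' \in \Rect^*(\x, \y')$ and $r_2' \in \Rect^*(\y', \z)$ with $\y'$ differing from $\x$ in a different pair of points. Uniqueness of this alternative follows from the circle-support reasoning of Lemma \ref{rectangles4}: the horizontal and vertical circles carrying the edges of any valid two-rectangle decomposition are determined by $\partial\psi$ together with the incoming/outgoing corners in $\x$ and $\z$, leaving only one alternative partition. Moreover, the at-most-one-row-or-column-of-multiplicity-$1$ hypothesis rules out both $r_1', r_2'$ being long, since two long pieces would contribute at least two full rows or two full columns of multiplicity $1$ to $\psi$.

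Finally, I would verify $\deg(r_1, r_2) = \deg(r_1', r_2')$ by the same case analysis as Lemma \ref{rectangles4}, broken up by the number of corners of $r_{3-i}$ lying in $\Int(r_i)$ (now also accounting for the shared corner $v$, which contributes predictably to each count). In the non-long subcase, this reduces to checking that $|\Int(r_1)\cap \x| + |\Int(r_2)\cap \y| = |\Int(r_1')\cap \x| + |\Int(r_2')\cap \y'|$ by comparing both sides to $\psi \cap (\x\cap\z)$ with the appropriate corner corrections; in the subcase with exactly one long piece in each decomposition, the $\mathcal{T} = 1$ convention on the long piece absorbs any discrepancy between interior-intersection counts in a way that is symmetric across the two decompositions.

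The main obstacle I anticipate is the long-rectangle subcase at the shared corner. The wraparound overlap of the long piece can either contain $v$ or be disjoint from it, and the alternative cut must produce an equally long piece on the ``correct'' arm of the L-shape for the degree calculation to balance. Ensuring that the alternative decomposition always inherits exactly one long piece, and that the $\mathcal{T}$-contributions from the long and non-long pieces match across the two decompositions, requires careful geometric bookkeeping of how the long strip's endpoints sit relative to $v$ and to the non-long piece's corners --- and this bookkeeping is where the at-most-one-row-or-column hypothesis plays its essential role.
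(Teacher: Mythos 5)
There is a genuine error in the geometric setup. At a point $v$ that is an \emph{outgoing} corner of $r_1$ and an \emph{incoming} corner of $r_2$, the two rectangles cannot occupy diagonally opposite quadrants: with the standard orientation convention, the outgoing corners of a rectangle are one diagonal pair (say SE/NW) and the incoming corners are the other (SW/NE), so the quadrant filled by $r_1$ at $v$ and the quadrant filled by $r_2$ at $v$ are always \emph{adjacent}, forming a straight angle. (Diagonally opposite quadrants occur when both corners are outgoing, i.e.\ for two rectangles leaving the same state --- not for a juxtaposition.) Consequently $r_1\cap r_2$ is an entire edge $e$, $v$ is a convex $90$-degree corner of the lifted region $Q$, and the reflex $270$-degree corner of $Q$ sits at the \emph{other} endpoint $d$ of $e$. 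The two decompositions are the two ways of resolving $Q$ at $d$; they have \emph{different} shared corners, so your recipe of ``performing the opposite cut through $v$'' does not produce the second decomposition (in a concrete L-shape the perpendicular line through $v$ is not even a valid cut). This misidentification also undermines your uniqueness argument, which leans on the circle-support reasoning of Lemma \ref{rectangles4}; here the correct mechanism is that any decomposition must eliminate the $270$-degree angle at $d$, and there are exactly two cuts that do so.

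The second gap is in the degree comparison, which is the real content of the lemma. You assert that the alternative decomposition ``always inherits exactly one long piece'' and that the $\mathcal{T}=1$ convention ``absorbs any discrepancy\dots symmetrically,'' but neither is true as stated: when $Q$ does not embed in a fundamental domain, one decomposition is (short, long) while the other may consist of two short rectangles. In that situation the all-short decomposition picks up exactly one extra point of the intermediate grid state in the interior of its first rectangle (a corner of its second rectangle lying on the boundary of both pieces of the other decomposition), and this extra count of $1$ is precisely offset by $\mathcal{T}(t)=1$ for the long rectangle $t$ in the other decomposition. Identifying and verifying this cancellation --- rather than appealing to symmetry --- is the step your proposal is missing; the case where $Q$ embeds in a fundamental domain is comparatively easy since then $\x\cap\Int(Q)=\y\cap\Int(Q)=\y'\cap\Int(Q)$ and the interior counts agree outright.
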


\begin{proof}
Consider a lift of $\psi$ to the universal cover of the torus such that $\psi$ is represented by a connected L-shaped polygon $Q$. Then, $\psi = r_1*r_2,$ where $r_1$ and $r_2$ are represented by rectangles in the universal cover with disjoint interiors (which may not be disjoint when we project back down to the torus.)

Since $|\x - (\x\cap\z)| = 3$, the two rectangles $r_1$ and $r_2$ must share a corner $c.$ Since this corner must be incoming for $r_1$ and outgoing for $r_2,$ then the two rectangles must create a 180-degree angle at this corner, and hence their intersection $r_1\cap r_2$ is an edge $e.$ The boundary $\partial e$ is thus two points, $c$ and another point, which we shall call $d.$ Clearly, there exists a 270-degree angle at $d$. In any decomposition of $\psi,$ there cannot be a 270-degree angle. Since there are precisely two ways to cut $Q$ at this angle, and each one uniquely specifies a decomposition, then we get $\psi$ has precisely two decompositions $\psi = r_1*r_2 = r_1'*r_2',$ and $r_1', r_2'$ are not both long by the conditions on the support of $\psi.$

We must show these two decompositions have the same degree. First, note that any point of $\x$, $\y$, or $\y'$ inside of $\Int(Q)$ must not lie on $\partial(r_1)\cup\partial(r_2),$ since this would contradict the fact that grid states contain only 1 point on each horizontal or vertical circle. If $Q$ can embed into a fundamental domain of the torus, then $\mathcal{T}=0$ for all rectangles in all decompositions, and the local multiplicities are $\leq1.$ Thus, $\x\cap\Int(Q)=\y\cap\Int(Q)=\y'\cap\Int(Q),$ so the degrees are the same.

Suppose $Q$ cannot embed into a fundamental domain. Then, by the multiplicity constraints, we must have that one of the two decompositions involves a long rectangle $t\in\Long(\x, \y)$ or $\Long(\y, \z)$ for some grid state $\y,$ and some other rectangle $r\in\Rect(\x, \y)$ or $\Rect(\y, \z)$ such that $r\cup t$ embeds in a fundamental domain if we delete the annulus contained by $t.$ Suppose the intermediate stage in the other decomposition, $r_1'*r_2',$ is $\y'.$ If one of $r_1'$ or $r_2'$ is long, then clearly no corner of one can lie in the interior of the other, hence the degrees are clearly the same. 

Otherwise, $r_1', r_2'\in\Rect(\x,\y')\cup\Rect(\y', \z).$ We see geometrically that $\Int(r_1')\cap\x$ must contain precisely one point $c$ of $\x$ (a corner of $r_2'$) that is not contained in $\Int(r)$ or $\Int(t),$ since it lies on the edges of both such rectangles. Since $t$, and hence $r_2',$ is thin, all points of $\Int(Q)\cap\x-c = \Int(Q)\cap\y=\Int(Q)\cap\y'$ must lie in $\Int(r)$ and $\Int(r_1).$ The degree contribution of the point $c$ in the decomposition $r_1'*r_2'$ is exactly canceled out by the contribution of $\mathcal{T}(t)$ in the decomposition $r*t.$ That suffices for the proof.
\end{proof}

\begin{lemma}\label{rectangles0}
Let $|\x - \x\cap \z| = 0$ (so $\x=\z.$) Suppose that in the support of $\psi,$ an entire row or column may not have multiplicity $\geq2,$ and that at most one entire row or at most one entire column may have multiplicity 1.

Suppose that there exists $\y\in\mathbf{S}(\G)$ such that $\psi$ admits at least one decomposition $\psi = r_1*r_2$ where $r_1\in\Rect^*(\x, \y)$ and $r_2\in\Rect^*(\y, \x).$ Then, this decomposition is unique and $\psi$ is an annulus (either horizontal or vertical) of width 1 (such that the multiplicity of $\psi$ in each square is $\leq1.$)
\end{lemma}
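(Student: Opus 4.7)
The plan is to first classify $\psi$ as a periodic domain using the structure of $\pi(\x, \x)$, then apply the multiplicity hypotheses to show $\psi$ is a single width-one row or column annulus, and finally verify uniqueness by enumerating the rectangles compatible with the forced corners.

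Since $\x = \z$, $\psi$ lies in $\pi(\x, \x)$. I would first argue that any periodic domain has multiplicity at square $(j, i)$ of the form $a_i + b_j$ for integers $a_i, b_j$: the condition $\partial(\partial\psi \cap \alpha_i) = 0$ on each $\alpha_i$ forces $\psi(j, i) - \psi(j, i-1)$ to be independent of $j$. Equivalently, $\psi = \sum_i a_i R_i + \sum_j b_j C_j$ where $R_i, C_j$ denote the row and column annuli, subject to the relation $\sum R_i = \sum C_j$ (both equal the whole torus). Using this relation, together with $\psi \geq 0$ (since $r_1, r_2 \geq 0$), I may normalize so that $a_i, b_j \geq 0$.

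The hypothesis that no entire row or column has multiplicity $\geq 2$ forces $a_i, b_j \in \{0, 1\}$, since $a_i \geq 2$ would make every square in row $i$ have multiplicity $a_i + b_j \geq 2$. The hypothesis that at most one entire row or column has multiplicity $1$ I read as saying at most one row or column of the grid is fully covered by $\psi$, which forces exactly one nonzero index among all the $a_i$ and $b_j$ combined. Combined with $\psi \neq 0$, we conclude $\psi$ is either a single row annulus $R_{i_0}$ or a single column annulus $C_{j_0}$, with multiplicity $1$ in each square of its support.

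For uniqueness, assume without loss of generality $\psi = R_{i_0}$. Since $\psi$ has multiplicity at most $1$ everywhere, no long rectangle can appear in the decomposition (long rectangles have multiplicity $2$ somewhere by definition). So $r_1, r_2$ are both short rectangles with support inside row $i_0$, and their corners lie on $\alpha_{i_0} \cup \alpha_{i_0+1}$. Letting $a$ and $b$ be the $\beta$-coordinates of the unique $\x$-points on $\alpha_{i_0}$ and $\alpha_{i_0+1}$ (necessarily distinct, else $\x$ would have two points on the $\beta_a$-circle), the four corners of both $r_1$ and $r_2$ are forced to be $(a, i_0), (b, i_0), (a, i_0+1), (b, i_0+1)$. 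The only two short rectangles with these corners lying inside row $i_0$ are $[a, b] \times [i_0, i_0+1]$ and $[b, a+n] \times [i_0, i_0+1]$, and a direct orientation check shows one connects $\x$ to $\y$ while the other connects $\y$ to $\x$, pinning down the decomposition uniquely.

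The main obstacle will be the precise reading of the multiplicity hypotheses, since they must be strong enough to rule out ``cross''-shaped periodic domains of the form $R_{i_0} + C_{j_0}$: such a $\psi$ has no entire row or column of multiplicity $\geq 2$, and it actually admits decompositions using one short rectangle together with one long rectangle, so it would be a genuine counterexample if allowed. Once $\psi$ is pinned down as a width-one annulus with multiplicity $\leq 1$, the remaining steps---excluding long rectangles and tracking short-rectangle orientations---are routine.
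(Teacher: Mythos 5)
Your argument is correct, but it reaches the conclusion by a different route than the paper. The paper works directly with the given decomposition: it lifts $r_1*r_2$ to the universal cover, observes that since $|\x-\x\cap\y|=2$ the two lifted rectangles must share an edge, so their union $Q$ is a single rectangle whose corners would all have to lie in $\x$; the one-point-per-circle condition then forces $Q$ to close up into an annulus, and the multiplicity hypotheses force it to be thin of multiplicity one. You instead classify $\psi$ first, using only that $\psi\in\pi(\x,\x)$ is effective: the periodic-domain structure $\psi(j,i)=a_i+b_j$, i.e.\ $\psi=\sum a_iR_i+\sum b_jC_j$ after normalizing $a_i,b_j\geq 0$, reduces the multiplicity hypotheses to linear constraints on the $a_i,b_j$, and only afterwards do you invoke the existence of a decomposition (to get $\psi\neq 0$ and to run the uniqueness count). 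Your route buys a cleaner separation of concerns and a complete classification of the admissible $\psi$ independent of decomposability, at the cost of needing the (standard but unstated in this paper) periodic-domain normal form; the paper's route stays closer to the combinatorics of shared corners that drives the neighboring Lemmas \ref{rectangles4} and \ref{rectangles3}. Your flagged concern about the cross-shaped domain $R_{i_0}+C_{j_0}$ is well taken and is resolved the right way: that domain really does decompose as one long and one short rectangle for suitable $\x$, so the hypothesis must be read as ``at most one full row or column in total has multiplicity one'' for the lemma to hold, which is also the reading the paper's proof implicitly uses when it says the annulus ``must have multiplicity 1 and width one by the conditions on the support.'' Your uniqueness step (excluding long rectangles via the multiplicity bound, then pinning down the two complementary short rectangles by their forced corners and orientations) matches the paper's in substance and is if anything spelled out more carefully.
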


\begin{proof}
Lift $\psi$ to a connected polygon $Q$ in the universal cover, such that $r_1$ and $r_2$ are represented by rectangles $R_1, R_2$ in the universal cover projecting onto $r_1, r_2$ in the torus. Then, the condition that each grid state must only contain one point in each row or column forces $R_1$ and $R_2$ to share an edge. Hence, $Q$ is a rectangle; since all the corners of $Q$ must be members of $\x,$ the condition that each grid state must only contain one point in each row or column forces $Q$ to be an annulus. It must have multiplicity 1 and have width one by the conditions on the support and multiplicities of $\psi.$

Furthermore, such annulus has a unique decomposition since the first rectangle $r_1$ must have outgoing corners precisely $\x\cap\text{clo}(Q),$ which is two points; since $r_1$ is not long, this determines $r_1$ uniquely. This in turn determines $r_2$ uniquely.
\end{proof}

\subsection{Pentagon and Hexagon Decomposition Lemmas}

We now suppose that $\G$ and $\G'$ are two grid diagrams which have been superimposed as in the previous section. Let $\x\in \mathbf{S}(\G)$ and $\z\in \mathbf{S}(\G')$ or $\mathbf{S}(\G)$, and $\psi\in\pi(\x, \z).$

\begin{remark}
A very important warning is that for this entire subsection, we assume that our regions have empty intersection with $\X.$ By Remark \ref{bigon}, this means that our regions may never contain an entire bigon in their support. This is not strictly necessary for most of the proofs below, but speeds up the arguments nicely.
\end{remark}

\begin{definition}
The \emph{closest point map} $I:\mathbf{S}(\G)\rightarrow\mathbf{S}(\G')$ is defined by letting $I(\x)$ be the grid state in $\G'$ which matches $\x$ in all but one point: the point $\alpha_j\cap\beta_i\in\x$ is replaced by the point $\alpha_j\cap\gamma_i\in I(\x).$
\end{definition}

We record here the following useful lemma:

\begin{lemma}\label{pentagon_grading}
For a grid state $\x \in \mathbf{S}(\G),$ we have that $M(\x) - M(I(\x)) = -1 + 2|t\cap\mathbb{O}|,$ where $t$ is the unique triangular subset of one of the bigons bounded by $\beta_i$ and $\gamma_i$ with two corners in common with $(I(\x) - \x)\cup(\x - I(\x)).$
\end{lemma}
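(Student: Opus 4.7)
The plan is a direct coordinate computation of $M(\x) - M(I(\x))$ using the defining formula $M(\x) = \mathcal{J}(\x, \x) - 2\mathcal{J}(\x, \mathbb{O}) + \mathcal{J}(\mathbb{O}, \mathbb{O}) + 1$, working in the superimposed diagram where $\mathbb{O}$ is common to $\G$ and $\G'$. Writing $\x = \{p\} \cup S$ and $I(\x) = \{q\} \cup S$ for $p := \alpha_j \cap \beta_i$ and $q := \alpha_j \cap \gamma_i$ (both at height $y_0$), each $\mathcal{J}$-difference expands as a sum of predicate differences $\mathds{1}[p < s] - \mathds{1}[q < s]$ and their symmetric variants $\mathds{1}[s < p] - \mathds{1}[s < q]$ as $s$ ranges over $S$ or $\mathbb{O}$. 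Because $p$ and $q$ agree on $y$-coordinate, each such difference vanishes unless $s$'s $x$-coordinate lies strictly between those of $\beta_i$ and $\gamma_i$ at height $y_0$.

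I would first perturb $\gamma_i$ slightly off all integer $x$-coordinates so that no grid-state point lies in this vertical strip, killing the $\mathcal{J}(\x, \x) - \mathcal{J}(I(\x), I(\x))$ contribution and leaving only $\mathbb{O}$-contributions. Geometrically, the contributing $O$-markings are those lying in the portion of the strip corresponding to the relevant bigon; by the validity condition of the commutation or switch (the X-O segment in the column being swapped is forced into the $y$-range bounded by the bigon's endpoints), the unique $O$-marking of that column lies inside the bigon with $\beta_i$ as western boundary. The sign of its contribution ($\pm \tfrac{1}{2}$ to $\mathcal{J}(\x, \mathbb{O}) - \mathcal{J}(I(\x), \mathbb{O})$) is dictated by whether this $O$-marking lies above or below $\alpha_j$, so after multiplying by $-2$ one obtains $M(\x) - M(I(\x)) = \pm 1$.

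Matching this to the stated formula: take $t$ to be the triangular subregion of this bigon below $\alpha_j$, so that its third vertex is $a$; then $|t \cap \mathbb{O}| = 1$ exactly when the $O$-marking lies below $\alpha_j$ (accounting for the $+2$ swing in the formula), and the $-1$ constant is the baseline contribution when the $O$-marking is above. The main obstacle I expect is the careful sign bookkeeping — verifying that the perturbation genuinely eliminates every $S$-contribution even in the regime where $\gamma_i$ runs close to $\beta_{i+1}$, matching the asserted sign of the constant $-1$ to the paper's convention on which bigon and which corner ($a$ rather than $b$), and handling the switch case in parallel with the commutation case. The $\x$-independence of $M(\x) - M(I(\x))$ falls out cleanly once the $S$-contributions are shown to vanish, since every surviving term is determined only by $\alpha_j, \beta_i, \gamma_i$, and $\mathbb{O}$.
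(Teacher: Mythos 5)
The paper's own ``proof'' is only a citation to [OSS, Lemma 5.1.3], so an honest computation is welcome, and your overall strategy --- expand the $\mathcal{J}$-formula and isolate the terms affected by replacing $p=\alpha_j\cap\beta_i$ with $q=\alpha_j\cap\gamma_i$ --- is the right one. But there is a genuine gap at the first step. The gradings $M(\x)$ and $M(I(\x))$ are computed by the planar formula in the standard square realizations of $\G$ and of $\G'$ respectively, and in those realizations $\mathbb{O}$ is \emph{not} a common point set: the commutation swaps the horizontal positions of the two $O$-markings in the two distinguished columns. That swap flips the one relevant pair of $O$'s from ``diagonal'' to ``anti-diagonal'' position (or vice versa), so $\mathcal{J}(\mathbb{O},\mathbb{O})$ computed for $\G$ and for $\G'$ differ by exactly $\pm 1$; these terms do not cancel as you assert. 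If instead you fix the markings at their superimposed positions on the torus and move only the grid-state point from $p$ to $q$, you must justify that the $\mathcal{J}$-formula evaluated at these non-standard marking positions still computes $M_{\G'}$. This is true --- the $\pm1$ discrepancy in $\mathcal{J}(\mathbb{O},\mathbb{O})$ is compensated by a $\mp\tfrac12$ discrepancy in $\mathcal{J}(I(\x),\mathbb{O})$ after the factor of $-2$ --- but it is not automatic, since relative positions of markings genuinely change between the two realizations; it needs either an explicit tracking of the two relocated $O$-markings or an appeal to the axiomatic characterization of $M$ ([OSS, Proposition 4.3.1]). As written, your computation would report the wrong constant term.

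A secondary problem, which you partly flag yourself, is the identification of the bigon and the triangle. The relevant bigon is not always the one with $\beta_i$ as western boundary: it is whichever bigon the circle $\alpha_j$ actually crosses, i.e., the one whose vertical extent contains the height of $p$, and this varies with $\x$. Carrying your computation through, $M(\x)-M(I(\x))$ equals the number of $O$'s of that bigon on one side of $\alpha_j$ minus the number on the other side, where the distinguished side flips according to whether $\gamma_i$ lies east or west of $\beta_i$ there. In both cases the distinguished triangle is the one with third corner at $a$ (equivalently, the one that is a domain from $\x$ to $I(\x)$); it lies \emph{below} $\alpha_j$ for one bigon and \emph{above} it for the other. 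Your proposed resolution --- always take the triangle below $\alpha_j$ --- therefore returns the wrong sign for the grid states whose distinguished point sits in the other bigon, so this step needs to be redone rather than merely tidied.
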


\begin{proof}
This is demonstrated in the proof of [OSS] Lemma 5.1.3.
\end{proof}

\begin{definition}
Consider a (possibly long) pentagon or hexagon $\psi\in\pi(\x, \y)$, where $\x\in\mathbf{S}(\G)$ and $\y\in\mathbf{S}(\G)\cup\mathbf{S}(\G').$ By slight abuse of notation, let $I(\y)\in\mathbf{S}(\G)$ be $\y$ if $\y\in\mathbf{S}(\G)$ and $I(\y)$ if $\y\in\mathbf{S}(\G')$.

Then, the associated \emph{associated rectangular domain} $\Psi\in\Rect^*(\x,I(\y))$ of $\psi$ is the (possibly long) rectangle from $\x$ to $I(\y)$ whose multiplicities are identical to those of $\psi$ outside the bigons between $\beta_i$ and $\gamma_i.$

Denote by $R(p)$ the associated rectangular domain of a (possibly long) pentagon or hexagon $p.$
\end{definition}

Note that $\Psi$ is long if and only if $\psi$ is long, and that $\mathcal{T}(\Psi)=\mathcal{T}(\psi).$

The following lemma is immediate from the above definition:

\begin{lemma}
Let $\psi \in \pi(\x, \z)$ and suppose that $\psi\cap\X=\emptyset.$ A decomposition $\psi = p_1*p_2,$ where $p_1, p_2$ are either rectangles, pentagons, or hexagons, and not both long, corresponds to a decomposition of the associated rectangular domain $\Psi$ into two rectangles, not both long. Furthermore, the decompositions of $\psi$ and $\Psi$ have the same degree.
\end{lemma}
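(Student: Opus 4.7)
The plan is to directly verify that the rectangular-domain map $R$ carries a decomposition of $\psi$ to a decomposition of $\Psi$, preserving the ``not both long'' condition and the degree.

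Given $\psi = p_1 * p_2$ with intermediate state $\y \in \mathbf{S}(\G) \cup \mathbf{S}(\G')$, I would form the associated rectangular domains $R(p_1) \in \Rect^*(\x, I(\y))$ and $R(p_2) \in \Rect^*(I(\y), I(\z))$ as in the definition preceding the lemma. These juxtapose to a domain $R(p_1) * R(p_2) \in \pi(\x, I(\z))$. Using the juxtaposition formula together with the defining property of $R$---that $R(p_i)$ agrees with $p_i$ outside the bigons bounded by $\beta_i$ and $\gamma_i$---I would obtain
\[
(R(p_1) * R(p_2))(q) = p_1(q) + p_2(q) = \psi(q) = \Psi(q)
\]
for every point $q$ outside the bigons. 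Both $R(p_1) * R(p_2)$ and $\Psi$ then lie in $\pi(\x, I(\z))$, so their difference is a $\mathbb{Z}$-linear combination of the horizontal and vertical annuli of $\G$. Since this difference is supported entirely inside the bigons, and the bigons do not contain any full annulus, it vanishes. Hence $R(p_1) * R(p_2) = \Psi$ as domains, giving the desired decomposition of $\Psi$.

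The transfer of the ``not both long'' condition is immediate from the observation noted just before the lemma that $R(p)$ is long if and only if $p$ is long. The degree equality follows from $\mathcal{T}(R(p)) = \mathcal{T}(p)$, which yields
\[
\deg(R(p_1), R(p_2)) = \mathcal{T}(R(p_1)) + \mathcal{T}(R(p_2)) = \mathcal{T}(p_1) + \mathcal{T}(p_2) = \deg(p_1, p_2).
\]
The bijective nature of the correspondence, if desired, can be confirmed by reversing the construction: given any decomposition $\Psi = \Psi_1 * \Psi_2$, the pentagons or hexagons $p_i$ satisfying $R(p_i) = \Psi_i$ are uniquely determined by the requirement $\psi = p_1 * p_2$, since the bigon triangles to be added are dictated by the geometry of $\psi$ and the intermediate state.

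The main obstacle I anticipate is the periodic-domain argument for agreement inside the bigons, but it is straightforward because the bigons occupy only a bounded subregion of the torus and cannot contain any entire $\alpha_j$- or $\beta_j$-annulus of $\G$.
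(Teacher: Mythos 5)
Your proposal is correct and supplies exactly the verification the paper leaves implicit (the paper declares this lemma ``immediate from the above definition''): you identify $R(p_1)*R(p_2)$ with $\Psi$ by matching multiplicities outside the bigons and killing the difference---a periodic domain of $\G$ supported in the bigons, hence zero since every square of $\G$ meets the complement of the bigons---and then transfer longness and degree via the paper's observations that $R(p)$ is long iff $p$ is long and $\mathcal{T}(R(p))=\mathcal{T}(p)$. One caution about your closing remark: the reverse correspondence is not always available---as the proof of Corollary \ref{pents} notes in the $|\x-\x\cap\z|=3$ case, a decomposition of $\Psi$ can fail to lift to one of $\psi$ when the cut passes through $a$ or $b$---but since the lemma only asserts the forward direction, this does not affect your proof.
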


\begin{corollary}\label{pents}
Let $\psi \in \pi(\x, \z)$ and suppose that $\psi\cap\X=\emptyset.$ Suppose $|\x - \x\cap\z| = 3$ or $4$. Suppose $\psi$ admits a decomposition $\psi = p_1*p_2,$ where $p_1, p_2$ are either rectangles, pentagons, or hexagons, and not both long. Then $\psi$ admits two decompositions as such, and both have the same degree. Furthermore, if $\psi = p*r$ or $r*p$ where $p$ is a long pentagon and $r$ is a (not long) rectangle, then the other decomposition of $\psi$ is also as a long pentagon and a not long rectangle.
\end{corollary}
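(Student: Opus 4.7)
The plan is to reduce to the purely rectangular case by passing to the associated rectangular domain, then invoke the rectangle-decomposition lemmas from the previous subsection. By the lemma immediately preceding this corollary, the given decomposition $\psi = p_1 * p_2$ transforms under the bijective correspondence into a decomposition $\Psi = R(p_1) * R(p_2)$ of the associated rectangular domain $\Psi$ into two rectangles not both long, with the same degree; the hypothesis $\psi \cap \X = \emptyset$ is precisely what allows this passage to $\Psi$.

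Next, I would apply the appropriate rectangle-decomposition lemma. If $|\x - \x \cap \z| = 4$, then Lemma \ref{rectangles4} directly gives that $\Psi$ admits precisely two such decompositions with equal degree. If $|\x - \x \cap \z| = 3$, then I would apply Lemma \ref{rectangles3}; its support constraints (no full row or column with multiplicity $\geq 2$, at most one with multiplicity $1$) are met because $R(p_1)$ and $R(p_2)$ are not both long. Translating back through the correspondence yields the two decompositions of $\psi$ asserted in the first claim.

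For the furthermore assertion (handling both $\psi = p * r$ and $\psi = r * p$): if $p$ is a long pentagon and $r$ is a not-long rectangle, then $\Psi$ decomposes as $R(p) * r$ (or $r * R(p)$) with $R(p)$ long. I would analyze the L-shaped lift $Q$ of $\Psi$ to the universal cover, as in the proof of Lemma \ref{rectangles3}: $Q$ consists of a width-$1$ strip (the lift of $R(p)$) wrapping once around the torus, attached at a single corner to the bounded rectangle (the lift of $r$). The width-$1$ structure of the long strip forces both cuts of $Q$ at its $270$-degree inner corner to produce one wrapping (long) piece and one bounded (not-long) piece. Moreover, since the bounded piece is too small to contain the bigon between $\beta_i$ and $\gamma_i$, the wrapping piece in the alternate decomposition must still cross that bigon; it therefore lifts under the correspondence to a long pentagon, while the bounded piece lifts to a not-long rectangle.

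The main obstacle is this final geometric step: verifying that both cuts of the width-$1$ L-shape $Q$ produce exactly one long and one not-long piece, and that the long piece in the alternate decomposition necessarily still crosses the bigon. This requires a careful case analysis depending on where $r$ attaches to the long strip and where the bigon sits relative to the L-shape; the other steps are essentially bookkeeping once the correspondence with $\Psi$ is in place.
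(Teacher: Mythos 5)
Your overall strategy --- pass to the associated rectangular domain $\Psi$, apply Lemma \ref{rectangles4} or \ref{rectangles3}, and translate back --- is the same as the paper's. However, there is a genuine gap in the case $|\x - \x\cap\z| = 3$: you assert that ``translating back through the correspondence yields the two decompositions of $\psi$,'' but the correspondence between decompositions of $\psi$ and decompositions of $\Psi$ is only guaranteed to be injective, not surjective. The intermediate cut of a rectangular decomposition of $\Psi$ can land exactly at one of the intersection points $a$ or $b$ of $\beta_i$ and $\gamma_i$; in that situation one of the two rectangular decompositions of $\Psi$ does \emph{not} lift to any decomposition of $\psi$ into pentagons, hexagons, and rectangles. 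The paper handles this by producing the second decomposition of $\psi$ directly: one re-cuts through the bigon, transferring a triangular portion of the bigon from the support of one piece to the other, which is always possible because (by Remark \ref{bigon} and the hypothesis $\psi\cap\X=\emptyset$) no piece can contain an entire bigon. Without this case your count of decompositions of $\psi$ can come out wrong, so this step cannot be dismissed as bookkeeping.

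On the ``furthermore'' assertion, the elaborate universal-cover analysis you flag as the main obstacle is unnecessary. A long rectangle covers an entire row or column annulus of the torus with positive multiplicity, and every row and column contains an $X$-marking, so a long rectangle in a decomposition of $\psi$ would force $\psi\cap\X\neq\emptyset$, contrary to hypothesis. Hence the long piece in the other decomposition cannot be a rectangle, and since exactly one piece of any decomposition of a domain ending in $\mathbf{S}(\G')$ is a pentagon, the other decomposition must again be a long pentagon together with a not-long rectangle. This one-line argument replaces your case analysis of where $r$ attaches to the strip and where the bigon sits.
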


\begin{proof}
If $|\x - \x\cap\z| = 4,$ then only one of $p_1$ or $p_2$ can have an edge on the curves $\beta_i$ or $\gamma_i.$ Furthermore, the associated rectangular domain $\Psi$ also connects two grid states differing by 4 points. Hence, Lemma \ref{rectangles4} tells us $\Psi$ admits two rectangular decompositions of the same degree. Since at most one of $p_1$ or $p_2$ can have an edge on the curves $\beta_i$ or $\gamma_i,$ clearly each of the two decompositions of $\Psi$ corresponds uniquely to a decomposition of $\psi.$

If $|\x - \x\cap\z| = 3,$ the associated rectangular domain $\Psi$ also connects two grid states differing by 3 points. Hence, Lemma \ref{rectangles4} tells us $\Psi$ admits two rectangular decompositions of the same degree. There are two cases: either, a point $a$ or $b$ at which $p_1$ or $p_2$ has a corner is on an edge shared by both rectangles in one of the decompositions of $\Psi$, or it is not. In the latter case, each of the two decompositions of $\Psi$ corresponds uniquely to a decomposition of $\psi.$ In the former case, one of the two decompositions of $\Psi$ does not correspond to a decomposition of $\psi,$ however we achieve precisely one more decomposition of $\psi$ by removing a portion of one of the bigons from the support of one of $p_1$ or $p_2$ and appending it to the other, which is clearly possible since the support of a pentagon or hexagon cannot contain an entire bigon.

The last claim in the above corollary follows simply because any long rectangle in a decomposition of $\psi$ would clearly have to intersect $\X$ (here is one instance where assuming $\psi\cap\X=\emptyset$ drastically simplifies our argument.)
\end{proof}

\section{Sign Assignments}

The goal of this section is to define double-point grid homology over the integers, and verify that this is indeed a knot invariant.

Fix a grid diagram $\G,$ and let $\Rect(\G) = \bigcup_{\x, \y\in\mathbf{S}(\G)} \Rect(\x, \y),$ and also $\Rect^*(\G) = \bigcup_{\x, \y\in\mathbf{S}(\G)} \Rect^*(\x, \y).$ 

\begin{definition}
(Modified from [OSS Definition 15.1.2]). A \emph{sign assignment} is a function $S:\Rect(\G)\rightarrow\{-1, 1\}$ satisfying three properties: (1) if there exists a domain $\psi$ such that $\psi = r_1*r_2 = r_1'*r_2'$ for $r_1, r_2, r_1', r_2'\in\Rect(\G),$ then $$S(r_1)S(r_2) = - S(r_1')S(r_2'),$$ (2) if $r_1*r_2$ is a horizontal annulus, then $S(r_1)S(r_2)=1,$ and (3) if $r_1*r_2$ is a vertical annulus, then $S(r_1)S(r_2)=-1.$

An \emph{extended sign assignment} is a function $S:\Rect^*(\G)\rightarrow\{-1, 1\}$ satisfying properties (2) and (3) above, and also (1'): if there exists a domain $\psi$ such that $\psi = r_1*r_2 = r_1'*r_2'$ for $r_1, r_2, r_1', r_2'\in\Rect^*(\G),$ then $$S(r_1)S(r_2) = - S(r_1')S(r_2').$$
\end{definition}

\begin{definition}
For $\psi\in\pi(\x, \z),$ let $D(\psi)$ be the set of decompositions of $\psi$ into two rectangles, and $D^*(\psi)$ the set of decompositions of $\psi$ into one rectangle and one long rectangle (in either order). For $d=r_1*r_2\in D(\psi)\cup D^*(\psi),$ define the sign $S(d)$ to be $S(r_1)*S(r_2).$
\end{definition}

\begin{remark}
Note that if $r_1$ and $r_2$ are two rectangles with $r_1\in\Rect(\x, \y)$ and $r_2\in\Rect(\z, \w),$ it is possible that $r_1$ and $r_2$ have the same support but $S(r_1)\neq S(r_2).$ It is therefore important to emphasize that $S$ is a function of the domain, initial, and final grid states.
\end{remark}

Note further that by the definition of our sign assignments, the proofs of lemmas \ref{rectangles4} and \ref{rectangles3} extend to prove the following slightly stronger statement:

\begin{lemma}\label{signs}
Under the assumptions of lemma \ref{rectangles4} or \ref{rectangles3}, the two decompositions of $\psi$ resulting from those lemmas have opposite signs.
\end{lemma}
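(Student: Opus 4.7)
The plan is to derive this lemma as an essentially direct consequence of property (1') in the definition of an extended sign assignment. Lemmas \ref{rectangles4} and \ref{rectangles3} have already done the combinatorial work: they establish that under their respective hypotheses, $\psi$ admits exactly two decompositions $\psi = r_1*r_2 = r_1'*r_2'$ into pairs of rectangles in $\Rect^*(\G)$ satisfying the ``not both long'' condition, with the two decompositions corresponding to genuinely distinct intermediate grid states $\y \neq \y'$ (and with all four rectangles in $\Rect^*(\G)$). Since property (1') of an extended sign assignment asserts exactly that any two decompositions of a common domain $\psi$ into rectangles in $\Rect^*(\G)$ must satisfy $S(r_1)S(r_2) = -S(r_1')S(r_2')$, the desired sign relation is immediate from plugging the two decompositions from Lemmas \ref{rectangles4} and \ref{rectangles3} into (1').

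In this light, the proof should consist of a short verification that the hypotheses of (1') are indeed met, followed by a one-line invocation of (1'). The phrase ``the proofs of lemmas \ref{rectangles4} and \ref{rectangles3} extend'' in the preceding paragraph should be read as: the structural analysis already performed in those proofs (identifying exactly which two decompositions occur) is what feeds into (1'); no new combinatorial work is required.

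The one place I would expect a bit of care is the subcase in the proof of Lemma \ref{rectangles3} where the lifted polygon $Q$ does not embed in a fundamental domain, so that one of the two decompositions necessarily involves a long rectangle $t$ together with an ordinary rectangle $r$. Here I would want to double-check that both decompositions still lie in $\Rect^*(\G)$ with at most one long rectangle per decomposition --- which is guaranteed by the ``not both long'' hypothesis --- so that property (1') applies. Beyond this bookkeeping, I do not anticipate a real obstacle: the lemma is essentially the restatement of the defining property of extended sign assignments, packaged together with the enumeration of decompositions already provided by Lemmas \ref{rectangles4} and \ref{rectangles3}.
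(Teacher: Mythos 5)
Your proposal is correct and matches the paper's approach: the paper offers no separate argument for this lemma beyond the remark that it follows "by the definition of our sign assignments" from the enumeration of decompositions in Lemmas \ref{rectangles4} and \ref{rectangles3}, i.e.\ exactly your one-line invocation of properties (1) and (1'). Your extra care about the subcase of Lemma \ref{rectangles3} with a long rectangle is a reasonable bookkeeping check, and it resolves just as you say.
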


Given a sign assignment, we may define a new chain complex over the integers. Let $GCL(\G; \mathbb{Z})$ be the free $\mathbb{Z}[V_1,\dots, V_n, v]$-module generated by the grid states of $\G,$ with mutliplication by each $V_i$ homogeneous of bigrading $(-2, -1),$ and multiplication by $v$ homogeneous of bigrading $(2, 0).$ Define the differential as follows on grid states, extending by linearity: $$\partial_S\x= \sum_{\y\in\mathbf{S}(\G)}\sum_{\{r\in\Rect(\x, \y)\big| r\cap\X = \emptyset\}} S(r)v^{\mathcal{T}(r)}V_1^{O_1(r)}\dots V_n^{O_n(r)}\y.$$

\begin{proposition}\label{diff}
$\partial_{S}^2=0$ for any sign assignment $S.$
\end{proposition}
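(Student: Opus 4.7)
The plan is to expand $\partial_S^2 \x$ as a double sum over composable pairs $(r_1, r_2)$ of ordinary rectangles avoiding $\X$, and then regroup the terms by the underlying domain $\psi = r_1 * r_2 \in \pi(\x, \z)$. The $\mathbb{O}$-factor $V_1^{O_1(r_1)+O_1(r_2)} \cdots V_n^{O_n(r_1)+O_n(r_2)}$ equals $V_1^{O_1(\psi)} \cdots V_n^{O_n(\psi)}$ by the definition of juxtaposition, independently of the decomposition, and Lemmas \ref{rectangles4} and \ref{rectangles3} tell us that $\mathcal{T}(r_1) + \mathcal{T}(r_2)$ is the same for every rectangular decomposition of a fixed $\psi$. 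Hence the whole monomial $v^{\mathcal{T}(r_1)+\mathcal{T}(r_2)} \prod_i V_i^{O_i(r_1)+O_i(r_2)}$ factors out of the sum over decompositions of $\psi$, and it suffices to show that for every $\psi$ contributing to $\partial_S^2 \x$ the signed count
\begin{equation*}
\sum_{\substack{\psi = r_1 * r_2 \\ r_1, r_2 \in \Rect}} S(r_1) S(r_2)
\end{equation*}
vanishes.

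The argument then splits into a case analysis on $k := |\x - \x \cap \z|$, which the first lemma of this section forces to lie in $\{0, 3, 4\}$. For $k = 4$ I would invoke Lemma \ref{rectangles4} to produce exactly two decompositions of $\psi$, and Lemma \ref{signs} to conclude they carry opposite signs; since we have already observed that they have the same degree, they cancel. The case $k = 3$ is identical in spirit, using Lemmas \ref{rectangles3} and \ref{signs}. For $k = 0$ we must have $\x = \z$, and Lemma \ref{rectangles0} says $\psi$ is a width-one annulus; but any such annulus covers an entire row or column of $\G$ and so contains an $X$-marking, contradicting $\psi \cap \X = \emptyset$. Thus no $\psi$ contributes in this case, and in particular conditions (2) and (3) of the sign assignment (concerning horizontal and vertical annuli) are not invoked in the present proof.

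The main subtlety I anticipate concerns long rectangles: Lemmas \ref{rectangles4} and \ref{rectangles3} a priori permit the ``other'' decomposition of $\psi$ to involve a long rectangle, and such a decomposition is not counted by $\partial_S$, so if one ever actually arose the lone short-short decomposition would have nothing to cancel against. The key observation that rescues the argument is that any long rectangle has width-one annular support, covers a full row or column of $\G$, and therefore contains an $X$-marking; combined with the fact that $\psi \cap \X = \emptyset$ is automatic (since each $r_i$ appearing in $\partial_S$ avoids $\X$ and the multiplicities of $\psi$ dominate those of any decomposition piece), this excludes long rectangles from appearing in any decomposition of $\psi$. Once this geometric incompatibility is carefully articulated, the rest of the proof is a direct assembly of the rectangle-decomposition lemmas and the sign-cancellation from Lemma \ref{signs}.
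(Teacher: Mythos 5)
Your proof is correct and follows essentially the same route as the paper: regroup $\partial_S^2$ by domains, invoke Lemmas \ref{rectangles4}, \ref{rectangles3}, and \ref{signs} to cancel the two equal-degree, opposite-sign decompositions when $|\x - \x\cap\z| \in \{3,4\}$, and rule out the annulus case via Lemma \ref{rectangles0} together with $\psi\cap\X=\emptyset$. Your explicit observation that long rectangles cannot appear in any decomposition of an $\X$-avoiding domain (since their width-one annular support meets an $X$-marking) is a subtlety the paper leaves implicit, and articulating it is a worthwhile addition rather than a deviation.
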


\begin{proof}
By expanding out definitions, we see $$\partial_S^2\x = \sum_{\z\in\mathbf{S}(\G)}\sum_{\{\psi\in\pi(\x, \z)\big| \psi\cap\X = \emptyset\}} \sum_{d\in D(\psi)}S(d)v^{\deg(d)}V_1^{O_1(\psi)}\dots V_n^{O_n(\psi)}\y.$$

Hence, it is sufficient to show that domains $\psi\in\pi(\x, \z)$ with $|D(\psi)|>0$ have exactly two decompositions $d\in D(\psi)$ with equal degree and opposite signs.

If $|\x -\x\cap\z| = 4$ or $|\x -\x\cap\z| = 3,$ then Lemma \ref{signs} tells us immediately that any $\psi\in\pi(\x, \z)$ with $|D(\psi)|>0$ has exactly two decompositions $d\in D(\psi)$ with equal degree and opposite signs.

The last case is $\x = \z.$ Lemma \ref{rectangles0} tells us that in this case, $\psi$ is an annulus, hence $\psi\cap\X\neq\emptyset,$ so this also contributes 0 to the equation.
\end{proof}

Let $GHL_S(\G; \mathbb{Z})$ be the homology of this chain complex. We will eventually see that the $V_i$ are homotopic to each other, hence calling the induced multiplication $U,$ we get $GHL_S(\G; \mathbb{Z})$ is a bigraded $\mathbb{Z}[U, v]$-module.

We wish to prove the following theorem, which is an analog of the invariance of ordinary grid homology over the integers:

\begin{theorem}\label{integer}
For each grid diagram, there exists a sign assignment; furthermore, all sign assignments produce isomorphic homology $GHL_S(\G; \mathbb{Z}).$ This homology $GHL_S(\G; \mathbb{Z}),$ as a bigraded $\mathbb{Z}[U, v]$-module, is also a knot invariant.
\end{theorem}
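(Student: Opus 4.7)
The plan is to prove Theorem \ref{integer} in three stages: existence of an extended sign assignment for every grid diagram, independence of the resulting homology from the choice of sign assignment, and invariance under the grid moves enumerated in Theorem \ref{cromwell}. Throughout, the rectangle decomposition lemmas of Section 4 will do most of the combinatorial heavy lifting.

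For existence, I would begin with an ordinary sign assignment $S_0: \Rect(\G) \to \{\pm 1\}$, whose existence is established in [OSS Proposition 15.1.7]. To extend $S_0$ to $\Rect^*(\G)$, I would assign signs to long rectangles one at a time: for each long rectangle $r \in \Rect^*(\x, \y)$, choose a domain $\psi$ admitting a decomposition $\psi = r * r'$ with $r'$ ordinary, together with an alternate decomposition of $\psi$, and let property (1') dictate the value $S(r)$. Lemma \ref{rectangles3} guarantees the existence of such an alternate decomposition in the relevant configurations, and Lemmas \ref{rectangles4} and \ref{rectangles3} together ensure well-definedness: any two auxiliary choices of $\psi$ yield the same sign for $r$. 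Conditions (2) and (3) are inherited directly from $S_0$, since Lemma \ref{rectangles0} shows that annular decompositions never involve long rectangles.

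For sign-assignment independence, given two extended sign assignments $S, S'$, form the ratio $\sigma = S/S': \Rect^*(\G) \to \{\pm 1\}$. Properties (1'), (2), (3) applied to both $S$ and $S'$ imply that $\sigma$ is ``exact'' in the sense that there exists $\eta: \mathbf{S}(\G) \to \{\pm 1\}$ with $\sigma(r) = \eta(\x)\eta(\y)$ for every $r \in \Rect^*(\x, \y)$; this is the standard cocycle-is-coboundary argument adapted from the proof of [OSS Proposition 15.3.1]. Then the $\mathbb{Z}[V_1,\dots, V_n, v]$-linear map defined on generators by $\x \mapsto \eta(\x)\x$ is a bigrading-preserving chain isomorphism between $(GCL(\G; \mathbb{Z}), \partial_S)$ and $(GCL(\G; \mathbb{Z}), \partial_{S'})$. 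For knot invariance, by Theorem \ref{cromwell} it suffices to check invariance under commutations, switches, and $X:SW$ stabilizations. For commutations and switches, I would superimpose $\G$ and $\G'$ as in Section 4.1 and define a pentagon-counting chain map weighted by $S(p) v^{\mathcal{T}(p)}$ and the $V_i$, verify that it is a chain map, and construct a hexagon-counting chain-homotopy inverse, using Corollary \ref{pents} together with its hexagonal analog and Lemma \ref{signs} to control the error terms. For stabilization, I would adapt the destabilization mapping-cone argument of [OSS Chapter 13] to the $v$-enhanced, integer-coefficient setting, writing the stabilized complex as a mapping cone and cancelling the acyclic summand.

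The main obstacle will be the existence step: verifying that the signs assigned to long rectangles are globally consistent. A single domain $\psi$ may admit many decompositions mixing ordinary and long rectangles — including configurations arising from wrapping around the torus and from the degenerate cases of Lemma \ref{rectangles0} — and property (1') must hold across all of them simultaneously. Resolving this requires a careful case analysis organized by the topological type of the lift of $\psi$ to the universal cover, and leans heavily on the decomposition classifications in Section 4. The stabilization step is also technically subtle because the $v$-weighting via $\mathcal{T}$ interacts nontrivially with the signs during mapping-cone cancellation, but I expect the existence of an extended sign assignment to be the true difficulty.
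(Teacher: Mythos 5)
Your outline matches the paper's overall architecture (existence, independence of the sign assignment, then invariance under the moves of Theorem \ref{cromwell}, with pentagon/hexagon maps for commutations and a mapping cone for stabilization), but the step you yourself flag as ``the true difficulty'' --- extending $S_0$ to long rectangles --- is genuinely unresolved in your proposal, and your greedy strategy of assigning $S(r)$ one long rectangle at a time via a single auxiliary domain does not obviously close up. The obstruction is exactly a global consistency (cocycle) problem: a fixed long rectangle participates in many domains, each imposing a constraint through property (1'), and Lemmas \ref{rectangles4} and \ref{rectangles3} only tell you that each such domain has precisely two decompositions of equal degree; they do not tell you that the system of sign constraints ranging over all domains is simultaneously satisfiable. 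Checking that it is satisfiable is essentially equivalent to re-proving existence from scratch. The paper avoids this entirely by working with the explicit parametrization of sign assignments from [OSS Section 15.2]: every $S$ has the form $S(r) = \tilde{\tau}(r)^{-1}\gamma(\sigma_{\x}^{-1})\gamma(\sigma_{\y})$ for a section $\gamma$ of the spin extension $\tilde{\mathfrak{S}}_n$, and one extends $\tilde{\tau}$ to a long rectangle by declaring it equal to the value on the associated short rectangle. Consistency then reduces to verifying three local relations in $\tilde{\mathfrak{S}}_n$ (the mixed case, where one decomposition of $\psi$ contains a long rectangle and the other does not, splits into eight elementary computations with the generators $\tilde{\tau}_{i,j}$). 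If you want to complete your route, you would need to organize the consistency check as showing a $\{\pm1\}$-valued $2$-cocycle is trivial, at which point you have effectively rebuilt the spin-extension argument.

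One structural point worth internalizing: the differential $\partial_S$ itself never sees long rectangles, because a long rectangle contains an entire row or column and hence meets $\X$, and the differential only counts rectangles with $r\cap\X=\emptyset$. So the chain complex is defined by an ordinary sign assignment, and the extension to $\Rect^*(\G)$ is needed only for the homotopy operators $\mathcal{H}_{i,S}$ (which establish $V_i\simeq V_j$, hence the $\mathbb{Z}[U,v]$-module structure, and which feed into the stabilization mapping cone). Relatedly, your independence argument need only be run on ordinary rectangles, as in [OSS Proposition 15.1.10], and your reference for the stabilization step should be the sign-refined [OSS Proposition 15.3.5] rather than the filtered theory of Chapter 13.
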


We divide the proof of this theorem into several steps. First, we lift existence and uniqueness of sign assignments for grid homology to our situation, which is rather simple:

\begin{lemma}
For each grid diagram, there exists a sign assignment; furthermore, all sign assignments produce isomorphic homology $GHL_S(\G; \mathbb{Z}).$
\end{lemma}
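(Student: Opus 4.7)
The plan is to import both existence and gauge-uniqueness of sign assignments directly from the corresponding results for standard grid homology over $\mathbb{Z}$ in [OSS Chapter 15]. First I observe that a sign assignment in the sense of this paper is a function on $\Rect(\G)$ (not on the larger set $\Rect^*(\G)$), and the three axioms it must satisfy are \emph{verbatim} those of [OSS Definition 15.1.2]. Consequently the existence of at least one sign assignment $S$ for any grid diagram $\G$ is immediate from the existence statement in [OSS Chapter 15].

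For the claim that different sign assignments produce isomorphic homology, I would appeal to the analogous gauge-uniqueness statement of [OSS Chapter 15]: given any two sign assignments $S$ and $S'$, there exists a function $f:\mathbf{S}(\G)\to\{\pm 1\}$ such that $S'(r) = f(\x)\,f(\y)\,S(r)$ for every rectangle $r\in\Rect(\x,\y)$. Using $f$, I would define a $\mathbb{Z}[V_1,\dots,V_n,v]$-linear map $\Phi:GCL(\G;\mathbb{Z})\to GCL(\G;\mathbb{Z})$ by setting $\Phi(\x)=f(\x)\,\x$ on generators. This $\Phi$ manifestly preserves the bigrading, and since $f(\x)^2=1$ it squares to the identity; it therefore suffices to verify $\partial_{S'}\Phi=\Phi\partial_S$. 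This is a direct computation: applying both sides to a generator $\x$, each rectangle $r\in\Rect(\x,\y)$ contributes $f(\y)\,S(r)\,v^{\mathcal{T}(r)}V_1^{O_1(r)}\cdots V_n^{O_n(r)}\,\y$ to $\Phi\partial_S(\x)$, whereas its contribution to $\partial_{S'}\Phi(\x)$ is $f(\x)^2 f(\y)\,S(r)\,v^{\mathcal{T}(r)}V_1^{O_1(r)}\cdots V_n^{O_n(r)}\,\y$; the two coincide because $f(\x)^2=1$. Passing to homology then yields the desired isomorphism $GHL_S(\G;\mathbb{Z})\cong GHL_{S'}(\G;\mathbb{Z})$.

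The main point to appreciate — rather than an obstacle per se — is that the only new ingredient in the differential $\partial_S$ compared with the standard signed differential of grid homology is the weighting $v^{\mathcal{T}(r)}$. Because $\mathcal{T}(r)$ depends only on $r$, and because $\Phi$ acts diagonally by scalars on the $\mathbb{Z}$-basis $\mathbf{S}(\G)$, the $v$-powers on each contributing term pass through $\Phi$ untouched. Thus the gauge-equivalence argument familiar from grid homology over $\mathbb{Z}$ extends verbatim to the double-point enhanced setting, and both halves of the lemma reduce to facts already established in the literature.
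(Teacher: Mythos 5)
Your proposal is correct and follows essentially the same route as the paper: existence is imported from [OSS Theorem 15.1.5], and for uniqueness the paper likewise invokes the gauge function $g$ with $S_2(r)=g(\x)S_1(r)g(\y)$ and defines the diagonal isomorphism $\x\mapsto g(\x)\x$, noting exactly as you do that the $v^{\mathcal{T}(r)}$ weights are unaffected. Your explicit verification of the chain-map identity is a slightly more detailed writeup of what the paper calls ``immediate,'' but the argument is the same.
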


\begin{proof}
The existence of a sign assignment follows immediately from [OSS] Theorem 15.1.5. Their proof of this theorem also shows that for any two sign assignments $S_1$ and $S_2,$ there exists a function $g:\mathbf{S}(\G)\rightarrow \{-1, 1\}$ such that $S_2(r) = g(\x)S_1(r)g(\y)$ for each $r\in\Rect(\x, \y).$ Hence, we may define a $\mathbb{Z}[V_1,\dots, V_n, v]$-module homomorphism from $(GCL(\G; \mathbb{Z}), \partial_{S_1})$ to $(GCL(\G; \mathbb{Z}), \partial_{S_2})$ by $\x\mapsto g(\x)\x.$ It is immediate that this is a chain map and an isomorphism of $\mathbb{Z}[V_1,\dots, V_n, v]$-modules, and this suffices for the proof. (This is an analog of the map defined in [OSS] Proposition 15.1.10.)
\end{proof}

Next, we show that the $V_i$ are homotopic. For this, we require a lemma about extended sign assignments, and some definitions. For any long rectangle $r\in\Rect^*(\x, \y),$ deleting an annulus leaves a rectangle $r'\in\Rect^*(\x,\y)$ which we will call the \emph{associated short rectangle.}

\begin{lemma}
Each sign assignment $S:\Rect(\G)\rightarrow\{-1, 1\}$ may be extended to an extended sign assignment, such that if $r\in\Rect^*(\x,\y)$ is long and $r'\in\Rect^*(\x,\y)$ is its associated short rectangle then $S(r) = S(r').$
\end{lemma}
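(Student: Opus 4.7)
The plan is to define the extension $S\colon\Rect^*(\G)\to\{\pm 1\}$ by the prescription in the statement itself: set $S(r):=S(r')$ whenever $r$ is a long rectangle with associated short rectangle $r'$, and leave $S$ unchanged on short rectangles. I then need to verify that this extension satisfies the three defining conditions (1'), (2), and (3) of an extended sign assignment.

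Conditions (2) and (3) require no work. A width-one horizontal or vertical annulus admits its unique decomposition $r_1*r_2$ into two short rectangles by Lemma \ref{rectangles0}, so the annulus identities only constrain $S|_{\Rect(\G)}$, which was already a sign assignment by hypothesis. Hence horizontal annuli still have $S(r_1)S(r_2)=+1$ and vertical annuli still give $-1$, automatically.

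The main content is verifying (1'). Given two decompositions $\psi=r_1*r_2=r_1'*r_2'$ in $\Rect^*(\G)$, I would reduce to the original property (1) by \emph{stripping} the annuli carried by any long rectangles. For each long $r_i$, write $r_i = \tilde r_i + A_i$, where $A_i$ is the corresponding full-wrap row or column annulus and $\tilde r_i\in\Rect(\G)$ is the associated short rectangle; for short $r_i$, set $\tilde r_i := r_i$ and $A_i:=0$. Then $\tilde r_1*\tilde r_2$ is a short-rectangle decomposition of the reduced domain $\tilde\psi := \psi - \sum_i A_i$, and likewise for the primed side. The crucial geometric observation is that a short rectangle lifts to an embedded rectangle inside a fundamental domain of the torus and so cannot cover any entire row or column; thus every full-wrap annular component in $\psi$ must be contributed by a long rectangle, and the multiset $\{A_i\}$ is determined by $\psi$ rather than by the chosen decomposition. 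Both decompositions therefore strip to short-rectangle decompositions of the \emph{same} $\tilde\psi$. Applying property (1) to this common reduced domain gives $S(\tilde r_1)S(\tilde r_2) = -S(\tilde r_1')S(\tilde r_2')$, and combining with the defining identity $S(r_i)=S(\tilde r_i)$ for the extension yields $S(r_1)S(r_2) = -S(r_1')S(r_2')$, which is exactly (1').

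The main obstacle is the bookkeeping behind the stripping map: I need to confirm that the long rectangles appearing in different decompositions of $\psi$ really do correspond uniquely to the same full-wrap annular components, and that the resulting short decompositions legitimately decompose the common reduced domain $\tilde\psi$ (in particular sharing the correct intermediate grid state, so that $\tilde r_1*\tilde r_2$ is a valid composition in $\Rect(\G)$). I expect to carry this out by a case analysis on $|\x-\x\cap\z|$ and on the number of wrapping rows and columns in the support of $\psi$, paralleling the case analysis in Lemmas \ref{rectangles4}, \ref{rectangles3}, and \ref{rectangles0}, and exploiting the bounded extent of short rectangles to force the matching of annuli across the two decompositions.
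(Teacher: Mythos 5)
Your reduction of condition (1') breaks down in precisely the case that carries all the content of the lemma: a domain $\psi$ with one decomposition $\psi = r * t$ in which $t$ is long and $r$ is short, and a second decomposition $\psi = r_1' * r_2'$ into two short rectangles. Such domains exist --- they are exactly the L-shaped regions treated in the second half of the proof of Lemma \ref{rectangles3}, where the multiplicity-two part of $\psi$ arises in one decomposition from the wrap-around of $t$ and in the other from the overlap, on the torus, of the two embedded short rectangles $r_1'$ and $r_2'$. Your key assertion, that the multiset of stripped annuli is determined by $\psi$ alone, is therefore false: stripping the first decomposition produces a decomposition of $\psi - A$ (where $A$ is the width-one annulus contained in $t$), while the second decomposition is left untouched and still decomposes $\psi$ itself. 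Since the two stripped decompositions do not decompose a common domain, property (1) of the original sign assignment cannot be invoked, and the identity $S(r)S(t') = -S(r_1')S(r_2')$ is exactly what remains to be proved.

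This mixed case is where the paper does its real work: it realizes $S$ via a section of the spin extension $\tilde{\mathfrak{S}}_n$ of the symmetric group, extends the map $\tilde{\tau}$ to long rectangles through the associated short rectangle (which forces $S(r)=S(r')$ automatically), and then verifies the required relation $\tilde{\tau}(r)\cdot\tilde{\tau}(t) = z\cdot\tilde{\tau}(r_1')\cdot\tilde{\tau}(r_2')$ by an explicit eight-case computation with the generators of $\tilde{\mathfrak{S}}_n$. Your stripping argument does correctly dispose of conditions (2) and (3) and of the decompositions in which both sides carry the same long-rectangle annuli, but to close the gap you would need either to import this group-theoretic computation or to replace it with a direct argument relating a decomposition of $\psi$ to one of $\psi - A$ --- for instance by interpolating through three-fold decompositions involving the annulus $A$ and the annulus signs from (2) and (3) --- neither of which follows from the observation that short rectangles embed in a fundamental domain.
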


\begin{proof}
We use the notation of [OSS Chapter 15]. Let $\tilde{\mathfrak{S}}_n$ denote the spin extension of the symmetric group on $n$ letters (see [OSS Section 15.2]), and let $\tilde{T}$ denote the set of lifts of transpositions in $\mathfrak{S}_n$; these are indexed by ordered pairs of distinct integers in $\{1,\dots, n\}$ and denoted by $\tilde{\tau}_{i,j}$ for $i\neq j.$ Here, $\tilde{\tau}_{i,j}$ and $\tilde{\tau}_{j,i}$ are the two lifts of the permutation $(i j)\in\mathfrak{S}_n.$ Also, $z\neq1\in\tilde{\mathfrak{S}}_n$ is the other lift of the identity $1\in\mathfrak{S}_n.$

Let $\tilde{\tau}:\Rect^*(\G)\rightarrow \tilde{T}$ be the map sending $r\in\Rect^*(\G)$ to $\tilde{\tau}_{i,j}$ where the southwest corner of the associated short rectangle $r'$ is on $\beta_i$ and the northeast corner of $r$ is on $\beta_i$. We wish to show the following three conditions hold:
\begin{itemize}
	\item If there is a region $\psi$ with two decompositions $\psi = r_1*r_2 = r_1'*r_2'$ for $r_1, r_2, r_1', r_2'\in\Rect^*(\G)$ with at most one of $r_1, r_2$ long, then $$\tilde{\tau}(r_1)\cdot\tilde{\tau}(r_2) = z \cdot\tilde{\tau}(r_1')\cdot\tilde{\tau}(r_2').$$
	\item If $r_1*r_2$ forms a horizontal annulus of multiplicity 1, then: $$\tilde{\tau}(r_1)\cdot\tilde{\tau}(r_2) = 1.$$
	\item If $r_1*r_2$ forms a vertical annulus of multiplicity 1, then: $$\tilde{\tau}(r_1)\cdot\tilde{\tau}(r_2) = z.$$
\end{itemize}

Assuming for now that these three conditions hold, we will demonstrate the existence of the extension of our sign assignment $S.$ For each grid state $\x,$ let $\sigma_{\x}\in\mathfrak{S}_n$ be the corresponding permutation which is determined uniquely by $\alpha_i\cap\beta_{\sigma_{\x}(i)}\in\x.$ [OSS Section 15.2] proves that every sign assignment, in particular $S,$ is given, for $r\in\Rect(\x, \y)$, as $S(r) = \tilde{\tau}(r)^{-1}\gamma(\sigma_{\x}^{-1})\gamma(\sigma_{\y}),$ for some section of the spin extension $\gamma:\mathfrak{S}_n\rightarrow\tilde{\mathfrak{S}}_n.$ Furthermore, the proof of [OSS Proposition 15.2.12] tells us that if $S$ is extended to long rectangles by the same formula, $S(r) = \tilde{\tau}(r)^{-1}\gamma(\sigma_{\x}^{-1})\gamma(\sigma_{\y}),$ then $S$ will satisfy the conditions of an extended sign assignment if $\tilde{\tau}$ obeys the three above conditions.

Thus, it is sufficient to show that $\tilde{\tau}$ obeys the three above conditions. The second and third are immediate from [OSS Section 15.2] since they do not apply to long rectangles. The first condition is immediate in the case when both decompositions of $\psi$ involve one long rectangle, since deleting an annulus from $\psi$ and in turn from each of the long rectangles does not change the map $\tilde{\tau}$ and this condition now follows from the equivalent condition for not long rectangles.

Finally, we must show this condition is true when one decomposition, $r_1*r_2,$ of $\psi$ involves a long rectangle and the other, $r_1'*r_2',$ does not. Say that $r_1$ is long and $r_1'$ has width one. This reduces to eight cases, corresponding to whether $r_1$ horizontal or vertical, and whether the multiplicity 2 portion of $\psi$ is in the Southwest, Southeast, Northwest, or Northeast corner of $r_2'.$ For each, it is a simple computation involving the relations among the members of $\tilde{T}.$
\end{proof}

\begin{definition}
We define the sign-refined homotopy operator as follows. Let $X_i\in\X$ share a row with $O_i$. Then, define the function: $$\mathcal{H}_{i, S}(\x) = \sum_{\y\in\mathbf{S}(\G)}\sum_{\{r\in\Rect^*(\x, \y)\big| r\cap\X = X_i, X_i(r)=1\}} S(r)v^{\mathcal{T}(r)}V_1^{O_1(r)}\dots V_n^{O_n(r)}\y.$$
\end{definition}

\begin{proposition}
Suppose $X_i$ shares a column with $O_j.$ Then following equation holds: $$\mathcal{H}_{i, S}\circ\partial_S + \partial_S\circ\mathcal{H}_{i, S} = V_i - V_j.$$
\end{proposition}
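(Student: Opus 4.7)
The plan is to expand $\mathcal{H}_{i,S}\circ\partial_S+\partial_S\circ\mathcal{H}_{i,S}$ applied to a grid state $\x$ as a single sum over domains $\psi$ equipped with ordered two-piece decompositions $\psi=r_1*r_2$, in which exactly one of $r_1,r_2$ is a short rectangle disjoint from $\X$ (coming from $\partial_S$) and the other is a possibly-long rectangle with $r\cap\X=X_i$ and $X_i(r)=1$ (coming from $\mathcal{H}_{i,S}$). Each summand contributes $S(r_1)S(r_2)\,v^{\mathcal{T}(r_1)+\mathcal{T}(r_2)}V_1^{O_1(\psi)}\cdots V_n^{O_n(\psi)}\z$. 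Any such $\psi$ must satisfy $\psi\cap\X=\{X_i\}$ with $\psi$ having multiplicity exactly $1$ at $X_i$, so in \emph{any} two-rectangle decomposition of this $\psi$ the $X_i$-multiplicity is forced to split as $1+0$; hence whenever such a $\psi$ admits multiple rectangle decompositions, both are automatically of the required form.

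I would then classify by $|\x-\x\cap\z|$, which must lie in $\{0,3,4\}$ since each rectangle exchanges exactly two points. For $|\x-\x\cap\z|=3$ or $4$, Lemma \ref{signs} --- the sign-refined strengthening of Lemmas \ref{rectangles4} and \ref{rectangles3} --- asserts that the two decompositions of $\psi$ have equal $\mathcal{T}$-degrees and opposite signs, so they cancel. The hypotheses hold in our setting because the $\partial_S$ rectangle is always short (so ``not both long'' is automatic), and the multiplicity conditions in Lemma \ref{rectangles3} hold since at most one long rectangle can appear: its row or column has mixed multiplicities $1$ and $2$ and is not uniformly $\geq 2$, and no row or column in the support of $\psi$ is uniformly of multiplicity $1$.

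The only surviving contributions come from $\x=\z$. Lemma \ref{rectangles0} then tells us $\psi$ is a thin annulus of multiplicity $1$ admitting a unique decomposition $r_1*r_2$, with both pieces short. The constraint $\psi\cap\X=\{X_i\}$ forces the annulus to be either the horizontal annulus through the row of $X_i$ (containing $X_i$ and $O_i$) or the vertical annulus through the column of $X_i$ (containing $X_i$ and $O_j$). In each case $\mathcal{T}(r_1)=\mathcal{T}(r_2)=0$, since the two grid-state points inside the annulus lie on its $\alpha$- (resp.\ $\beta$-) boundary. The sign-assignment axioms give $S(r_1)S(r_2)=+1$ for the horizontal annulus and $-1$ for the vertical annulus, yielding contributions $+V_i\x$ and $-V_j\x$ respectively, summing to $(V_i-V_j)\x$. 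The main obstacle is making the pairing argument watertight when a long rectangle from $\mathcal{H}_{i,S}$ is involved: I must check that both decompositions produced by Lemmas \ref{rectangles4} or \ref{rectangles3} continue to split $\psi$ as one $X_i$-containing piece plus one $\X$-disjoint piece, so that both are genuinely counted in the expansion and their opposite-sign contributions cancel in full.
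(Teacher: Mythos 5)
Your proposal is correct and follows essentially the same route as the paper: expand the anticommutator as a sum over domains $\psi$ with $\psi\cap\X=X_i$ and $X_i(\psi)=1$, cancel the $|\x-\x\cap\z|=3,4$ cases via Lemma \ref{signs}, and reduce the $\x=\z$ case via Lemma \ref{rectangles0} to the two thin annuli through $X_i$, which contribute $V_i$ and $-V_j$ by the annulus axioms for sign assignments. The extra verifications you flag (that a long rectangle necessarily meets $\X$, so the $\X$-disjoint piece of any decomposition is automatically short and both decompositions are genuinely counted) are worth spelling out but do not change the argument.
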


\begin{proof}
By expanding out definitions, we see the left-hand side is given as $$\sum_{\z\in\mathbf{S}(\G)}\sum_{\{\psi\in\pi(\x, \z)\big| \psi\cap\X = X_i, X_i(\psi)=1\}} \sum_{d\in D^*(\psi)}S(d)v^{\deg(d)}V_1^{O_1(\psi)}\dots V_n^{O_n(\psi)}\y.$$

If $|\x -\x\cap\z| = 4$ or $|\x -\x\cap\z| = 3,$ then Lemma \ref{signs} tells us immediately that any $\psi\in\pi(\x, \z)$ with $|D(\psi)|>0$ has exactly two decompositions $d\in D(\psi)$ with equal degree and opposite signs. Hence, all such $\psi$ contribute zero to the above sum.

The last case is $\x = \z.$ Lemma \ref{rectangles0} tells us that in this case, $\psi$ is an annulus, hence $\psi\cap\X=X_i$ has precisely two solutions, a horizontal thin annulus and a vertical thin annulus. (The multiplicities of these annuli must be 1 since $X_i(\psi)=1.$) The horizontal thin annulus contributes $V_i$ and the vertical thin annulus contributes $-V_j.$
\end{proof}

\begin{proposition}
For any $k, l,$ we have $V_k$ and $V_l$ are chain homotopic.
\end{proposition}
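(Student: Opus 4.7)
The plan is to iterate the chain-homotopy identity from the preceding proposition by traversing the knot. For each $i \in \{1, \dots, n\}$, define $\pi(i) = j$ where $j$ is the unique index such that $X_i$ and $O_j$ share a column. This is a well-defined permutation of $\{1, \dots, n\}$ since each column of $\G$ contains exactly one $X$-marking and exactly one $O$-marking. The previous proposition then immediately gives, for each $i$, that
$$\mathcal{H}_{i,S}\circ\partial_S + \partial_S\circ\mathcal{H}_{i,S} = V_i - V_{\pi(i)},$$
so $V_i$ and $V_{\pi(i)}$ are chain homotopic via $\mathcal{H}_{i,S}$.

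Next I would identify the orbits of $\pi$ with the components of the underlying link. Starting at $O_i$ and traveling along the horizontal segment of the link diagram, we arrive at $X_i$, since by the labeling convention $X_i$ and $O_i$ lie in the same row. Then continuing along the vertical segment starting at $X_i$, we reach the unique $O$-marking in that column, namely $O_{\pi(i)}$. Iterating shows that the ordered sequence of $O$-markings encountered as we traverse one link component starting at $O_i$ is precisely $O_i, O_{\pi(i)}, O_{\pi^2(i)}, \dots$. Since the link underlying $\G$ is a knot, it has a single component, and therefore $\pi$ acts as a single $n$-cycle on $\{1, \dots, n\}$.

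Finally, chain homotopy is an equivalence relation on endomorphisms of a chain complex, with homotopies combining additively: if $H_1$ and $H_2$ realize $f \simeq g$ and $g \simeq h$ respectively, then $H_1 + H_2$ realizes $f \simeq h$. Applying this inductively, $V_i$ and $V_{\pi^m(i)}$ are chain homotopic via $\sum_{l=0}^{m-1}\mathcal{H}_{\pi^l(i), S}$ for every $m \geq 0$. Given any $k, l \in \{1, \dots, n\}$, the fact that $\pi$ is an $n$-cycle produces some $m$ with $\pi^m(k) = l$, and the above sum is a chain homotopy between $V_k$ and $V_l$. The main conceptual step is the identification of $\pi$-orbits with link components; everything else is bookkeeping, and no serious obstacle is anticipated.
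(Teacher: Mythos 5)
Your argument is correct and is essentially the paper's own proof: the paper likewise observes that the homotopy identity relates $V_i$ to $V_j$ for consecutive $O$-markings along the knot and concludes by single-component connectivity and transitivity of chain homotopy. Your version just makes the permutation $\pi$ and the additivity of the homotopies explicit, which is fine.
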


\begin{proof}
The above proposition shows that $V_i$ and $V_j$ are chain homotopic whenever $O_i$ and $O_j$ are consecutive $O$-markings as we traverse the knot. Since the knot has one component, each $O_k$ and $O_l$ are both members of a finite sequence of consecutive $O$-markings, which suffices for the proof.
\end{proof}

The remainder of the proof of invariance of the sign-refined homology $GHL_S(\G;\mathbb{Z})$ is to show it is a knot invariant. By Theorem \ref{cromwell}, it suffices to show that this homology is unchanged under commutations and switches, and also under (de)-stabilizations of type $X:SW.$

\subsection{Commutation and Switch Invariance}

First, we show this for the commutations and switches; we model our arguments off of those in [OSS Section 15.3]. As in Section 2.1, we take two grid diagrams $\G$ and $\G'$ related by a commutation or switch and superimpose them; we borrow here the notation from that section. We will do the computations below for a column commutation or switch; a row commutation or switch proceeds identically. Very concretely, the goal of this section is to prove the following theorem:

\begin{theorem}
If two grid diagrams $\G$ and $\G'$ as above differ by a column commutation or switch, then $GCL_S(\G;\mathbb{Z})$ and $GCL_S(\G';\mathbb{Z})$ are quasi-isomorphic chain complexes.
\end{theorem}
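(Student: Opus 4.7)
The plan is to adapt the pentagon/hexagon quasi-isomorphism argument of [OSS Section 15.3] to the double-point enhanced setting, replacing rectangles throughout by their (possibly long) analogs and using the decomposition lemmas of Section 4, especially Corollary \ref{pents}, as the combinatorial backbone. First I would extend the sign assignment $S$ to $\Pent^*(\x,\y')$ and $\Hex^*(\x,\y)$ by declaring $S(p) := S(R(p))$, where $R(p)$ is the associated rectangular domain; the cocycle condition over polygon decompositions then follows from the extended sign assignment on $\Rect^*(\G)$ together with the bijection between polygon and rectangle decompositions recorded in Section 4.2.

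Next, define the pentagon-counting map
$$P_S(\x) := \sum_{\y' \in \mathbf{S}(\G')} \sum_{\{p \in \Pent^*(\x, \y') \,\mid\, p \cap \X = \emptyset\}} S(p)\, v^{\mathcal{T}(p)}\, V_1^{O_1(p)}\cdots V_n^{O_n(p)}\, \y',$$
and verify $\partial_{S'} \circ P_S + P_S \circ \partial_S = 0$. Expanding the composition gives a sum over domains $\psi \in \pi(\x,\z')$ with $\psi \cap \X = \emptyset$ that admit a decomposition as pentagon$\ast$rectangle or rectangle$\ast$pentagon, at most one factor long. By Corollary \ref{pents} every such $\psi$ admits exactly two such decompositions, of equal $v$-degree; the extended sign assignment then forces these two contributions to cancel. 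The degenerate $\x=\z'$ case is handled by Lemma \ref{rectangles0} and Remark \ref{bigon}: such $\psi$ would be a thin annulus straddling $\beta_i$ or $\gamma_i$, which must swallow a bigon and hence an $X$-marking, so it does not contribute.

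Define $P'_{S'}: GCL_{S'}(\G';\mathbb{Z}) \to GCL_S(\G;\mathbb{Z})$ by the symmetric pentagon count from $\G'$ to $\G$; the same argument shows it is a chain map. To upgrade these to a pair of quasi-isomorphisms, construct the hexagon-counting homotopy
$$\mathcal{H}_S(\x) := \sum_{\y \in \mathbf{S}(\G)} \sum_{\{h \in \Hex^*(\x, \y) \,\mid\, h \cap \X = \emptyset\}} S(h)\, v^{\mathcal{T}(h)}\, V_1^{O_1(h)}\cdots V_n^{O_n(h)}\, \y,$$
and show
$$\partial_S \circ \mathcal{H}_S + \mathcal{H}_S \circ \partial_S = \mathrm{id} - P'_{S'} \circ P_S,$$
with the analogous identity on $\G'$. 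The left side is again a sum over domains $\psi$ admitting a hexagon$\ast$rectangle or rectangle$\ast$hexagon decomposition; Corollary \ref{pents} pairs these except for two classes of degeneration: hexagons that factor through a point near the bigon, whose associated rectangular domain is trivial and which are forced by Lemma \ref{pentagon_grading} and Lemma \ref{rectangles0} to contribute exactly $\mathrm{id}$; and hexagons that decompose as a pair of pentagons, which reassemble into $P'_{S'} \circ P_S$ with the correct sign.

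The main obstacle will be the signed bookkeeping in the hexagon identity, particularly the degenerate terms: one must verify that the thin-hexagon near-the-bigon degenerations contribute exactly $+\mathrm{id}$ and that the pentagon$\ast$pentagon degenerations contribute exactly $-P'_{S'}\circ P_S$ after taking into account the extended sign assignment on both long pentagons and long hexagons. The $v$-weighted cancellation is subtler than in the ordinary grid complex because a long factor on one side of a decomposition must be matched with a long or short factor of the partner decomposition of the same total degree, and here the final claim of Corollary \ref{pents} (that each paired decomposition preserves not only the support but also the $\mathcal{T}$-contribution) is precisely what makes the cancellation go through at each fixed bigrading.
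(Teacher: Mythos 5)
Your architecture is the same as the paper's (pentagon maps as the quasi-isomorphisms, a hexagon map as the homotopy, Corollary \ref{pents} as the combinatorial engine), but the sign convention you propose for pentagons, $S(p):=S(R(p))$, is exactly the choice that fails, and this is the crux of the whole argument over $\mathbb{Z}$. Corollary \ref{pents} pairs the decompositions of a domain $\psi$ in two qualitatively different ways: either the two polygon decompositions of $\psi$ project to \emph{different} rectangle decompositions of the associated domain $\Psi$ (where axiom (1') gives $S(r)S(R(p))=-S(R(p'))S(r')$), or --- when the fifth corner at $a$ or $b$ lies on the cut --- they project to the \emph{same} rectangle decomposition of $\Psi$ (where $S(r)S(R(p))=S(R(p'))S(r')$ on the nose). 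With your convention the first kind of pair cancels in the anticommutator $\partial_{S'}P_S+P_S\partial_S$ but survives in the commutator, while the second kind cancels in the commutator but survives in the anticommutator; since both kinds occur, $P_S$ satisfies neither identity and is not a chain map in any sense. The paper's definition $S(p):=(-1)^{M(\x)+B(p)}S(R(p))$, with $B(p)$ recording on which side of $\beta_i$ the pentagon lies, is engineered precisely so that the extra sign flips in the first case (the initial states of the two pentagons then have opposite Maslov parity) and also in the second case (the two pentagons then have opposite $B$-values), making every pair cancel in the commutator.

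The same twist is indispensable in the homotopy identity: when a domain admits both a pentagon$*$pentagon decomposition and a hexagon$*$rectangle decomposition with the \emph{same} associated rectangular decomposition, your convention makes their signed contributions equal rather than opposite, so $\partial_S\mathcal{H}_S+\mathcal{H}_S\partial_S+P'_{S'}P_S$ picks up terms with coefficient $2$ and the formula fails over $\mathbb{Z}$ (this is invisible mod $2$, which is why the unsigned argument does not detect it). Two smaller points: a degree-$(0,0)$ chain map should commute, not anticommute, with the differentials, so even your target identity is the wrong one; and in the degenerate case $|\x-\x\cap\z'|=1$ the relevant domains do contribute --- the associated rectangular domain is a thin annulus of multiplicity one that need not contain a bigon --- and one must check, as in [OSS Lemma 15.3.3], that they cancel in pairs rather than vanish individually.
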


Again, we recall that by Remark \ref{bigon}, we may guarantee that each bigon bounded by $\beta_i$ and $\gamma_i$ contains at least one $\X$-marking inside.

First, we must define pentagon and hexagon maps; the pentagon maps will provide the quasi-isomorphism and the hexagon map will be the relevant homotopy operator. For this, we need signs for pentagons and hexagons.

\begin{definition}
For a pentagon $p\in\Pent^*(\x, \y'),$ let $P$ be its associated rectangular domain. We define the sign of $p$ as follows: $$S(p) := (-1)^{M(\x)+B(p)}S(P),$$ where $B(p)$ is 1 if $p$ lies to the left of $\beta_i$ and 0 if $p$ lies to the right of $\beta_i.$ Similarly, for $p\in\Pent^*(\x', \y),$ let $P$ be its associated rectangular domain. We define the sign of $p$ as follows: $$S(p) := (-1)^{M(\y)+B(p)}S(P).$$

For a hexagon $h\in\Hex^*(\x, \y),$ let $H$ be its associated rectangular domain. We define the sign of $h$ simply as $S(h) := S(H).$
\end{definition}

\begin{definition}
The pentagon map $P_S:GCL_S(\G;\mathbb{Z})\rightarrow GCL_S(\G';\mathbb{Z})$ is defined as $$P_S(\x) = \sum_{\y'\in\mathbf{S}(\G')}\sum_{\{p\in\Pent^*(\x, \y')\big| p\cap\X = \emptyset\}} S(p)v^{\mathcal{T}(p)}V_1^{O_1(p)}\dots V_n^{O_n(p)}\y',$$ and we define a similar map $P_S':GCL_S(\G';\mathbb{Z})\rightarrow GCL_S(\G;\mathbb{Z})$ by $$P_S'(\x') = \sum_{\y\in\mathbf{S}(\G)}\sum_{\{p\in\Pent^*(\x', \y)\big| p\cap\X = \emptyset\}} S(p)v^{\mathcal{T}(p)}V_1^{O_1(p)}\dots V_n^{O_n(p)}\y.$$ The hexagon map $H_S:GCL_S(\G;\mathbb{Z})\rightarrow GCL_S(\G;\mathbb{Z})$ is defined as $$H_S(\x) = \sum_{\y\in\mathbf{S}(\G)}\sum_{\{h\in\Hex^*(\x, \y)\big| h\cap\X = \emptyset\}} S(h)v^{\mathcal{T}(h)}V_1^{O_1(h)}\dots V_n^{O_n(h)}\y.$$
\end{definition}

\begin{proposition}
The pentagon map $P_S$ is a bigraded quasi-isomorphism.
\end{proposition}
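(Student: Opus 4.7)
The plan is to follow the template of [OSS Section 15.3] for commutation invariance of ordinary grid homology, now adapted to the double-point enhanced setting where we must simultaneously handle long pentagons, the $v$-grading, and sign conventions. The proof will have three parts: first, that $P_S$ is a chain map; second, that it preserves the bigrading; and third, that $P_S$ admits a homotopy inverse $P_S'$ with the hexagon map $H_S$ implementing the chain homotopy $P_S'\circ P_S \simeq \mathrm{id}$, together with the symmetric statement for $P_S\circ P_S'$.

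For the chain map property, I would expand $(\partial_S\circ P_S+P_S\circ\partial_S)(\x)$ as a sum indexed by domains $\psi\in\pi(\x,\z')$ with $\psi\cap\X=\emptyset$, each decomposed as $p*r$ or $r*p$ where one factor is a (possibly long) rectangle and the other is a (possibly long) pentagon, not both long. Passing to the associated rectangular domain $\Psi\in\Rect^*(\x,I(\z'))$, the bijection between such decompositions of $\psi$ and rectangle-rectangle decompositions of $\Psi$ established in Corollary \ref{pents} (together with the bigon-redistribution argument there) shows that each contributing $\psi$ admits exactly two decompositions of equal degree. I then need to verify that these two decompositions appear with opposite signs, which via the rule $S(p)=(-1)^{M(\x)+B(p)}S(\Psi)$ reduces to Lemma \ref{signs} applied to $\Psi$, after carefully tracking how $B(p)$ changes across the bigon in the two cases of Corollary \ref{pents}.

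For bigrading preservation, I would combine Lemma \ref{pentagon_grading} with the Maslov and Alexander shift formulas for the associated rectangular domain $\Psi$, using that $\mathcal{T}(\Psi)=\mathcal{T}(p)$ and that the $O$- and $X$-multiplicities agree with those of $p$. The contribution of a pentagon from $\x$ to $\y'$ equals the rectangle contribution from $\x$ to $I(\y')$ together with the correction $M(\y')-M(I(\y'))$ from Lemma \ref{pentagon_grading}; combined with the bigrading of $v^{\mathcal{T}(p)}\prod V_i^{O_i(p)}$ (where $v$ has bidegree $(2,0)$ and each $V_i$ has bidegree $(-2,-1)$), the total shift is $(0,0)$.

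For the quasi-isomorphism, I would establish $P_S'\circ P_S - \mathrm{id} = \partial_S\circ H_S + H_S\circ\partial_S$ by expanding both sides as sums over domains $\psi\in\pi(\x,\z)$ with $\x,\z\in\mathbf{S}(\G)$, organized by $|\x-\x\cap\z|\in\{0,3,4\}$. For $|\x-\x\cap\z|=3$ or $4$, extended versions of Lemma \ref{signs} adapted to pentagon-pentagon and hexagon-rectangle pairings give pairwise cancellation of equal-degree decompositions. For $\x=\z$, Lemma \ref{rectangles0} forces $\psi$ to be a thin annulus, and the only surviving contributions come from the small bigon regions between $\beta_i$ and $\gamma_i$; here the horizontal/vertical sign rules (conditions (2) and (3) in the definition of a sign assignment) combine to produce exactly the identity on $\x$. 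The symmetric identity for $P_S\circ P_S'$ follows by swapping the roles of $\G$ and $\G'$, and the resulting chain homotopy equivalence yields the quasi-isomorphism. The main obstacle throughout is the sign bookkeeping: verifying case by case that the pentagon sign $(-1)^{M(\x)+B(p)}S(\Psi)$ and the hexagon sign $S(H)$ interact correctly with $S$ under the rectangle decomposition lemmas, so that all non-identity contributions cancel and the two surviving annulus terms sum to $+\mathrm{id}$ rather than cancelling.
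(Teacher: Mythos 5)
Your proposal follows essentially the same route as the paper: associated rectangular domains, Corollary \ref{pents} plus Lemma \ref{signs} for the cancellations, the pentagon sign rule $(-1)^{M(\x)+B(p)}S(\Psi)$ tracked through the two decompositions, Lemma \ref{pentagon_grading} for the bigrading, and the hexagon homotopy formula with the thin-annulus case supplying the identity term. Two small points. First, in the chain-map step your case analysis leans entirely on Corollary \ref{pents}, which only covers $|\x-\x\cap\z'|=3$ or $4$; a pentagon juxtaposed with a rectangle can also yield $|\x-\x\cap\z'|=1$, where the associated rectangular domain is an annulus. That case is not covered by the corollary and needs its own (short) argument: the condition $\psi\cap\X=\emptyset$ together with the bigon assumption forces the annulus to be thin of multiplicity one, so all degrees vanish and the counting reduces to the unenhanced argument of [OSS Lemma 15.3.3]. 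As written, your proof would silently skip this case. Second, on signs: the paper's homotopy identity comes out as $H_S\circ\partial_S+\partial_S\circ H_S+P_S\circ P_S'=-\mathrm{Id}$, so the homotopy inverse of $P_S$ is $-P_S'$ rather than $P_S'$; your stated target with the surviving annulus terms summing to $+\mathrm{id}$ is off by this overall sign, though you correctly flag the sign bookkeeping as the point to verify and the discrepancy does not affect the quasi-isomorphism conclusion.
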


\begin{proof}
First, we show $P_S$ is bigraded. This is an immediate consequence of the relative formulas for Maslov and Alexander gradings as well as Lemma \ref{pentagon_grading}.

Next, we show $P_S$ is a chain map; this amounts to showing that $P_S\circ\partial_S-\partial_S\circ P_S = 0.$ 
For a domain $\psi\in\pi(\x, \z'),$ let $D_{pr}(\psi)$ be the set of all decompositions of $\psi$ as a (possibly long) pentagon and a (not long) rectangle in that order, and $D_{rp}(\psi)$ the reverse.
Expanding out the equation, we wish to show $$\sum_{\z'\in\mathbf{S}(\G')}\sum_{\{\psi\in\pi(\x, \z')\big| \psi\cap\X = \emptyset\}} \sum_{d\in D_{pr}(\psi)} S(d)v^{\deg(d)}V_1^{O_1(\psi)}\dots V_n^{O_n(\psi)}\z' $$$$= \sum_{\z'\in\mathbf{S}(\G')}\sum_{\{\psi\in\pi(\x, \z')\big| \psi\cap\X = \emptyset\}} \sum_{d\in D_{rp}(\psi)} S(d) v^{\deg(d)}V_1^{O_1(\psi)}\dots V_n^{O_n(\psi)}\z'.$$

In this case, we see immediately that $|\x - \x\cap\z'|$ could be 3, 4, or 1. If it is 4, then the proof of Corollary  \ref{pents} tells us that any $\psi$ with $|D_{rp}(\psi)\cup D_{pr}(\psi)|\geq1$ has exactly two decompositions $r*p$ and $p'*r'$ of equal degree corresponding to the two rectangular decompositions of $\Psi.$ To show the contribution of $\psi$ is equal on both sides of the equation, it is sufficient to show that the signs of these two decompositions is equal. By the definition of a sign assignment, and the decompositions of $\Psi,$ we see that $$S(r)S(R(p)) = -S(R(p'))S(r').$$ Since the pentagons $p$ and $p'$ have initial grid states of opposite parity and clearly $B(p)=B(p'),$ as they have the same support, the signs $S(r*p)$ and $S(p'*r')$ are indeed equal.

Next, suppose $|\x - \x\cap\z'|=3.$ Then, as in the proof of Corollary \ref{pents}, any $\psi$ with $|D_{rp}(\psi)\cup D_{pr}(\psi)|\geq1$ has exactly two decompositions of equal degree corresponding to the two rectangular decompositions of $\Psi.$ 

The proof of Corollary \ref{pents} gives us two cases. First, that these two decompositions correspond to different decompositions of $\Psi.$ Here, the two associated rectangular decompositions have opposite signs. However, the $B$-value of the pentagons in each decomposition is obviously equal, and the Maslov initial grid states of the pentagons in each decomposition differ in parity if and only if one decomposition is in $D_{rp}(\psi)$ and the other is in $D_{pr}(\psi).$ Hence, $\psi$ contributes equally to both sides of the above equation in this case. 

Otherwise, these two decompositions correspond to the same rectangular decomposition of $\Psi,$ in which case the decompositions are of the form $r*p$ and $p'*r'$, and $b(p)=-b(p'),$ so $\psi$ also contributes equally to both sides of the above equation in this case.

The final case is when $|\x - \x\cap\z'|=1.$ Then, the associated rectangular domain is an annulus, and must therefore be thin to avoid intersection with $\X$; furthermore, the condition that each bigon contains an $X$-marking prevents this annulus from having multiplicity 2. Hence, the degrees of all decompositions must be 0, and the remainder of the proof proceeds exactly as in [OSS Lemma 15.3.3].

By identical reasoning, $P_S'$ is also a bigraded chain map.

Finally, we show a homotopy formula. Specifically, we will show that $$H_S\circ\partial_S+\partial_S\circ H_S + P_S\circ P_S' = -\text{Id},$$ hence $P_S\circ (-P_S')$ is chain-homotopic to the identity. By an analogous argument, we may also show that $(-P_S')\circ P_S$, which would complete the proof that $P_S$ is a quasi-isomorphism.

For this formula, we consider a domain $\psi\in\pi(\x, \z)$ contributing at least one nonzero term to the left-hand side. By the proof of Corollary \ref{pents}, if $|\x - \x\cap\z| =$ 3 or 4, then there are exactly two decompositions of $\psi$. If the associated rectangular decompositions of $\Psi$ are different, then we have two possibilities. Either both decompositions correspond to hexagons and rectangles only, in which case $\psi$ contributes zero to the left-hand side by the definition of signs for hexagons. Otherwise, one decomposition consists of two pentagons, with equal $B$-values, and the relevant Maslov gradings have the same parity. Hence, the two decompositions cancel in this sum. 

If the associated rectangular decompositions of $\Psi$ are the same, then one of the decompositions consists of two pentagons with different $B$-values, and the other consists of a hexagon and rectangle. Hence, the two decompositions cancel in this sum. 

The final possibility is $\x = \z$; as before the condition that $\psi\cap\X=\emptyset$ forces $\Psi$ to be a thin annulus of multiplicity 1 and width 1. Furthermore, geometrically we see it is vertical. Hence, the degrees of all the decompositions must be 0, and we are reduced to the case proven in [OSS Lemma 15.3.4], which is that $\psi$ is unique and contributes the identity to the sum. That proves the theorem, and thus the commutation and switch invariance of $GHL_S.$

%Either $\psi$ is the juxtaposition of a pentagon with a corner at $b$ with a pentagon with a corner at $a,$ or it contains a hexagon. In both cases, we clearly see that the associated rectangular domain $\Psi$ can be represented in the universal cover by a composite of two rectangles, such that the boundary of $\Psi$ contains an arc in $\beta_i$ passing through $b$ and $a.$ 
\end{proof}

\subsection{Stabilization Invariance}

We now let $\G$ and $\G'$ be grid diagrams with $\G'$ a stabilization of $\G$ of type $X:SW$ (recall that by our generalization of Cromwell's Theorem, it is sufficient to consider this case.) Let $c\in \G'$ be the central point of the stabilization.

We label the 2-by-2 region introduced in the stabilization as follows: $\begin{matrix} X_1 & O_1 \\ & X_2 \end{matrix},$ and let $O_2$ be the $O$-marking sharing a row with $X_2.$

Note that $\mathbf{S}(\G') = \mathbf{I}(\G') \cup \mathbf{N}(\G') ,$ a disjoint union, where $\mathbf{I}(\G)$ are those $\x'\in\mathbf{S}(\G')$ with $c\in\x',$ and $\mathbf{N}(\G')$ are the other grid states.

There is a natural bijection $\mathbf{S}(\G)\rightarrow \mathbf{I}(\G')$ which can be thought of as $\x\mapsto \x\cup\{c\},$ and this bijection extends to a bijection of rectangles from $\x$ to $\y$ in $\mathbf{S}(\G)$ to rectangles from $\x\cup\{c\}$ to $\y\cup\{c\}$ in $\mathbf{I}(\G').$ Fix a sign assignment $S'$ on $\G',$ and define a sign assignment $S$ on $\G$ by pulling back $S'$ on $\mathbf{I}(\G').$

For a chain complex $A,$ let $A[[m, a]]$ denote $A$ with the bigradings shifted by $(+m, +a).$

Let $\mathbf{I}$ be submodule of $GCL_{S'}(\G';\mathbb{Z})$ generated by $\mathbf{I}(\G'),$ and likewise for $\mathbf{N}.$ Then, $\mathbf{I}$ is a quotient complex and $\mathbf{N}$ is a subcomplex. The bijection of $\mathbf{S}(\G)$ and $\mathbf{I}(\G')$ clearly induces an isomorphism of complexes $e: \mathbf{I}\rightarrow GCL_{S}(\G;\mathbb{Z})[[1,1]],$ since any rectangle in $\mathbf{I}$ contributing to $\partial_{S'}$ must not pass through $c,$ lest it also intersect $\X$, hence the $\mathcal{T}$-values of the rectangles are preserved in this correspondence.

%Let $\partial_{\mathbf{I}}^{\mathbf{I}}$ be the portion of $\partial_{S'}$ mapping $\mathbf{I}$ into itself, and likewise $\partial_{\mathbf{N}}^{\mathbf{N}}$ be the portion of $\partial_{S'}$ mapping $\mathbf{N}$ into itself and $\partial_{\mathbf{I}}^{\mathbf{N}}$ be the portion of $\partial_{S'}$ mapping $\mathbf{N}$ into $\mathbf{I}.$

Let $\pi:GCL_{S'}(\G';\mathbb{Z})\rightarrow\mathbf{I}$ denote the projection map.

 Let $C$ be the mapping cone of the map $$V_1-V_2:GCL_{S}(\G;\mathbb{Z})[V_1][[1,1]]\rightarrow GCL_{S}(\G;\mathbb{Z})[V_2].$$ Per [OSS Chapter 5], the homology of $C$ is isomorphic to that of $GCL_{S}(\G;\mathbb{Z}).$ Now, the proof of [OSS Proposition 15.3.5], which only relies on the equation $$\mathcal{H}_{2, S}\circ\partial_S + \partial_S\circ\mathcal{H}_{2, S} = V_1 - V_2$$ that we proved earlier, tells us that the map $D_S:GCL_{S'}(\G';\mathbb{Z})\rightarrow C$ given by $$D_S(\x) = (-1)^{M(\x)}(e(\x), e\circ\pi\circ \mathcal{H}_{2, S}(\x))$$ is a quasi-isomorphism.

That shows the following proposition, completing the proof of invariance of $GHL$ over the integers:

\begin{proposition}
The chain complexes $GCL_{S'}(\G';\mathbb{Z})$ and $GCL_{S}(\G;\mathbb{Z})$ have isomorphic homologies.
\end{proposition}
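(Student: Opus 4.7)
The plan is to chain together two quasi-isomorphisms already assembled in the preceding paragraphs, essentially packaging what has been set up into a single statement.

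The first identification is the quasi-isomorphism $D_S: GCL_{S'}(\G';\mathbb{Z}) \to C$, which I would establish by reproducing the argument of [OSS Proposition 15.3.5]. That argument is purely formal and depends only on three inputs: the short exact sequence $0 \to \mathbf{N} \to GCL_{S'}(\G';\mathbb{Z}) \to \mathbf{I} \to 0$ of chain complexes, the isomorphism $e: \mathbf{I} \to GCL_S(\G;\mathbb{Z})[[1,1]]$, and the homotopy identity $\mathcal{H}_{2,S}\circ\partial_S + \partial_S\circ\mathcal{H}_{2,S} = V_1-V_2$ verified earlier. Each of these is already in hand, so the work reduces to bookkeeping; in particular the $v$-variable tracks correctly because the bijection $\x \mapsto \x\cup\{c\}$ sends rectangles avoiding $c$ to rectangles avoiding $c$ of the same geometric type (short to short, long to long), preserving $\mathcal{T}$.

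The second identification is a standard mapping cone calculation for $C$, the cone of $V_1 - V_2 : GCL_S(\G;\mathbb{Z})[V_1][[1,1]] \to GCL_S(\G;\mathbb{Z})[V_2]$. Since $V_1$ is a freshly adjoined formal variable, multiplication by $V_1 - V_2$ is injective with cokernel naturally identified with $GCL_S(\G;\mathbb{Z})$ (by imposing $V_1 = V_2$). The long exact sequence of the cone then collapses to yield $H_*(C) \cong H_*(GCL_S(\G;\mathbb{Z}))$ as bigraded $\mathbb{Z}[U,v]$-modules, with the bigrading shift $[[1,1]]$ arranged precisely to cancel the shift in $e$. Composing the two identifications gives the proposition.

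The main potential obstacle is verifying that long rectangles are handled consistently across all the maps involved --- that the extended sign assignment, the homotopy operator $\mathcal{H}_{2,S}$, and the projection $\pi$ all contribute the expected $v$-weights and signs on the chain level. However, these compatibilities were the main reason for constructing the extended sign assignment with $S(r) = S(r')$ for each long rectangle and its associated short rectangle, and for bundling $v^{\mathcal{T}(r)}$ uniformly into every map definition. Hence the verification is mechanical rather than conceptually novel, and the proposition follows.
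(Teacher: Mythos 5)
Your proposal matches the paper's argument: the paper likewise composes the quasi-isomorphism $D_S:GCL_{S'}(\G';\mathbb{Z})\to C$ (obtained from [OSS Proposition 15.3.5] using only the short exact sequence, the isomorphism $e$, and the homotopy identity $\mathcal{H}_{2,S}\circ\partial_S+\partial_S\circ\mathcal{H}_{2,S}=V_1-V_2$) with the standard identification $H_*(C)\cong H_*(GCL_S(\G;\mathbb{Z}))$ for the cone of $V_1-V_2$. The only difference is cosmetic: you spell out the cone calculation that the paper delegates to [OSS Chapter 5].
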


This theorem justifies us using the integer invariant $GHL_{S}(K;\mathbb{Z})$ in the remainder of the paper.

\section{Skein Exact Sequence}

The goal of this section is to prove a skein exact sequence for $GHL$ in analogy to the skein exact sequence satisfied by $GH^-.$ We first describe this carefully.

\subsection{Skein Exact Sequence Basics}

The idea is that if three links differ in one crossing as in the below picture (called a skein triple), then we can relate their grid homologies by an exact sequence.

\includegraphics[scale=0.4]{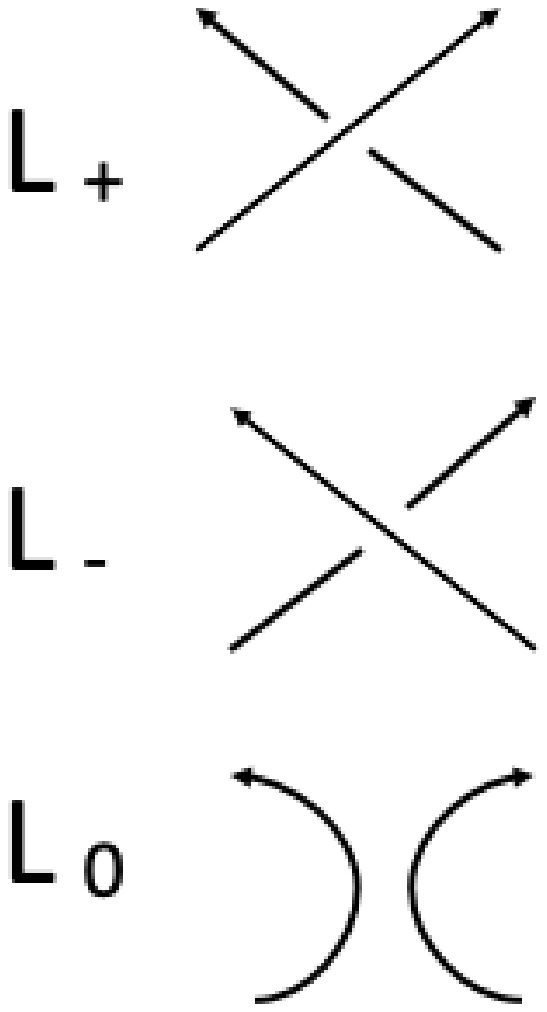}

First, we must develop a version of grid homology for links. 

\begin{definition}[Modified from \text{[OSS Definition 8.2.4]}]
Let $\G$ be a grid diagram representing an $\ell$-component link, and suppose $O_{j_1}, \dots, O_{j_\ell}$ are $O$-markings lying on each component of the link. Then, the \emph{collapsed grid complex} of $\G$ is the complex $$cGC_S^-(\G; \mathbb{Z}) := GC_S^-(\G; \mathbb{Z})/(V_{j_1}=\dots=V_{j_\ell}),$$ with the same differential. The homology of this complex is known as $cGH_S^-(\G; \mathbb{Z}).$

The \emph{collapsed double-point enhanced grid complex} of $\G$ is the complex $$cGCL_S(\G; \mathbb{Z}) := GCL_S(\G; \mathbb{Z})/(V_{j_1}=\dots=V_{j_\ell}),$$ with the same differential. The homology of this complex is known as $cGHL_S(\G; \mathbb{Z}).$
\end{definition}

The proof of the signed version of [OSS Theorem 8.2.5] adapts without variation to the double-point enhanced case to give the following theorem:

\begin{theorem}
For any link, the collapsed double-point enhanced grid complex of a grid diagram representing the link is a link invariant as a bigraded $\mathbb{Z}[U, v]$-module.
\end{theorem}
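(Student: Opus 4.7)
The plan is to mimic the proof of invariance from the knot case (Theorem \ref{integer}), making two adjustments: accounting for the multiple link components and for the collapsing relation $V_{j_1} = \dots = V_{j_\ell}$. By a link version of \text{[OSS Corollary 3.2.3]}, it suffices to verify that $cGHL_S(\G;\mathbb{Z})$ is invariant under commutations, switches, and stabilizations between grid diagrams of the same oriented link. One also needs to show that the homology is independent of the choice of representative $O$-markings $O_{j_1}, \dots, O_{j_\ell}$.

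First, I would show that $cGHL_S(\G;\mathbb{Z})$ is independent of the choice of representatives. This follows directly from the fact, already established via the sign-refined homotopy operator $\mathcal{H}_{i,S}$, that for $O$-markings $O_i$ and $O_j$ lying on the same link component, $V_i$ and $V_j$ are chain-homotopic in $GCL_S(\G;\mathbb{Z})$. Iterating along any component as in the earlier proof for knots, any two collapse-choices produce quasi-isomorphic quotient complexes.

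Next, I would establish commutation and switch invariance. The pentagon and hexagon maps $P_S, P_S', H_S$ constructed in Section 5.1 depend only on the combinatorics of the superimposed diagrams $\G$ and $\G'$, not on the number of link components, so they extend without change. Since commutations and switches do not alter which component each $O$-marking belongs to, these maps descend to the collapsed complexes, and the quasi-isomorphism argument (verified via the relations $P_S \partial_S = \partial_S P_S$ and $H_S \partial_S + \partial_S H_S + P_S P_S' = -\mathrm{Id}$) carries over verbatim.

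For stabilization invariance, I would adapt the mapping cone argument used in Section 5.2. After an $X:SW$ stabilization, the new $O$-markings $O_1$ and $O_2$ always lie on the same link component as the original $O$-marking adjacent to the stabilized $X$, so $V_1$ and $V_2$ become identified in the collapsed complex $cGCL_{S'}(\G';\mathbb{Z})$. The map $D_S$ used in the knot case, built from $e$, $\pi$, and the homotopy operator $\mathcal{H}_{2,S}$, still makes sense after quotienting, and the same homotopy identity $\mathcal{H}_{2,S}\circ\partial_S + \partial_S\circ\mathcal{H}_{2,S} = V_1 - V_2$ continues to supply the required chain-homotopy data, now modulo the collapsing relation. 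The main obstacle is the bookkeeping for how the mapping cone degenerates when $V_1 = V_2$ is forced: one must verify that $D_S$ descends to a quasi-isomorphism on the collapsed quotients, and that the grading shifts work out correctly when $O_1$ is not the chosen representative for its component versus when it is. This is precisely the adaptation of the signed version of \text{[OSS Theorem 8.2.5]} referenced by the author, and checking it reduces to confirming that the various ingredients (homotopy operator, mapping cone, pentagon/hexagon maps) all transform compatibly with the quotient by $V_{j_1} = \dots = V_{j_\ell}$.
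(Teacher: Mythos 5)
Your proposal is correct and follows the same route as the paper, which simply observes that the proof of the signed version of [OSS Theorem 8.2.5] adapts without variation to the double-point enhanced setting; your write-up fleshes out exactly the ingredients that citation relies on (Cromwell's theorem for links, the chain homotopies $V_i \simeq V_j$ for markings on a common component, and the descent of the pentagon/hexagon and mapping-cone quasi-isomorphisms to the collapsed quotient). The only mild caveat is that the standard order of operations is to prove invariance of the uncollapsed complex first and collapse afterward, so the ``degeneration'' of the mapping cone under $V_1 = V_2$ that you flag never actually arises.
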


Now, we are ready to state the theorem we wish to prove. We omit the $\mathbb{Z}$ and $S$ from the notation for convenience. Let $cGHL_m(L, a)$ be the $\mathbb{Z}$-submodule of $cGHL(L)$ consisting of homogenous elements of bidegree $(m,a).$

\begin{theorem}\label{skein}
Let $(L_+, L_-, L_0)$ be an oriented skein triple, with $\ell$ and $\ell_0$ the number of components of $L_+$ and $L_0$ respectively. If $\ell_0=\ell+1,$ then there is a long exact sequence where the maps below fit together to be homomorphisms of $\mathbb{Z}[U, v]$-modules:

$$\to cGHL_m(L_+, s) \to cGHL_m(L_-, s) \to cGHL_{m-1}(L_0, s) \to cGHL_{m-1}(L_+, s) \to$$

Let $J$ be the 4-dimensional bigraded abelian group $J\cong \mathbb{Z}^4$ with one generator in bigrading $(0,1),$ one generator in bigrading $(-2, -1),$ and two generators in bigrading $(-1, 0).$

If $\ell_0=\ell-1,$ then there is a long exact sequence where the maps below fit together to be homomorphisms of $\mathbb{Z}[U, v]$-modules:

$$\to cGHL_m(L_+, s) \to cGHL_m(L_-, s) \to cGHL_{m-1}(L_0, s)\otimes J \to cGHL_{m-1}(L_+, s) \to$$
\end{theorem}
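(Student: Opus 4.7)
The plan is to mirror the proof of the skein exact sequence for $GH^-$ in [OSS Chapter 9], adapting the combinatorial and sign-refined arguments to account for long rectangles and the $v$-variable, leveraging the pentagon/hexagon decomposition technology already developed in Sections 4 and 6.

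First, I would choose grid diagrams $\G_+$, $\G_-$, $\G_0$ for the three links in the skein triple that agree outside a small neighborhood of the relevant crossing. Following OSS, the standard setup places the crossing in two adjacent columns so that $\G_+$ and $\G_-$ differ only in the positions of two $X$-markings in those columns (roughly, an interchange of $X$s), while $\G_0$ is obtained by resolving the crossing, which combinatorially amounts to a particular rearrangement of markings in the same 2-by-2 region. Superimposing the relevant $\beta$-curves and markings as in Section 2.1, I would work on the unified diagram so that pentagon/hexagon counts between states of different diagrams are defined exactly as in Section 4.1.

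Second, I would define pentagon-type chain maps $f_\pm : cGCL_S(\G_+;\mathbb{Z}) \to cGCL_S(\G_-;\mathbb{Z})$, $g_\pm : cGCL_S(\G_-;\mathbb{Z}) \to cGCL_S(\G_0;\mathbb{Z})$, and $h_\pm : cGCL_S(\G_0;\mathbb{Z}) \to cGCL_S(\G_+;\mathbb{Z})$ by weighted counts of (possibly long) pentagons avoiding $\X$, with $v^{\mathcal{T}(p)}$ factors and with signs defined via the associated rectangular domain construction (as in Section 6). The proof that each of these is a chain map, and that appropriate triple compositions vanish or equal chain homotopies built from hexagon counts, will follow the same decomposition-counting strategy used for $P_S \circ P_S'$: for each domain $\psi$ contributing to a composite with $|\x - \x \cap \z|$ equal to 3 or 4, Corollary \ref{pents} together with Lemma \ref{signs} pairs decompositions into sign-cancelling pairs with equal degree. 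Annular contributions ($|\x - \x \cap \z| \le 1$) are controlled by Lemma \ref{rectangles0} and the assumption that each bigon contains an $X$-marking.

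Third, I would assemble these maps into an exact triangle. The cleanest path is to mimic OSS by exhibiting an anti-chain-map whose mapping cone is quasi-isomorphic to $cGCL_S(\G_0;\mathbb{Z})$ (with an appropriate tensor factor in the $\ell_0 = \ell-1$ case), and then appeal to the standard long exact sequence of a mapping cone. For $\ell_0 = \ell+1$, the resolution identifies the two components at the crossing into one, so no extra variable appears and the sequence is direct. For $\ell_0 = \ell-1$, one extra $V_i$ becomes identified with another along the new component of $L_0$; the collapsed complex for $L_0$ inherits the tensor factor isomorphic to $\mathbb{F}[U,v]/(U^2-v)$ (or its bigraded integral analog), whose underlying abelian group is exactly the $J$ of the statement, with generators in the stated bigradings $(0,1)$, $(-1,0)$, $(-1,0)$, $(-2,-1)$ coming from $1$, $U$, $v$, and $Uv = U^2$ (or an equivalent generating set).

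The main obstacle will be verifying the chain homotopy identity that certifies exactness at $cGHL(L_-)$: one must show a triple composition such as $f \circ g \circ h$ is chain-homotopic to multiplication by some unit, using a count of domains built from one pentagon plus the relevant long-rectangle corrections. Controlling signs and $v$-powers simultaneously here—especially in the cases where a long pentagon participates in one decomposition but its partner uses a long rectangle in the other—will require a case analysis exactly parallel to the eight-case spin-group computation in the extended sign-assignment lemma of Section 5, combined with the pentagon-sign formula $S(p) = (-1)^{M(\x)+B(p)} S(R(p))$. Once that bookkeeping is complete, extracting the long exact sequences with the asserted bigrading shifts and the tensor factor $J$ is formal.
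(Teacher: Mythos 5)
Your plan diverges from the paper's argument at the structural level, and the divergence is a genuine gap rather than an alternative route. The paper (following [OSS Chapter 9]) does not build the exact triangle out of three pentagon maps between $\G_+$, $\G_-$, and $\G_0$. It uses \emph{four} diagrams $\G_+,\G_-,\G_0,\G_0'$, splits each complex into $\mathbf{I}/\mathbf{N}$ (resp. $\mathbf{I}'/\mathbf{N}'$) pieces according to whether grid states contain the distinguished points $c$, $c'$, and assembles a commutative square whose columns are shifts of $GCL_S(\G_\pm)$ and whose rows are shifts of $GCL_S(\G_0')$ and $GCL_S(\G_0)$. In particular, the map $cGHL(L_+)\to cGHL(L_-)$ in the sequence is induced by rectangle counts composed with the triangle map $T$, not by a pentagon count: $\G_+$ and $\G_-$ differ by a \emph{cross-commutation}, which is not one of the moves for which the pentagon maps of Section 6 are chain maps, so your maps $f_\pm$ have no reason to exist, and the triangle-detection scheme ($f\circ g\circ h$ homotopic to a unit) that you propose as the "main obstacle" is not the obstacle the proof actually confronts. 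The real work, which your proposal does not engage, is: (i) the computation $\partial^{\mathbf{I}}_{\mathbf{N}}\circ\partial^{\mathbf{N}}_{\mathbf{I}}=V_1+V_2-V_3-V_4$; (ii) the identities $(-1)^{M}P\circ T\circ\partial^{\mathbf{I}'}_{\mathbf{N}'}=h_{X_2}\circ h_Y$ and $(-1)^{M+1}P\circ\partial^{\mathbf{N}}_{\mathbf{I}}\circ T=h_Y\circ h_{X_2}$; and (iii) the homotopy formula $h_{X_2}\circ h_Y+h_Y\circ h_{X_2}+h_{X_2,Y}\circ\partial+\partial\circ h_{X_2,Y}=V_2-V_4$, which together identify the horizontal map of the square, up to the quasi-isomorphism $P$ and homotopy, with multiplication by $V_2-V_4$. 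A genuinely new subtlety in the double-point setting, which any correct proof must address, is that $h_{X_2,Y_i}$ does \emph{not} vanish on $\mathbf{I}'$, unlike its un-enhanced counterpart.

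Your description of $J$ is also incorrect. $J$ is not the underlying group of $\mathbb{Z}[U,v]/(U^2-v)$: that quotient has infinite rank over $\mathbb{Z}$ (it is $\mathbb{Z}[U]$), whereas $J\cong\mathbb{Z}^4$, and $U^2-v$ is not even bihomogeneous (the bidegree of $v$ is $(2,0)$ while that of $U^2$ is $(-4,-2)$), so your proposed generators $1,U,v,Uv$ do not sit in the stated bigradings. The factor $J$ has nothing to do with the variable $v$; exactly as for ordinary collapsed grid homology, it arises from iterated mapping cones of the maps $V_i-V_j$ appearing in (i) and (iii) above, which become null-homotopic on $cGCL(\G_0)$ precisely when the four markings $O_1,\dots,O_4$ lie on a single component of $L_0$ (the case $\ell_0=\ell-1$); each cone of a null-homotopic map splits off a rank-two bigraded factor, and two such splittings produce the rank-four $J$ with generators in bigradings $(0,1)$, $(-1,0)$, $(-1,0)$, $(-2,-1)$.
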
 

\subsection{Proof of the Theorem}

Per [OSS Chapter 9], we may assume that $L_+, L_-,$ and $L_0$ are represented by grid diagrams $\G_+, \G_-,$ and $\G_0,$ respectively, which we picture below along with another diagram $\G_0'$ also representing $L_0.$ The below diagram is borrowed from page 153 of [OSS]. 

\includegraphics[scale=0.8]{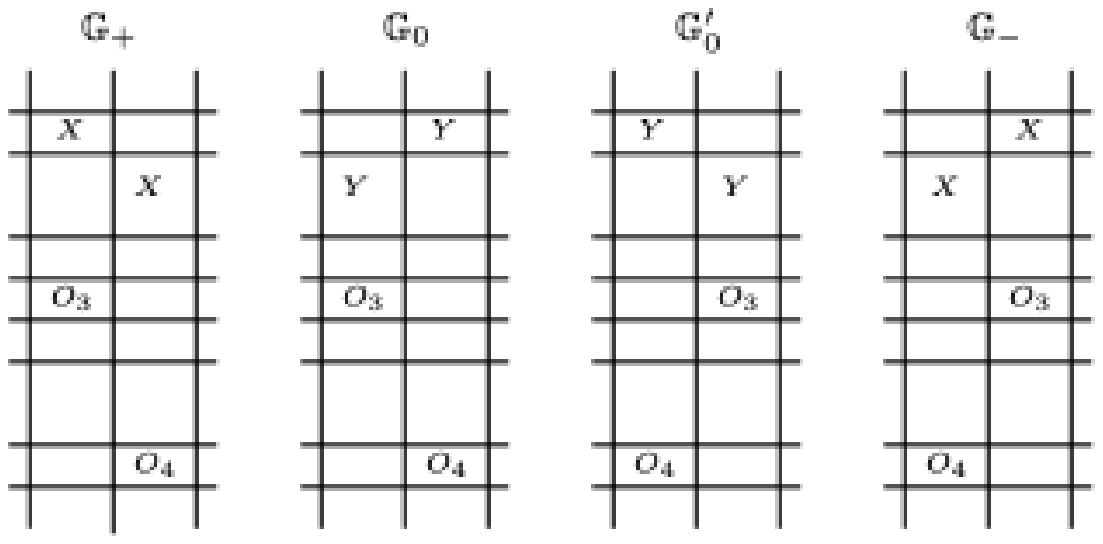}

Below, we reproduce figure 9.3 from [OSS], which depicts all four of these grid diagrams simultaneously, and defines for us two crucial points $c$ and $c'.$

\includegraphics[scale=0.8]{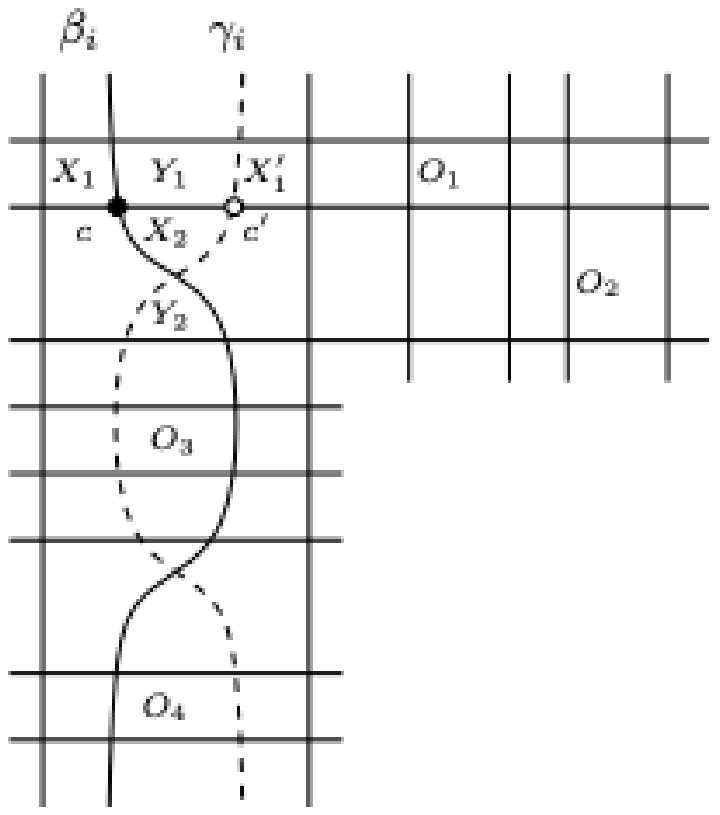}

We partition our four chain complexes, as above, into $\mathbf{I}$ and $\mathbf{N}$ parts depending on whether the grid states contain the marked point $c,$ and $\mathbf{I'}$ and $\mathbf{N'}$ parts depending on whether the grid states contain the marked point $c',$ giving the following descriptions of $GCL_S$ as mapping cones of the following maps counting some of the distinguished squares in the above diagram. (We omit the $;\mathbb{Z}$ for notational simplicity.)

\begin{tabular}{ c | c | c | c | c }
Chain Complex & Quotient Complex & Map & Subcomplex & Map Counts ... \\
\hline
\hline
\hline
$GCL_S(\G_+)$ & $(\mathbf{I}, \partial^{\mathbf{I}}_{\mathbf{I}})$ & $\partial^{\mathbf{N}}_{\mathbf{I}}:\mathbf{I} \rightarrow \mathbf{N}$ & $(\mathbf{N}, \partial^{\mathbf{N}}_{\mathbf{N}})$ & rectangles crossing $Y_1$ or $Y_2$ \\
\hline
$GCL_S(\G_0)$ & $(\mathbf{N}, \partial^{\mathbf{N}}_{\mathbf{N}})$ & $\partial^{\mathbf{I}}_{\mathbf{N}}:\mathbf{N} \rightarrow \mathbf{I}$ & $(\mathbf{I}, \partial^{\mathbf{I}}_{\mathbf{I}})$ & rectangles crossing $X_1$ or $X_2$ \\
\hline
$GCL_S(\G_0')$ & $(\mathbf{I}^{'}, \partial^{\mathbf{I}^{'}}_{\mathbf{I}^{'}})$ & $\partial^{\mathbf{N}^{'}}_{\mathbf{I}^{'}}:\mathbf{I}^{'} \rightarrow \mathbf{N}^{'}$ & $(\mathbf{N}^{'}, \partial^{\mathbf{N}^{'}}_{\mathbf{N}^{'}})$ & rectangles crossing $X_1$ or $X_2$ \\
\hline
$GCL_S(\G_-)$ & $(\mathbf{N}^{'}, \partial^{\mathbf{N}^{'}}_{\mathbf{N}^{'}})$ & $\partial^{\mathbf{I}^{'}}_{\mathbf{N}^{'}}:\mathbf{N}^{'} \rightarrow \mathbf{I}^{'}$ & $(\mathbf{I}^{'}, \partial^{\mathbf{I}^{'}}_{\mathbf{I}^{'}})$ & rectangles crossing $Y_1$ or $Y_2$ \\
\end{tabular}

\begin{definition}
We define the map $T:\mathbf{I}^{'}(\G_0')\rightarrow\mathbf{I}(\G_+)$ by the property that $T(\x) - (T(\x)\cap\beta_i) = \x - (\x\cap\gamma_i).$ (See [OSS p. 155].)
\end{definition}

\begin{lemma}
The identification $T:\mathbf{I}'(\G_0')\rightarrow \mathbf{I}(\G_0)$ extends to an isomorphism of chain complexes over $\mathbb{F}[v, V_1, \dots, V_n].$
\end{lemma}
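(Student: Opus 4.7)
The plan is to show that $T$ is a bijection on generators that identifies the differentials term by term and respects the full $\mathbb{F}[v, V_1, \dots, V_n]$-module structure. The argument is modelled on the corresponding proof for ordinary grid homology in [OSS Chapter 9], with additional checks for the $v$-weights introduced by the double-point enhancement.

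First I would spell out the bijection carefully. Every $\x \in \mathbf{I}'(\G_0')$ contains the central point $c',$ so it is determined by its remaining $n-1$ points, which all lie outside the central crossing region where $\G_0$ and $\G_0'$ differ; outside this region, the $\alpha$-, $\beta$-curves and the $\mathbb{O}$- and $\X$-markings of $\G_0$ and $\G_0'$ agree. The map $T$ keeps these $n-1$ points and replaces $c'$ by $c,$ producing a state in $\mathbf{I}(\G_0).$ Next I would extend $T$ to rectangles: any rectangle counted by $\partial^{\mathbf{I}'}_{\mathbf{I}'}$ goes between states both containing $c',$ so $c'$ is not a corner, and together with $r \cap \X = \emptyset$ and Remark \ref{bigon} this forces the rectangle to lie entirely in the outer region, where it can be viewed unchanged as a rectangle in $\G_0.$ The tuples of $O$-multiplicities and the condition $r \cap \X = \emptyset$ are then preserved, as is the sign $S(r)$ once we define the sign assignment on $\G_0'$ as the pullback of $S$ on $\G_0$ via this identification.

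The main new ingredient, relative to [OSS Chapter 9], is that the $\mathcal{T}$-weights must also be preserved. For a short rectangle, $\mathcal{T}(r) = |\Int(r) \cap \x|,$ and since $c'$ lies outside $\Int(r)$ while the other $n-1$ points of $\x$ and $T(\x)$ are identified, the counts agree. Longness is a property of the underlying support in the universal cover, hence preserved by $T,$ and both long rectangles satisfy $\mathcal{T} = 1.$ Combining these identifications, $T$ carries the formula for $\partial^{\mathbf{I}'}_{\mathbf{I}'}$ on each generator exactly to the formula for $\partial^{\mathbf{I}}_{\mathbf{I}},$ making $T$ a chain map; the $V_i$ and $v$ actions are respected because they depend only on the $\mathbb{O}$-markings outside the central region, which $T$ identifies. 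The main (though quite mild) obstacle is ensuring that longness and $\mathcal{T}$-weights transfer cleanly, particularly when the long portion of a rectangle wraps across the torus through the outer region; since Remark \ref{bigon} prevents a long rectangle from covering either bigon in its entirety, this check is routine.
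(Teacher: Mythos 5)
Your argument is correct and is essentially what the paper's one-line citation to [OSS Lemma 9.2.3] is standing in for: the identification of states and of empty rectangles supported away from the distinguished region, together with the one genuinely new check that the $\mathcal{T}$-weights (hence the $v$-powers) are preserved because contributing rectangles contain neither $c$ nor $c'$ and the remaining $n-1$ points of $\x$ and $T(\x)$ coincide. Two cosmetic remarks: Remark \ref{bigon} is stated for the commutation picture, and in the skein diagrams the same blocking role is played by the markings $X_1, X_2, Y_1, Y_2$ sitting between $\beta_i$ and $\gamma_i$; and your paragraph on long rectangles is moot here, since the differential $\partial_S$ counts only honest (short) rectangles.
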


\begin{proof}
The proof of [OSS Lemma 9.2.3] holds in our situation.
\end{proof}

\begin{lemma}
The maps $\partial^{\mathbf{I}^{'}}_{\mathbf{N}^{'}}\circ\partial^{\mathbf{N}^{'}}_{\mathbf{I}^{'}}:\mathbf{I}^{'}\rightarrow\mathbf{I}^{'}$ and $\partial^{\mathbf{I}^{}}_{\mathbf{N}^{}}\circ\partial^{\mathbf{N}^{}}_{\mathbf{I}^{}}:\mathbf{I}\rightarrow\mathbf{I}$ are both multiplication by $V_1+V_2-V_3-V_4.$
\end{lemma}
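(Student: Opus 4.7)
The plan is to follow the proof of the analogous statement for ordinary grid homology in [OSS Chapter 9] (cf.\ [OSS Lemma 9.2.4]), adapting it to the double-point enhanced setting by tracking $v$-weights through $\mathcal{T}$. I focus on $\partial^{\mathbf{I}}_{\mathbf{N}} \circ \partial^{\mathbf{N}}_{\mathbf{I}}: \mathbf{I} \to \mathbf{I}$; the other claim follows by a symmetric argument with $\G_0'$ and $\G_-$. Expanding $\partial^{\mathbf{I}}_{\mathbf{N}} \partial^{\mathbf{N}}_{\mathbf{I}}(\x)$ on $\x \in \mathbf{I}$, we get a sum over composite domains $\psi = r_1 * r_2 \in \pi(\x, \z)$ with $\z \in \mathbf{I}$, where $r_1$ is a rectangle in $\G_+$ avoiding $\G_+$'s $X$-markings with $c$ as an incoming corner (forcing it to cross $Y_1$ or $Y_2$), and $r_2$ is a rectangle in $\G_0$ avoiding $\G_0$'s $X$-markings with $c$ as an outgoing corner (forcing it to cross $X_1$ or $X_2$). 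Each pair contributes $S(r_1) S(r_2)\, v^{\mathcal{T}(r_1) + \mathcal{T}(r_2)}\, \prod_i V_i^{O_i(\psi)}\, \z$ to the output, and I would reorganize the sum by composite domain $\psi$ and classify by $|\x - \x\cap\z| \in \{0, 3, 4\}$.

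For $|\x - \x\cap\z| \in \{3, 4\}$, I would show the total contribution vanishes by pairing decompositions. Lemmas \ref{rectangles3}, \ref{rectangles4}, and \ref{signs} give each such $\psi$ exactly two rectangular decompositions of equal $\mathcal{T}$-degree and opposite signs. Writing $s_1, s_2$ for the supports of $r_1, r_2$ (which swap in the alternative decomposition), the first is of the required ``$\G_+$-then-$\G_0$'' form iff $s_1$ is disjoint from $\{X_1, X_2\}$ and $s_2$ is disjoint from $\{Y_1, Y_2\}$, and symmetrically for the second; when both are valid they cancel directly. When only one is valid, I pair the offending $\psi$ with a distinct domain $\psi'$ obtained by sliding multiplicity through the 2x2 region between the two factors, mirroring [OSS Lemma 9.2.4]; the crucial new point is that this sliding preserves $\mathcal{T}$ since it only relocates multiplicities across the four distinguished squares, not through interior grid-state points, so the $v$-weights and signs still match.

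For $|\x - \x\cap\z| = 0$, hence $\x = \z$, Lemma \ref{rectangles0} forces $\psi$ to be a thin annulus of width 1 with a unique decomposition. Since $c$ must be incoming for $r_1$ and outgoing for $r_2$, $\psi$ is one of four thin annuli through $c$: two horizontal (the rows immediately above and below $c$) and two vertical (the columns immediately to the left and right of $c$). Each has multiplicity one throughout and no interior $\x$-point, so all $\mathcal{T}$ values vanish and no $v$-factor arises. By sign assignment conditions (2) and (3), the two horizontal annuli contribute $+V_1$ and $+V_2$ while the two vertical ones contribute $-V_3$ and $-V_4$, where the $V_i$ are the $V$-variables of the $O$-markings encountered by $r_1$ and $r_2$ in these annuli; summing gives multiplication by $V_1 + V_2 - V_3 - V_4$ as claimed. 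The main obstacle is the asymmetric subcase in the $|\x - \x\cap\z| \in \{3,4\}$ argument: identifying, for each $\psi$ with only one valid mixed decomposition, the correct sliding partner $\psi'$ so that both the signs and the $v$-weights of the unique decompositions of $\psi$ and $\psi'$ match for cancellation. Once this combinatorial matching is verified, the remainder is a routine translation of the ordinary grid homology proof.
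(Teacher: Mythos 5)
Your final annulus count is right, but the bulk of your argument is aimed at cases that never occur, and the one step you flag as ``the main obstacle'' is precisely where your proof has a genuine hole. The key observation you are missing is the following rigidity: in any contributing juxtaposition $\psi=r_1*r_2$ with $r_1\in\Rect(\x,\y)$, $r_2\in\Rect(\y,\z)$, we have $c\in\x$, $c\notin\y$, $c\in\z$, so $c$ is an incoming corner of $r_1$ and an outgoing corner of $r_2$. Writing $c=\alpha_i\cap\beta_j$, the rectangle $r_1$ moves the points of $\x$ at $c$ and at some $\alpha_k\cap\beta_l$ to $\alpha_i\cap\beta_l$ and $\alpha_k\cap\beta_j$; since $r_2$ must return a point to $c$, its two incoming corners are forced to be the points of $\y$ on $\alpha_i$ and on $\beta_j$, namely exactly the two outgoing corners of $r_1$, and its outgoing corners are then $c$ and $\alpha_k\cap\beta_l$. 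Hence $\z=\x$ always: the cases $|\x-\x\cap\z|=3,4$ are empty, and the whole lemma reduces to your final paragraph (the composite is an annulus through $c$, thin and of multiplicity one to avoid the remaining $X$-markings, giving the four terms $V_1+V_2-V_3-V_4$). This is how the paper argues, following [OSS Lemma 9.2.4], whose proof is exactly this annulus argument and contains no ``sliding'' step.

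By contrast, your treatment of the (vacuous) cases $|\x-\x\cap\z|\in\{3,4\}$ would not stand on its own if those cases did occur. Lemmas \ref{rectangles3}, \ref{rectangles4}, and \ref{signs} pair the two decompositions of a fixed domain, but they say nothing about whether both decompositions satisfy the mixed marking constraints ($Y$-crossing first, $X$-crossing second); when only one does, you propose to cancel against a \emph{different} domain $\psi'$ obtained by ``sliding multiplicity through the $2\times2$ region,'' but no such pairing is constructed, and you would additionally have to verify that it preserves every $O_i$-exponent, the $\mathcal{T}$-degree, and reverses the sign --- none of which is addressed. Replace that discussion with the one-sentence observation that $\x=\z$ is forced, and your proof becomes essentially the paper's.
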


\begin{proof}
We do $\partial^{\mathbf{I}}_{\mathbf{N}}\circ\partial^{\mathbf{N}}_{\mathbf{I}}:\mathbf{I}\rightarrow\mathbf{I}$ first. 

As in the proof of [OSS Lemma 9.2.4], we proceed by a now-familiar rectangle counting argument. Consider any juxtaposition of rectangles contributing to the left-hand side of the equation $$\partial^{\mathbf{I}^{}}_{\mathbf{N}^{}}\circ\partial^{\mathbf{N}^{}}_{\mathbf{I}^{}} = V_1+V_2-V_3-V_4.$$

This is a rectangle from $\x\in\mathbf{I}(\G_+)$ to $\y\in\mathbf{N}(\G_+)$ (where we count rectangles going through $Y$ markings) and then a rectangle from $\y\in\mathbf{N}(\G_+)$ to $\z\in\mathbf{I}(\G_+)$ (where we count rectangles going through $X$ markings). Thus, $\y$ must not contain $c,$ but $\z$ does. The only possibility is thus that $\x = \z,$ and the composite of these two rectangles must be an annulus. The annulus must be width one since otherwise it would intersect $\X-\{X_1, X_2, Y_1, Y_2\},$ and similarly must be multiplicity 1.

There are 4 annuli, and since they are all thin, they are empty; the $O$-markings they pass through are $O_i$ for $i=1,2,3,4,$ giving $V_1+V_2-V_3-V_4$ since the $V_3$ and $V_4$ terms correspond to vertical annuli whereas the others correspond to horizontal annuli.

The other case uses the same decomposition of rectangles.
\end{proof}

We get the following commutative square:

\begin{tikzcd}[sep = huge]
\mathbf{I}^{'}\arrow{r}{\partial^{\mathbf{N}^{'}}_{\mathbf{I}^{'}}}\arrow{d}{\partial^{\mathbf{N}^{}}_{\mathbf{I}^{}}\circ T} &\mathbf{N}^{'}\arrow{d}{-T\circ\partial^{\mathbf{I}^{'}}_{\mathbf{N}^{'}}}\\
\mathbf{N}\arrow{r}{\partial^{\mathbf{I}^{}}_{\mathbf{N}^{}}} &\mathbf{I} \\
\end{tikzcd}

\begin{lemma}[Modified from \text{[OSS Lemma 9.2.5]}]
Let $M_0, A_0$ be the bigradings on $\G_0$ (whose grid states may be naturally identified with those of $\G_+$) and $M_0,' A_0'$ be the bigradings on $\G_0'$ (whose grid states may be naturally identified with those of $\G_-$.) Endow $\mathbf{I}'$ and $\mathbf{N}'$ with bigradings $M_0'$ and $A_0'+\frac{\ell_0-\ell-1}{2}$ and endow $\mathbf{I}$ and $\mathbf{N}$ with bigradings $M_0+1$ and $A_0+\frac{\ell_0-\ell+1}{2}.$

Then, in the above square, the following holds:
\begin{itemize}
	\item Each edge map is homogenous of bidegree $(-1, 0).$
	\item The left column is isomorphic as a bigraded chain complex over $\mathbb{F}[V_1,\dots, V_n, v]$ to $GCL_S(\G_+)[[-1, 0]].$
	\item The left column is isomorphic as a bigraded chain complex over $\mathbb{F}[V_1,\dots, V_n, v]$ to $GCL_S(\G_-).$
	\item The top row is isomorphic as a bigraded chain complex over $\mathbb{F}[V_1,\dots, V_n, v]$ to $GCL_S(\G_0')[[0,\frac{\ell_0-\ell+1}{2}]].$
	\item The bottom row is isomorphic as a bigraded chain complex over $\mathbb{F}[V_1,\dots, V_n, v]$ to $GCL_S(\G_0)[[-1,\frac{\ell_0-\ell-1}{2}]].$
\end{itemize}
\end{lemma}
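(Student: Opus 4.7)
The plan is to verify each of the five bullets separately, following the strategy of [OSS Lemma 9.2.5] and checking that the $v$-enhancement does not obstruct the argument.

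For the first bullet, I would apply the relative Maslov and Alexander formulas from the grading proposition in Section 2. Every map in the square counts rectangles $r$ with $r\cap\X=\emptyset$, contributing $v^{\mathcal{T}(r)}V_1^{O_1(r)}\cdots V_n^{O_n(r)}$ to the image of $\x$. Since only ordinary (non-long) rectangles appear in $\partial_S$, we have $\mathcal{T}(r)=|\x\cap\Int(r)|$, so the $+2\mathcal{T}(r)$ from the $v$-shift exactly cancels the $+2|\x\cap\Int(r)|$ in the Maslov relative grading, leaving a net Maslov change of $-1$; analogously, the $-|r\cap\mathbb{O}|$ in the Alexander relative grading cancels the $-|r\cap\mathbb{O}|$ from the $V_i$'s, leaving Alexander change $0$. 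The same applies to the compositions with $T$ since $T$ is a bigraded $\mathbb{F}[v,V_1,\dots,V_n]$-isomorphism by the preceding lemma.

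For the four isomorphism claims, I would observe that each row or column, viewed as the mapping cone of its displayed arrow, reproduces exactly one of the four grid complexes (up to $T$). Indeed, the left column is by construction $GCL_S(\G_+)$ with $\mathbf{I}$ replaced by $\mathbf{I}'$ via the iso $T$; the right column is $GCL_S(\G_-)$; the top row is $GCL_S(\G_0')$; the bottom row is $GCL_S(\G_0)$. The substantive content is to verify the bigrading shifts, and this reduces to comparing the bigrading on a grid state $\x$ considered in $\G_+$ versus $\G_0$ (respectively $\x'\in\mathbf{S}(\G_-)$ in $\G_-$ versus $\G_0'$). These comparisons use only the $\mathcal{J}$-formulas, which differ across the pair of diagrams only in a small neighborhood of the crossing.

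The main obstacle is tracking the $A$-grading shift, where the factors $\frac{\ell_0-\ell\pm 1}{2}$ appear for two reasons. First, the normalization constant $-\frac{n-1}{2}$ in the definition of $A$ generalizes to $-\frac{n-\ell}{2}$ for an $\ell$-component link, so passing from $\G_+$ (with $\ell$ components) to $\G_0$ (with $\ell_0$) induces a uniform shift of $\frac{\ell_0-\ell}{2}$; second, the local rearrangement of $\mathbb{X}$- and $\mathbb{O}$-markings near the crossing gives an additional contribution of $\pm\frac{1}{2}$ via the $\mathcal{J}(\x,\mathbb{O})-\mathcal{J}(\x,\mathbb{X})$ terms. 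The Maslov shift in the left column is accounted for analogously by the local $\mathbb{O}$-marking move. Once these shifts are verified exactly as in [OSS Lemma 9.2.5], nothing extra is needed on the $v$-side, since $\mathcal{T}(r)=|\Int(r)\cap\x|$ depends only on interior intersections, and these are manifestly preserved under both $T$ and the natural identification of $\G_+$-states with $\G_0$-states.
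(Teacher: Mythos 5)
Your proposal is correct and follows the same route as the paper, which simply asserts that the proofs of [OSS Lemmas 9.2.3 and 9.2.5] carry over verbatim. You in fact supply more detail than the paper does, and you correctly isolate the one genuinely new point — that the bigrading $(+2k,0)$ of $v^k$ together with $\mathcal{T}(r)=|\x\cap\Int(r)|$ exactly absorbs the $2|\x\cap\Int(r)|$ term in the relative Maslov formula, and that $\mathcal{T}$ is preserved under $T$ and the state identifications — so the classical grading-shift computation goes through unchanged.
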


\begin{proof}
The proofs of [OSS Lemma 9.2.3] and [OSS Lemma 9.2.5] carry over identically in this situation.
\end{proof}

The following corollary is immediate.

\begin{corollary}
The map $$(-1)^M(\partial^{\mathbf{N}^{}}_{\mathbf{I}^{}}\circ T-T\circ\partial^{\mathbf{I}^{'}}_{\mathbf{N}^{'}}):GCL_S(\G_0')\rightarrow GCL_S(\G_0)$$ is a chain map of $\mathbb{F}[v, V_1, \dots, V_n]$-modules homogeneous of degree $(-2, -1).$
\end{corollary}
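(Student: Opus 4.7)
The plan is to verify the corollary by unpacking the (anti)commuting square just displayed; both claims boil down to bookkeeping using the square, the $T$-chain-isomorphism lemma, and the preceding lemma on $\partial^{\mathbf{I}}_{\mathbf{N}}\partial^{\mathbf{N}}_{\mathbf{I}}$, which is why the corollary is labelled immediate.

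Write $\Phi' := \partial^{\mathbf{N}}_{\mathbf{I}}\circ T - T\circ\partial^{\mathbf{I}'}_{\mathbf{N}'}$, so that the corollary's map is $\Phi = (-1)^M \Phi'$. Because each differential has Maslov degree $-1$, the $(-1)^M$ trick converts "$\Phi$ is a chain map" into the anti-commutation identity $\Phi'(\partial_{\G_0'}\mathbf{x}) + \partial_{\G_0}\Phi'(\mathbf{x}) = 0$, which I would verify separately on the $\mathbf{I}'$ and $\mathbf{N}'$ summands.

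For $\mathbf{i}'\in\mathbf{I}'$, expanding $\partial_{\G_0'}\mathbf{i}' = \partial^{\mathbf{I}'}_{\mathbf{I}'}\mathbf{i}' + \partial^{\mathbf{N}'}_{\mathbf{I}'}\mathbf{i}'$ and $\partial_{\G_0}(\partial^{\mathbf{N}}_{\mathbf{I}}T\mathbf{i}') = (\partial^{\mathbf{N}}_{\mathbf{N}} + \partial^{\mathbf{I}}_{\mathbf{N}})\partial^{\mathbf{N}}_{\mathbf{I}}T\mathbf{i}'$ yields four compositions that split into two vanishing pairs. The pair $\partial^{\mathbf{N}}_{\mathbf{I}}T\partial^{\mathbf{I}'}_{\mathbf{I}'}\mathbf{i}' + \partial^{\mathbf{N}}_{\mathbf{N}}\partial^{\mathbf{N}}_{\mathbf{I}}T\mathbf{i}'$ vanishes by applying $T\partial^{\mathbf{I}'}_{\mathbf{I}'} = \partial^{\mathbf{I}}_{\mathbf{I}}T$ (from the $T$-isomorphism lemma) and then the $\mathbf{I}$-component of $\partial_{\G_+}^2=0$, namely $\partial^{\mathbf{N}}_{\mathbf{I}}\partial^{\mathbf{I}}_{\mathbf{I}} + \partial^{\mathbf{N}}_{\mathbf{N}}\partial^{\mathbf{N}}_{\mathbf{I}}=0$; the pair $\partial^{\mathbf{I}}_{\mathbf{N}}\partial^{\mathbf{N}}_{\mathbf{I}}T\mathbf{i}' - T\partial^{\mathbf{I}'}_{\mathbf{N}'}\partial^{\mathbf{N}'}_{\mathbf{I}'}\mathbf{i}'$ vanishes by the preceding lemma, since both summands equal $(V_1+V_2-V_3-V_4)T\mathbf{i}'$ (the second via $\mathbb{F}[V_1,\dots,V_n]$-linearity of $T$). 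For $\mathbf{n}'\in\mathbf{N}'$ only two terms appear, and they collapse by the same intertwining $T\partial^{\mathbf{I}'}_{\mathbf{I}'} = \partial^{\mathbf{I}}_{\mathbf{I}}T$ together with the $\mathbf{N}'$-component of $\partial_{\G_-}^2=0$, namely $\partial^{\mathbf{I}'}_{\mathbf{N}'}\partial^{\mathbf{N}'}_{\mathbf{N}'}+\partial^{\mathbf{I}'}_{\mathbf{I}'}\partial^{\mathbf{I}'}_{\mathbf{N}'}=0$.

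The bidegree $(-2,-1)$ is then pure bookkeeping: by the preceding lemma, $\mathbf{I}'$ sits shifted by $(0,(\ell_0-\ell-1)/2)$ relative to the native $GCL_S(\G_0')$-bigrading, $\mathbf{N}$ sits shifted by $(1,(\ell_0-\ell+1)/2)$ relative to the native $GCL_S(\G_0)$-bigrading, and each vertical edge of the square is of bidegree $(-1,0)$; composing the two shifts around the edge map yields $(-2,-1)$, and the same arithmetic applies to the $\mathbf{N}'\to\mathbf{I}$ component of $\Phi$. $\mathbb{F}[v,V_1,\dots,V_n]$-linearity is inherited from the constituent maps, together with the fact that $(-1)^M$ is a constant on each homogeneous piece. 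The main (mild) subtlety is that two separate instances of $\partial^2=0$—one for $\G_+$ and one for $\G_-$—are needed, one per case, and they must be paired correctly with the right pieces of the expansion; this pairing is precisely what the two-case split accomplishes.
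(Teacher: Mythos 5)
Your proposal is correct and is precisely the verification the paper leaves implicit when it declares the corollary ``immediate'': the two-case expansion over $\mathbf{I}'$ and $\mathbf{N}'$, the cancellations via the $\mathbf{I}\to\mathbf{N}$ component of $\partial^2_{\G_+}=0$ and the $\mathbf{N}'\to\mathbf{I}'$ component of $\partial^2_{\G_-}=0$, the $T$-intertwining, and the $V_1+V_2-V_3-V_4$ lemma (where the explicit minus sign in $\Phi'$ is what makes the cancellation work over $\mathbb{Z}$ rather than only mod $2$) are exactly the intended ingredients. Your bidegree bookkeeping, computed from the endowed gradings $(M_0',A_0'+\tfrac{\ell_0-\ell-1}{2})$ on the top row and $(M_0+1,A_0+\tfrac{\ell_0-\ell+1}{2})$ on the bottom row together with the $(-1,0)$ internal degree of the edge maps, correctly yields $(-2,-1)$.
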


Note that the mapping cone of this map is precisely the above commutative square.

\subsection{Defining the New Maps}
Define for $\x \in\mathbf{S}(\G_0),$ the map:
$$P(\x) = \sum_{\y'\in\mathbf{S}(\G_0')}\sum_{\{p\in\Pent^*(\x, \y') \big| p\cap\Y  = \emptyset \}} S(p)v^{\mathcal{R}(p)} V_1^{O_1(p)}\dots V_n^{O_n(p)} \y'.$$

And, for $i=1,2,$ and $\x \in\mathbf{S}(\G_0'),$ the maps:
$$h_{X_2}(\x) = \sum_{\y\in\mathbf{S}(\G_0')}\sum_{\{r\in\Rect^*(\x, \y) \big| r\cap(\Y\cup\X) = X_2\}} S(r)v^{\mathcal{R}(r)} V_1^{O_1(r)}\dots V_n^{O_n(r)} \y.$$

$$h_{Y_i}(\x) = \sum_{\y\in\mathbf{S}(\G_0')}\sum_{\{r\in\Rect^*(\x, \y) \big| r\cap\Y = Y_i, Y_i(r)=1, X_2(r)=0\}} S(r)v^{\mathcal{R}(r)} V_1^{O_1(r)}\dots V_n^{O_n(r)} \y.$$

$$h_{X_2, Y_i}(\x) = \sum_{\y\in\mathbf{S}(\G_0')}\sum_{\{r\in\Rect^*(\x, \y) \big| r\cap\Y = Y_i, Y_i(r)=1, X_2(r)\geq1\}} S(r)v^{\mathcal{R}(r)} V_1^{O_1(r)}\dots V_n^{O_n(r)} \y.$$

Let $h_Y = h_{Y_1} + h_{Y_2}$ and $h_{X_2, Y} = h_{X_2, Y_1} + h_{X_2, Y_2}$.

Further define, for a domain $\psi,$ the quantity $Y(\psi) = Y_1(\psi)+Y_2(\psi).$

\begin{lemma}
The map $P:GCL_S(\G_0)\rightarrow GCL_S(\G_0')$ is a quasi-isomorphism of bigraded chain complexes.
\end{lemma}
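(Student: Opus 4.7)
The plan is to mimic the commutation/switch invariance argument of the previous subsection. The grid diagrams $\G_0$ and $\G_0'$ both represent $L_0$ and are superimposed with bigons bounded by corresponding $\beta_i$ and $\gamma_i$ curves in exactly the configuration used for commutations and switches; the only essential difference is that the forbidden marking set in defining $P$ is $\Y$ rather than the full set $\X$, but $X_1$ and $X_2$ sit inside the relevant bigons so the substance of Remark \ref{bigon} still applies.

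First I would verify that $P$ is bigraded of the asserted bidegree. This is an immediate consequence of the relative Maslov and Alexander grading formulas combined with Lemma \ref{pentagon_grading}, using that $\mathcal{T}(\Psi)=\mathcal{T}(\psi)$ so the $v$-weight is compatible with the Maslov shift induced by swapping $\beta_i$ for $\gamma_i$.

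Second, I would show $P\circ\partial_S - \partial_S\circ P = 0$. Expanding gives a sum over domains $\psi\in\pi(\x,\z')$ with $\psi\cap\Y=\emptyset$, ranging over $D_{rp}(\psi)\cup D_{pr}(\psi)$, and the task is to pair terms that cancel. Split by $|\x-\x\cap\z'|\in\{1,3,4\}$: for the $4$ and $3$ cases, Corollary \ref{pents} produces exactly two decompositions of equal degree, corresponding to the two rectangular decompositions of the associated domain $\Psi$ furnished by Lemmas \ref{rectangles4} and \ref{rectangles3}. By property (1') of the sign assignment these rectangular decompositions satisfy $S(R(p))S(r) = -S(R(p'))S(r')$, and the pentagon sign convention $S(p)=(-1)^{M(\x)+B(p)}S(R(p))$ supplies the single compensating sign flip needed (coming from opposite parity of intermediate Maslov gradings on shared support when $B$-values agree, or vice-versa when the two decompositions live over the same rectangular decomposition of $\Psi$). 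For $|\x-\x\cap\z'|=1$, the associated rectangular domain $\Psi$ is a thin annulus by Lemma \ref{rectangles0}; the constraint $\psi\cap\Y=\emptyset$ together with Remark \ref{bigon} forces it into the configuration of \mbox{[OSS Lemma 15.3.3]}, which again cancels.

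Third, to upgrade to a quasi-isomorphism I would define the companion pentagon map $P':GCL_S(\G_0')\to GCL_S(\G_0)$ and a hexagon map $H_S:GCL_S(\G_0)\to GCL_S(\G_0)$ exactly as in the commutation invariance proof, assigning their signs via associated rectangular domains, and verify $H_S\circ\partial_S+\partial_S\circ H_S + P'\circ P = -\mathrm{Id}$ together with its mirror on the other side. The domain-counting is parallel: pentagon-pentagon and hexagon-rectangle decompositions cancel in pairs for $|\x-\x\cap\z|\in\{3,4\}$ by Corollary \ref{pents} and the $B$/Maslov-parity analysis, while for $\x=\z$ Lemma \ref{rectangles0} combined with the $\Y$-avoidance restriction forces the associated rectangular domain to be a thin vertical annulus of multiplicity $1$, recovering the identity contribution exactly as in \mbox{[OSS Lemma 15.3.4]}. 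The main obstacle I anticipate is the sign bookkeeping across the four composite decomposition types — rectangle-pentagon, pentagon-rectangle, hexagon-rectangle, and pentagon-pentagon — and in particular confirming that the asymmetric convention $S(p)=(-1)^{M(\x)+B(p)}S(R(p))$ versus $S(p)=(-1)^{M(\y)+B(p)}S(R(p))$ (depending on the direction of the pentagon) really interacts with Lemma \ref{signs} and the $B$-values on common supports so as to produce the cancellations claimed. Once that sign ledger is in order, the remainder is a routine application of the rectangle decomposition lemmas already at our disposal.
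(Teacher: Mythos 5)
Your argument is correct and is essentially the paper's own route: the paper disposes of this lemma in one line by observing that $\G_0$ and $\G_0'$ differ by a commutation, so $P$ is exactly the pentagon quasi-isomorphism already established in the commutation/switch invariance theorem of the previous section, whose proof is precisely the bigrading check, chain-map cancellation, and hexagon-homotopy argument you have reconstructed. The only difference is that you inline that proof (with the correct observation that avoiding $\Y$ rather than $\X$ is harmless because the relevant markings sit in the bigons) where the paper simply cites it.
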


\begin{proof}
This is immediate from the commutation invariance of $GCL_S$ as proven in the previous section. Alternatively, the unsigned version of this lemma is proven in [RWW Propositions 9, 10, 11]. (This paper uses a slightly different-looking definition for the map $P$, allowing pentagons to be long without being thin, however these pentagons always contribute 0 since they contain intersections with $\X,$ hence the two maps are in fact identical.)
\end{proof}

\begin{remark}
$h_{X_2, Y_i}$ does not vanish on $\mathbf{I}^{'}.$ This is in stark contrast to the map $h_{X_2, Y_i}$ defined in [OSS Chapter 9] in the un-enhanced case.
\end{remark}

\begin{lemma}
\begin{enumerate}
	\item $h_{X_2}$ vanishes on $\mathbf{N}^{'}$ and maps $\mathbf{I}^{'}$ to $\mathbf{N}^{'}$.
	\item $h_{Y}$ vanishes on $\mathbf{I}^{'}$.
\end{enumerate}
\end{lemma}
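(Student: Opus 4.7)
The plan is to argue both parts by a short geometric analysis of the local picture of $\G_0'$ near the crossing given in OSS Figure 9.3, together with the observation that the constraints defining $h_{X_2}$ and $h_Y$ force the marked point $c'$ to appear as a specific type of corner of the contributing rectangle.

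For part 1, first I would translate the condition $r\cap(\X\cup\Y) = X_2$ into a geometric constraint on the rectangle $r$: the square containing $X_2$ must lie in the interior of $r$, while the squares containing $X_1$, $Y_1$, $Y_2$, and every other $X$-marking must not. Looking at Figure 9.3, the $X_2$-square is one of the four squares meeting at $c'$, and the adjacent squares across $c'$ in the two remaining directions contain $X_1$ and one of the $Y_i$'s (with the fourth containing an $O$-marking). A brief case analysis on the placement of $r$'s boundary then shows that the only way to enclose $X_2$ while avoiding $X_1$, $Y_1$, and $Y_2$ is for $c'$ itself to be a corner of $r$; moreover, since the $X_2$-square lies diagonally across $c'$ from the corner of $r$ at $c'$, that corner is an \emph{incoming} corner (i.e., an element of $\x$). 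This immediately gives both statements in part 1: if $\x\in\mathbf{N}'$ then $c'\notin\x$ and so no such rectangle exists, while if $\x\in\mathbf{I}'$ then the corresponding outgoing grid state $\y$ has $c'$ replaced by the diagonally opposite corner, so $\y\in\mathbf{N}'$.

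For part 2, I would do the analogous analysis for the conditions $r\cap\Y = Y_i$, $Y_i(r)=1$, $X_2(r)=0$. Here $r$ must contain the $Y_i$-square (with multiplicity one) and avoid both the $X_2$-square and the $Y_{3-i}$-square. In the local picture, $Y_i$ sits in a square one of whose corners is $c'$, and the $X_2$-square and $Y_{3-i}$-square are precisely the squares one would pass through if $r$ extended past $c'$ into the "wrong" quadrants. The only surviving rectangles are those with $c'$ as an \emph{outgoing} corner; hence $c'\notin\x$, so $\x\in\mathbf{N}'$, and $h_Y$ vanishes on $\mathbf{I}'$.

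I expect the main subtlety to be bookkeeping the long rectangles in $\Rect^*$ rather than just ordinary rectangles. Lifting to the universal cover, a long rectangle contributing to $h_{X_2}$ (respectively $h_Y$) would wrap once around the torus and still needs to avoid all other $\X\cup\Y$ markings; by Remark \ref{bigon} and the hypothesis on its intersection with $\X$, such a long rectangle's support is severely constrained, and the same corner analysis at $c'$ applies to its lift. Once this is verified, both statements are direct consequences of the picture.
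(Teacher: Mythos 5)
Your argument is correct and is precisely the reasoning the paper compresses into its one-line proof (``both statements follow immediately from the multiplicity conditions''): the local picture at $c'$ forces any rectangle counted by $h_{X_2}$ to have $c'$ as an initial corner, and forces any rectangle counted by $h_{Y}$ to avoid having $c'$ in its initial state, and you correctly flag that long rectangles cannot contribute to $h_{X_2}$ since a thin annular support must meet a forbidden marking. The one small over-claim is in part 2: a rectangle counted by $h_{Y_i}$ need not have $c'$ as a corner at all ($c'$ may lie in the interior of one of its edges), but in that case the point of $\x$ on the circle containing that edge is one of the actual corners of $r$, so $c'\notin\x$ still holds and the conclusion is unaffected.
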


\begin{proof}
Both statements follow immediately from the multiplicity conditions on the rectangles in the definitions of the maps.
\end{proof}

\begin{lemma}
\begin{enumerate}
	\item Suppose $r$ is a rectangle contributing to the sum in $h_{X_2}$. Then $r$ is not long, and $Y_1(r)=Y_2(r)=0.$
	\item Suppose $r\in\Rect^*(\x, \y)$ is a rectangle contributing to the sum in $h_{X_2, Y_i}.$ Then, $\y\in\mathbf{N}(\G_0'),$ so as a consequence, the image of $h_{X_2, Y_i}$ is within $\mathbf{N}^{'}.$
\end{enumerate}
\end{lemma}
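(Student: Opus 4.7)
The plan is to dispatch the two items separately by direct geometric analysis of the 2-by-2 region near the crossing in $\G_0'$, as drawn in Figure 9.3.

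For item 1, the condition $r \cap (\X \cup \Y) = X_2$ built into the definition of $h_{X_2}$ is a set-theoretic equality of subsets of the torus, so $Y_1$ and $Y_2$ lie outside the support of $r$; this gives $Y_1(r) = Y_2(r) = 0$ at once. To rule out $r$ being long, I would use the structural fact that the support of any long rectangle is an entire row or entire column of the grid (with some cells at multiplicity $2$ and others at multiplicity $1$). From Figure 9.3, $X_2$ shares its row with one of the markings $Y_1, Y_2$ and shares its column with the other, so any long rectangle whose support includes $X_2$ also contains a $Y$-marking; this contradicts $r \cap (\X \cup \Y) = X_2$. Hence $r$ is short.

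For item 2, a rectangle $r \in \Rect^*(\x, \y)$ contributing to $h_{X_2, Y_i}$ has both $X_2$ and $Y_i$ in its support with multiplicity at least one. The plan is to show, by case analysis on $i$ and on the shape of $r$, that $c'$ must appear on the boundary of $r$ as an incoming corner, i.e., an element of $\x \setminus \y$, and never as an outgoing corner. The reason is that $c'$ is positioned at the corner common to the cells of $X_2$ and $Y_i$ in Figure 9.3, and the orientation convention on rectangles (boundary traversed clockwise from $\x$ to $\y$) forces the side carrying $X_2$ and $Y_i$ to be the incoming side at $c'$. Granting this, two cases close the argument: if $c' \in \x$, then $c'$ is used up as an incoming corner and so $c' \notin \y$; if $c' \notin \x$, then $c'$ cannot become an outgoing corner, so again $c' \notin \y$. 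Either way $\y \in \mathbf{N}'$, and this is exactly the claim that the image of $h_{X_2, Y_i}$ lies in $\mathbf{N}'$.

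The main obstacle I anticipate is the case analysis in item 2: I must check, over the finitely many shapes $r$ can take (for each $i$, and short versus long), that $c'$ really is an incoming corner. This is a finite check but sensitive to the boundary-orientation conventions and to the precise multiplicity conditions $X_2(r) \geq 1$ and $Y_i(r) = 1$; it will require careful reading of Figure 9.3 and of the local picture of $\beta_i, \gamma_i$ near $c'$.
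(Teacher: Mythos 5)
Your treatment of item 1 is correct and is essentially the argument the paper leaves implicit in its one-line proof: the set-theoretic condition $r\cap(\Y\cup\X)=X_2$ kills $Y_1(r)$ and $Y_2(r)$ outright, and a long rectangle would sweep out the entire row or column of $X_2$, which contains one of $Y_1,Y_2$, contradicting that same condition.

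For item 2 there is a genuine gap: the intermediate statement you plan to prove --- that $c'$ always appears as an incoming corner of a contributing rectangle --- is false. A rectangle contributing to $h_{X_2,Y_i}$ need not have a corner at $c'$ at all. For instance, take the rectangle sitting immediately on the $X_2$-side of the horizontal circle through $c'$ whose support contains both the $X_2$-square and the $Y_i$-square adjacent to it: then $c'$ lies in the \emph{interior} of one of its edges, so $c'$ is neither an incoming nor an outgoing corner, and the initial state $\x$ of such a rectangle does not contain $c'$ (this is consistent with, and related to, the paper's remark that $h_{X_2,Y_i}$ behaves differently from the un-enhanced version regarding the $\mathbf{I}'/\mathbf{N}'$ splitting). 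Consequently your closing dichotomy does not close the argument: the dangerous case is $c'\in\x$ with $c'$ \emph{not} a corner of $r$, in which case $c'\in\y$ automatically and the lemma would fail; your case ``$c'\in\x$'' silently assumes $c'$ is a corner, and your case ``$c'\notin\x$'' asserts without proof that $c'$ cannot be an outgoing corner, which is exactly the nontrivial content.

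The repair requires two observations beyond your plan. First, since the $X_2$-square lies in the support of $r$ and has $c'$ as one of its corners, $c'$ lies in the closure of $r$; if $c'\in\Int(r)$ then all four distinguished squares meeting at $c'$ lie in $r$, contradicting $Y_{3-i}(r)=0$; and if $c'$ lies in the interior of an edge of $r$, then $c'$ cannot be the (unique) point of $\x$ or of $\y$ on the circle carrying that edge, since those points are corners of $r$ --- so $c'\notin\x\cup\y$ in that case. Second, when $c'$ \emph{is} a corner one should use the local identity $n_{NE}(c')-n_{NW}(c')+n_{SW}(c')-n_{SE}(c') = \mathds{1}[c'\in\x]-\mathds{1}[c'\in\y]$, valid for any domain in $\pi(\x,\y)$; at $c'$ the four quadrant multiplicities are exactly $X_1(r),X_2(r),Y_1(r),Y_2(r)$, and the conditions $Y_i(r)=1$, $Y_{3-i}(r)=0$, $X_2(r)\geq1$ force the left side to be nonnegative, so $c'$ cannot be an outgoing corner. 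Note that the direction of this inequality depends on which quadrant of $c'$ contains $X_2$ and on the paper's orientation convention for incoming corners (the paper's assertion elsewhere that an $h_{X_2}$-rectangle has its southwest corner at $c'$ pins both down), so the sensitivity to conventions you flagged is real --- but the resolution is this local multiplicity count, not the claim that $c'$ is always an incoming corner.
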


\begin{proof}
Both statements follow immediately from the multiplicity conditions on the $Y_i$ in the definitions of the maps.
\end{proof}

Now, we must prove two key lemmas:

\begin{lemma}
The following two identities hold: $$(-1)^MP\circ T\circ \partial^{\mathbf{I}^{'}}_{\mathbf{N}^{'}} = h_{X_2}\circ h_Y$$ and $$(-1)^{M+1}P\circ \partial^{\mathbf{N}^{}}_{\mathbf{I}^{}}\circ T = h_{Y}\circ h_{X_2}$$.
\end{lemma}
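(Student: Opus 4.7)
The plan is to verify each identity by a domain-counting argument in the style of Proposition \ref{diff}: expand both sides as sums over composite domains $\psi$ paired with decompositions, then establish a sign-preserving bijection between the decompositions appearing on each side.

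For the first identity, fix $\x \in \mathbf{S}(\G_0')$ and consider a domain $\psi$ contributing nontrivially to either side. Such $\psi$ must satisfy $\psi \cap \X \subseteq \{X_2\}$ with $X_2$-multiplicity at least $1$, and must cross exactly one of $Y_1, Y_2$ with multiplicity $1$. On the LHS, contributions correspond to decompositions of $\psi$ as a rectangle $r$ in $\G_0'$ crossing $Y_i$, followed (after applying $T$) by a pentagon $p$ in $\G_0$ with $p \cap \X = \emptyset$. Since the relevant bigon between $\beta_i$ and $\gamma_i$ contains $X_2$ (by Remark \ref{bigon}), the associated rectangular domain $R(p)$ is a rectangle crossing $X_2$. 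Hence, via the $T$-identification and the associated rectangular domain construction, the LHS decompositions correspond to decompositions of $\psi$ into two rectangles in $\G_0'$, one crossing $Y_i$ with $X_2$-multiplicity zero and one crossing $X_2$. These are exactly the RHS decompositions for $h_{X_2} \circ h_Y$.

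For the signs, expanding the LHS sign of a decomposition $r \ast p$ gives $(-1)^{M(\x)} S(r) S(p) = (-1)^{M(\x) + M(T(\y)) + B(p)} S(r) S(R(p))$, where $\y$ is the intermediate grid state. Using that $M(\x) - M(\y)$ is odd (from the Maslov formula for rectangles) together with $M(\y) - M(T(\y)) = -1 + 2|t \cap \mathbb{O}|$ where $t$ is a triangle in the bigon containing $X_2$ (Lemma \ref{pentagon_grading}), the sign simplifies to $S(r) S(R(p))$, which matches the RHS sign $S(r_1') S(r_2')$ for the paired decomposition by the definition of a sign assignment.

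The main obstacle will be the sign bookkeeping in the edge cases of Corollary \ref{pents}: when $|\x - \x \cap \z'| = 3$, a single $\psi$ may admit two LHS decompositions whose associated rectangular decompositions of $\Psi$ coincide but whose pentagons differ in $B$-value, and these must be matched with RHS decompositions (which correspond bijectively to the two rectangular decompositions of $\Psi$ given by Lemma \ref{rectangles3}). I expect that a change in $B$-value is exactly compensated by a corresponding parity change in the intermediate Maslov grading; verifying this in each geometric configuration, together with tracking the $\mathcal{T}$-contributions when long pentagons or long rectangles appear, will constitute the bulk of the case analysis. The second identity $(-1)^{M+1} P \circ \partial^{\mathbf{N}}_{\mathbf{I}} \circ T = h_Y \circ h_{X_2}$ is proven by an entirely analogous argument with the composition order reversed; the shift from $(-1)^M$ to $(-1)^{M+1}$ exactly compensates for the reversal, since the intermediate grid state now sits in $\mathbf{S}(\G_0)$ rather than $\mathbf{S}(\G_0')$ and its Maslov grading parity differs accordingly.
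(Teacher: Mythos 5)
Your treatment of the first identity is essentially the paper's argument: the triangle contributed by $T$ concatenates with the pentagon contributed by $P$ to form the rectangle counted by $h_{X_2}$, giving a decomposition-by-decomposition bijection, and the sign bookkeeping via Lemma \ref{pentagon_grading} and the definition of $S(p)$ is the right computation. But there are two problems. First, your identified ``main obstacle'' is the wrong one: these identities are not proved by the pairwise-cancellation machinery of Corollary \ref{pents} and Lemma \ref{rectangles3} (that machinery shows a single domain's two decompositions cancel in a sum that equals zero). Here each side is a specific composite of maps with rigid multiplicity constraints, so each contributing domain admits an essentially unique decomposition on each side, and the task is to match them one-to-one with equal degree and equal sign. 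The scenario you worry about --- two LHS decompositions of one $\psi$ with coinciding associated rectangular decompositions --- does not arise in this form.

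Second, and more seriously, the second identity is not ``entirely analogous with the composition order reversed,'' and dismissing it this way leaves the bulk of the proof missing. In $P\circ\partial^{\mathbf{N}}_{\mathbf{I}}\circ T$ the triangle from $T$ comes \emph{first} and is separated from the pentagon by the rectangle of $\partial^{\mathbf{N}}_{\mathbf{I}}$, so the concatenation trick that drives the first identity is unavailable. The paper instead analyzes the local multiplicities of $\psi = t*r*p$ at the point $c'$ and splits into cases according to whether $c'$ is a $270$-degree corner, a $180$-degree corner, a $360$-degree corner, or an interior point of $\psi$; the $180$- and $360$-degree cases force $\x=\z$ and produce thin horizontal and vertical annuli that must be matched against the corresponding annulus contributions of $h_Y\circ h_{X_2}$, while in the $270$-degree cases the correct cut of $\psi$ into $r_1*r_2$ is along $\alpha_j$ or $\gamma_i$ depending on whether $c'$ lies on an edge of $r$ or of $p$, with a separate degree-matching argument in each subcase. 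None of this is recoverable from the first identity by symmetry, so your proof as written establishes only the first equation.
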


\begin{proof}
We start with the first equation. Let $\psi = r*t*p$ be a domain contributing to the left-hand side. Here, since the image of $T$ is in $\mathbf{I}^{'},$ a pentagon $p$ contributing to $P$ in the left-hand side has an outgoing corner at $c,$ therefore geometrically we see $X_2(p)=0$ lest $Y_i(p)\geq0$ for either $i=1,2.$ Now, concatenating $p$ with the triangle $t$ contributing to $t$ therefore gives a rectangle $r_2$ with an outgoing corner at $c',$ with $X_2(r_2)=1,$ and $Y_1(r_2)=Y_2(r_2)=0.$ Furthermore, the image of $\partial^{\mathbf{I}^{'}}_{\mathbf{N}^{'}}$ is some rectangle $r=r_1$ with an incoming corner at $c'$, and with $Y(r_1) = 1.$ Hence, the composite $\psi = r_1*r_2$ is a decomposition appearing in the right-hand side. Conversely, for $\psi = r_1*r_2$ a decomposition appearing in the right-hand side, we have that the rectangle $r_1=r$ contributes to $ \partial^{\mathbf{I}^{'}}_{\mathbf{N}^{'}},$ and cutting $r_2$ along the portion of $\beta_i$ passing through $X_2$ gives us a decomposition $r_2 = t*p$ such that $t$ contributes to $T$ and $p$ to $P$ in the left-hand side. Furthermore, these two decompositions always have the same degree since the interiors of the domains in each decomposition differ only on the edge of the small triangle $t$ which is part of $\beta_i$; however, the only point on $\beta_i$ that could possibly affect the degrees of the decompositions is an outgoing corner of $p.$ Hence, either $\mathcal{T}(p)=\mathcal{T}(r_2)=1$ or else $\mathcal{T}(p)=\mathcal{T}(r_2)=0$. Furthermore, by the definition of a sign assignment for a pentagon, it is clear that both decompositions have the same sign.

Now, we consider the second equation. Let $\psi = t*r*p$ be a domain contributing to the left-hand side. We see geometrically that if $X_2(p)\geq1,$ then it must be the case that either $Y_1(p)$ or $Y_2(p)$ is positive. Since we specify in the equation defining $P$ that $Y(p)=0,$ we must have $X_2(p)=0.$ Furthermore, it is not possible for $r$ and $p$ to overlap in a small neighborhood of $c',$ since this would require $Y_1(r)=1$ and then $Y(p)>0.$ Hence, all local multiplicities are $\leq1$ around $c'.$ Suppose $c'$ were a 90-degree corner. Then, we must have $Y_2(r)=1,$ and $p$ must be a left pentagon. However, geometrically, the only way this is possible is if $Y_2(p)\geq1,$ a contradiction.

Also note that $X_2(r) = 0$ by the fact that $c$ is an outgoing corner and $r$ cannot be long. In all cases, then, we have $X_2(\psi) = Y(\psi) = 1.$

Thus, $c'$ is either a 270-degree corner, a 180-degree corner, or in the interior of $\psi$. In all three cases we may represent $\psi$ by an L-shaped region $Q$ in the universal cover such that a pre-image $C'$ of $c'$ is either a 270-degree corner, a 180-degree corner, or in the interior of $\psi$. Indeed, start with $t$ which has a corner at $C',$ and then attach pre-images $R$ and $P$ of $r$ and $p$ such that if $r$ or $p$ has a corner at $c'$ then $R$ or $P$ has a corner at $C'.$

We must also show that these three possibilities are also the only three possibilities for a domain contributing to the right-hand side. Suppose $\psi = r_1*r_2$ where $r_1$ is from $h_{X_2}^+$ and $r_2$ is from $h_Y.$ Then, $r_1$ has a corner at $c',$ namely the southwest corner, and no others since $Y(r_1)=0.$ Then, $r_2$ must contain precisely one of $Y_1$ or $Y_2,$ but $X_2(r_2)=0,$ so either $r_2$ has a corner at $c',$ leaving $\psi$ a 180-degree corner at $c'$ (which clearly forces $\psi$ to be an annulus), or it has an edge passing through $c',$ leaving $\psi$ a 270-degree corner at $c'.$ Note there is no 360-degree case; we will return to this at the end of the proof.

We will first show that in each case $\psi$ has two decompositions of equal degree; then, we will appeal to a proof from [OSS] to show that these two decompositions further have the same sign.

In the first 270-degree case, either $c'$ is on an edge of $r$ or on $p.$ If $c'$ is on an edge of $r,$ which is part of horizontal circle, say, $\alpha_j,$ then cutting $\psi$ along $\alpha_j$ gives a decomposition $\psi = r_1*r_2.$ Since $r_1$ clearly has $c$ as an outgoing corner and contains $X_2,$ and we know $X_2(\psi)=X_2(r)+X_2(t)+X_2(p)=0+1+0=1,$ that tells us $X_2(r_1)=1.$ Furthermore, in this case the support of $r_1$ is a subset of the support of $p,$ hence $Y(r_1)=0.$ Thus, $r_1$ contributes to $h_{X_2}^+$; likewise, we must have $Y(r_2)=Y(\psi)=1,$ so $Y$ contributes to $h_Y^+.$ Conversely, if $\psi = r_1*r_2,$ with $Y_1(r_2)=1,$ then performing this same cut along $\alpha_j$ gives a decomposition as in the left-hand side. 

The proof of lemma \ref{rectangles3} ensures that the two decompositions $r_1*r_2$ and $r_1'*r_2'$ of $\psi$ as rectangles in $GC^+(\G_0')$ have the same degree. One of these decompositions is $r_1*r_2,$ and the other $p*r*t$ differs from $r_1'*r_2'$ by the introduction of the small triangle $t$ and by cutting at $\beta_i$ instead of $\gamma_i$. No points on $\beta_i$ or $\gamma_i$ can contribute to the degrees of any of these decompositions, for on the left-hand side, $p$ and $r$ border $\beta_i$ and on the right-hand side, $r_1$ and $r_2$ border $\gamma_i.$ Thus, both decompositions $r_1*r_2$ and $p*r*t$ have the same degree.

Similarly, if $c'$ is on an edge of $p,$ then this edge is part of $\gamma_i,$ and we cut $\psi$ along $\gamma_i$ to get a decomposition $r_1*r_2.$ In this case, $r_1$ is the union of $t$ and a portion of $r$ not containing $Y_2$; hence, $X_2(r_1)=1$ and $Y(r_1)=0,$ so $r_1$ contributes to $h_{X_2}^+.$ Similarly, $r_2$ must contribute to $h_Y^+$ since we must have $Y(r_2)=1$. Conversely, if $\psi = r_1*r_2,$ with $Y_2(r_2)=1,$ then performing this same cut along $\gamma_i$ gives a decomposition as in the left-hand side. Likewise, these two decompositions differ  by the introduction of the small triangle $t$ and by cutting at $\beta_i$ instead of $\gamma_i$, so they also must have the same degree by the argument in the previous paragraph.

In the 180-degree case on the left-hand side, we must have $Y_2(r)=1,$ and since $r$ is not long, the condition that there is a 180-degree corner at $c'$ forces $p$ to have an incoming corner at $c',$ and also for $\x = \z$; hence, the region $\psi$ must be a horizontal annulus, and it must be thin to avoid intersecting with $\Y-\{Y_1, Y_2\}$. Furthermore, since the local multiplicities of $c',$ which is an incoming corner of $p,$ are all $\leq1,$ we must have that the annulus has multiplicity 1. There is clearly a unique decomposition of this annulus into two rectangles $r_1, r_2$ such that $X_2(r_1)=Y(r_2)=1$ and $Y(r_1)=X_2(r_2)=0.$ Conversely, if $\psi = r_1*r_2$ has a 180-degree corner at $c',$ either we are in the horizontal case, in which case there is a unique decomposition $\psi = t*r*p$ as in the previous paragraph, or else we are in the vertical case (see the following paragraph); this case is unique for each grid state $\x$. Since the annulus is thin, all constituent polygons must be empty so the degrees of all decompositions are 0.

There is one case remaining for the left-hand side and one case remaining for the right-hand side. For the left-hand side, this case is that $c'$ is a 360-degree corner, in which case we must have $Y_1(r)=1$ and therefore that $c'$ is an incoming corner at $p$; again, this forces the initial and final grid states to be equal, so we have a vertical annulus, and it is unique for each grid state $\x.$ The remaining case for the right-hand side is a vertical annulus as well. Both annuli are thin, so have degree 0, and contribute 1 power of $V_4$ to the formula. 

What's left to show is that each decomposition in every case has the same sign. Since each of these possibilities is identical to those in the proof of [OSS Theorem 15.5.1], this proof guarantees that each decomposition has the same sign. Indeed, it only uses the defining properties of sign assignments and the relation between the sign of a pentagon and its associated rectangle, all of which carry over to the case where the pentagons and rectangles are potentially long.

This concludes the proof.
\end{proof}

\begin{lemma} The map $h_{X_2, Y}$ provides a chain homotopy from $h_{X_2}\circ h_Y + h_{Y}\circ h_{X_2}$ to multiplication by $V_2-V_4$, i. e. the following equation holds:
$$h_{X_2}\circ h_Y + h_{Y}\circ h_{X_2} + h_{X_2, Y}\circ \partial + \partial\circ h_{X_2, Y} = V_2 - V_4.$$
\end{lemma}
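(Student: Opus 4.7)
The plan is to expand the left-hand side as a signed sum over pairs $(\psi, d)$, where $\psi \in \pi(\x, \z)$ is a domain with $Y(\psi) = 1$, $X_2(\psi) \geq 1$, and $\psi \cap (\X \setminus \{X_1, X_2\}) = \emptyset$, and $d$ is a decomposition $\psi = r_1 * r_2$ of one of four types matching the four terms on the LHS: (i) $(h_Y, h_{X_2})$, (ii) $(h_{X_2}, h_Y)$, (iii) $(\partial, h_{X_2, Y})$, and (iv) $(h_{X_2, Y}, \partial)$. The goal is then to use the rectangle decomposition lemmas (Lemmas \ref{rectangles4}, \ref{rectangles3}, \ref{rectangles0}) together with their sign-refined version \ref{signs} to pair up these decompositions so that everything cancels except two surviving annulus contributions that sum to $V_2 - V_4$.

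For $|\x - \x\cap\z| = 4$ or $3$, Lemmas \ref{rectangles4} and \ref{rectangles3} produce exactly two rectangular decompositions of such a $\psi$ of equal degree, which by Lemma \ref{signs} carry opposite signs. I would verify that the partner decomposition always lies within one of the four allowed types: in the Lemma \ref{rectangles4} case, the partner swaps the supports of $r_1$ and $r_2$, so a type (i) decomposition pairs with type (ii) (the $Y$-carrying and $X_2$-carrying rectangles exchange order) and type (iii) pairs with type (iv) (the rectangle carrying both markings exchanges with the empty $\partial$-rectangle); whether $Y$ and $X_2$ end up in the same or different rectangles of $r_1 * r_2$ dictates which of the two pairings applies. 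When one rectangle is long it must be the $h_{X_2, Y}$-slot, and the L-shape analysis underlying Corollary \ref{pents} guarantees the partner also sits in one of the four allowed types, so opposite signs cancel these contributions.

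For $\x = \z$, Lemma \ref{rectangles0} forces $\psi$ to be a thin annulus of multiplicity $1$ with a unique rectangular decomposition; multiplicity-$\geq 2$ annuli are ruled out because they would force $Y(\psi) \geq 2$. The constraints $Y(\psi) = 1$ and $X_2(\psi) \geq 1$ single out exactly two annuli: the horizontal thin annulus through the row containing $X_2$ (which also contains $O_2$ and a unique $Y_i$), and the vertical thin annulus through the column containing $X_2$ (containing $O_4$ and a unique $Y_j$). The horizontal annulus decomposes into an $h_Y$-rectangle and an $h_{X_2}$-rectangle (and the reverse), and by the horizontal-annulus axiom of the sign assignment its signed contribution is $+V_2$; by the vertical-annulus axiom the vertical annulus contributes $-V_4$, giving $V_2 - V_4$ exactly.

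The most delicate step will be handling long rectangles in the $h_{X_2, Y}$-slot when $X_2(\psi) \geq 2$: a priori the partner decomposition could split the two $X_2$-multiplicities between the two rectangles, producing a composition like $(h_{X_2, Y}, h_{X_2})$ that is not among the four LHS types and would leave uncancelled contributions. I expect that the combination of $Y(\psi) = 1$, the geometric placement of $X_2, Y_1, Y_2$ relative to the bigons bounded by $\beta_i$ and $\gamma_i$ (cf.\ Remark \ref{bigon}), and the requirement that a long rectangle in $h_{X_2, Y}$ wraps around exactly once, rules out such configurations. Pinning this down — essentially adapting the rectangle count in [OSS Theorem 15.5.1] to the enhanced setting while tracking $\mathcal{T}$-degrees through every pairing to match $v$-weights — will be the core technical content of the proof.
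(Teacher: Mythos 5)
Your outline follows the paper's proof essentially step for step: the same trichotomy on $|\x - \x\cap\z|$, cancellation via Lemmas \ref{rectangles4}, \ref{rectangles3} and \ref{signs} in the cases $3$ and $4$, and the two thin multiplicity-one annuli through $X_2$ producing $V_2 - V_4$ in the case $\x = \z$. The one place where the proposal stops short of a proof is exactly the step you yourself flag as ``the core technical content'': verifying that each of the two opposite-sign decompositions of a contributing domain lands in exactly one of the four composite types on the left-hand side, so that they genuinely cancel. As written, you assert that the problematic configurations are ``expected'' to be ruled out; that expectation is correct, but it is the substance of the argument, not a detail.

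The paper closes this gap by tabulating the decompositions $\psi = r_1 * r_2$ according to the triple $(X_2(\psi), X_2(r_1), Y(r_1))$ and matching each admissible row to exactly one of the four terms. The configurations you worry about are excluded by two observations. First, a rectangle with $X_2(r) > 1$ is necessarily long and thin, hence wraps once around the row or column through $X_2$, which contains a $Y$-marking; so $X_2(r) > 1$ forces $Y(r) \geq 1$, and the only a priori bad row of the table ($X_2(\psi) > 1$, $X_2(r_1) = 0$, $Y(r_1) = 1$, which would force $X_2(r_2) > 1$ and $Y(r_2) = 0$) cannot occur. Second, the earlier structural lemmas --- $h_{X_2}$ vanishes on $\mathbf{N}'$ while the image of $h_{X_2,Y}$ lies in $\mathbf{N}'$ --- kill composites such as $h_{X_2}\circ h_{X_2,Y}$ outright, so no decomposition of an eligible domain contributes to a composite absent from the statement. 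Supplying this explicit multiplicity table together with the $\mathbf{I}'/\mathbf{N}'$ bookkeeping is what turns your outline into a complete proof.
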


\begin{proof}
Let $\psi\in\pi(\x, \z)$ be a region contributing to the left-hand side of this equation with $|D(\psi)|>0.$ We have three cases, $|\x - (\x\cap \z)| = $0, 3, or 4.

If $|\x - (\x\cap \z)| = 4,$ then Lemma \ref{rectangles4} shows that $|D(\psi)|=2,$ and both decompositions have the same degree and opposite signs. Hence, this case clearly contributes 0 to the sum on the left-hand side.

If $|\x - (\x\cap \z)| = 3,$ then Lemma \ref{rectangles3} shows that $|D(\psi)|=2,$ and both decompositions have the same degree and opposite signs, since only 1 rectangle in each possible decomposition of $\psi$ could be long. We must show that both decompositions appear in the above formula, and do so exactly once. By analyzing the above formula, we conclude that $Y(\psi) = 1$ and $X_2(\psi)\geq1.$ Both rectangles in both decompositions clearly have trivial intersections with $\Y - \{Y_1, Y_2\}$. Hence, the only possibilities can be found by considering multiplicities of $X_2$ and $Y_i,$ and are listed below. Note that in the first two cases, $r_1$ is not forced; it could be from either $h_{X_2}$ or $\partial,$ but this choice is forced when we consider $r_2.$

\begin{tabular}{ c | c | c | c | c }
$X_2(\psi)$ & $X_2(r_1)$ & $Y(r_1)$ & $r_1$ & $r_2$ \\
\hline
\hline
\hline
1 & 1 & 0 & $h_{X_2}$ & $h_Y$ \\
\hline
$>1$ & 1 & 0 & $\partial$ & $h_{X_2,Y}$ \\
\hline
$\geq1$ & 1 & 1 & $h_{X_2, Y}$ & $\partial$ \\
\hline
$\geq1$ & 0 & 0 & $\partial$ & $h_{X_2,Y}$ \\
\hline
$1$ & 0 & 1 & $h_Y$ & $h_{X_2}$ \\
\end{tabular}

Finally, I claim the case $X_2(\psi)>1$, $X_2(r_1)=0$, and $Y(r_1)=1$ cannot occur. Indeed, this forces the other rectangle $r_2$ to satisfy $X_2(r_2)>1$ but also $Y(r_2)=0$ which is impossible.

Hence, this case $|\x - (\x\cap \z)| = 3$ also contributes 0 to the sum.

If $|\x - (\x\cap \z)| = 0,$ then we must have $\psi$ is an annulus containing $X_2,$ and it must be thin since $Y(\psi)=1$ and $\psi\cap(\Y - \{Y_1, Y_2\}) = \emptyset.$ Since $Y(\psi)=1,$ it must have multiplicity 1. Then, clearly, $\psi$ has a unique decomposition into two rectangles, and the possibilities in the table in the previous case still apply to show that this decomposition appears precisely once in the sum on the left-hand side. If the annulus is horizontal, it gives us multiplication by $V_2,$ and if it is vertical, we get multiplication by $-V_4.$
\end{proof}

The remainder of the proof of the skein exact sequence proceeds exactly as in [OSS Chapter 9], which gives us the theorem. $\Box$

\section{Alternatives to $\tau$}

In this section, we discuss constructions relating to and generalizing the $\tau$ invariant. We first recall the definition of $\tau.$

\begin{definition}
For a knot $K,$ $\tau(K)$ is -1 times the maximum Alexander grading of a homogeneous nontorsion element in $GH^-(K).$
\end{definition}

Note that for the remainder of the paper, the coefficient field will be $\mathbb{F}$ unless otherwise stated. We use a field for coefficients to maximize simplicity here, although much of the below can be generalized to $\mathbb{Z}$ coefficients without too much difficulty.

\subsection{The endomorphism $\partial_1$}

[L page 9] briefly mentions the fact that $\partial_1$ is a chain map. Let's recall why. The differential is defined as: $\partial(\x) = \sum_{k=0}^\infty v^k\partial_k(\x).$ In proving that $\partial^2 = 0,$ we may expand out this sum and conclude that each coefficient of $v^k$ is 0. This gives: $$\partial_0^2 = 0,$$ $$\partial_0\partial_1+\partial_1\partial_0 = 0,$$ $$\partial_0\partial_2+\partial_1^2+\partial_2\partial_0 = 0,$$ and so on.

The second equality tells us that $\partial_1:GC^-(\G)\rightarrow GC^-(\G)$ is a chain map on the unblocked grid homology, so it induces a homomorphism $\partial_{1*}:GH^-(\G)\rightarrow GH^-(\G).$ Furthermore, the third equation tells us that $\partial_1^2$ is nullhomotopic, hence $\partial_{1*}$ is a differential on $GH^-(\G)$ and we can consider its homology. 

The conjecture is that $\partial_{1*}$ is the 0 homomorphism, but we have yet to find a nulhomotopy for it. The author has attempted to try using certain $L$-shaped regions as a count; this fails.

\begin{theorem}
The endomorphism $\partial_{1*}:GH^-(\G)\rightarrow GH^-(\G)$ is a knot invariant (i.e., for a given knot, it does not depend on the choice of grid presentation $\G$.)
\end{theorem}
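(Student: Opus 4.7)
The plan is to deduce the knot invariance of $\partial_{1*}$ from the already-established knot invariance of $GCL$. By the mod 2 version of Theorem \ref{integer}, proven in Sections 5.1 and 5.2 via explicit chain homotopy equivalences built out of pentagons, hexagons, and mapping cones, for any two grid diagrams $\G, \G'$ of the same knot $K$ there exists an $\mathbb{F}[V_1, \ldots, V_n, v]$-linear chain homotopy equivalence $F : GCL(\G) \to GCL(\G')$ together with a chain homotopy inverse $G$. Concatenating the invariance maps along a Cromwell sequence (Theorem \ref{cromwell}) produces such $F$ and $G$ for any pair of grid diagrams of $K$.

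Because $F$ is $v$-linear, it is determined by its restriction to $GC^-(\G) \subset GCL(\G)$, which expands uniquely as $F|_{GC^-(\G)} = \sum_{k \geq 0} v^k F_k$ with each $F_k : GC^-(\G) \to GC^-(\G')$ an $\mathbb{F}[V_1, \ldots, V_n]$-linear map. Expanding the chain map condition $\partial' \circ F = F \circ \partial$ on a grid state and collecting coefficients of $v^j$ yields
$$\sum_{k+l = j} \partial_k' \circ F_l \;=\; \sum_{k+l = j} F_l \circ \partial_k, \qquad j \geq 0.$$
The $j = 0$ equation says $F_0$ is a chain map with respect to the plain grid homology differential $\partial_0$. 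Applying the identical extraction to $G$ and to a chosen chain null-homotopy exhibiting $G F \simeq \text{Id}_{GCL(\G)}$ gives $G_0 F_0 \simeq \text{Id}_{GC^-(\G)}$, and symmetrically $F_0 G_0 \simeq \text{Id}_{GC^-(\G')}$, so $F_0$ descends to an isomorphism $(F_0)_* : GH^-(\G) \xrightarrow{\sim} GH^-(\G')$.

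The $j = 1$ equation rearranges to
$$F_0 \circ \partial_1 \;-\; \partial_1' \circ F_0 \;=\; \partial_0' \circ F_1 \;-\; F_1 \circ \partial_0,$$
which exhibits $F_1$ as a $\partial_0$-chain homotopy between $F_0 \partial_1$ and $\partial_1' F_0$. Passing to homology we obtain $(F_0)_* \circ \partial_{1*} = \partial_{1*}' \circ (F_0)_*$, so the pair $(GH^-(\G), \partial_{1*})$ is carried by the isomorphism $(F_0)_*$ to $(GH^-(\G'), \partial_{1*}')$, which is exactly the claim.

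The main thing to verify is that the invariance-move maps constructed earlier really do assemble into a $v$-linear chain homotopy equivalence admitting a $v$-linear null-homotopy for $G F - \text{Id}$, so that the coefficient extraction above is meaningful at the $v^1$ level. For commutation and switch invariance this is essentially immediate: the maps $P_S, P_S', H_S$ and the relation $H_S \partial_S + \partial_S H_S + P_S \circ P_S' = -\text{Id}$ of Section 5.1 are all manifestly $v$-linear. For stabilization invariance, the mapping cone quasi-isomorphism $D_S$ of Section 5.2 is also $v$-linear, and a $v$-linear chain homotopy inverse is produced by a standard mapping cone argument (or by tracking the inverse through the proof of [OSS Proposition 15.3.5]). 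Once this $v$-linear bookkeeping is in hand, the argument is formal.
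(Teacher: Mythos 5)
Your proposal is correct and follows essentially the same route as the paper: the key step in both is extracting the $v^1$ coefficient of the chain-map equation for the ($v$-linear) invariance maps, which exhibits the $v^1$ part as a $\partial_0$-homotopy between $F_0\circ\partial_1$ and $\partial_1'\circ F_0$ (the paper writes this as $\partial_0\circ P_1+P_1\circ\partial_0+\partial_1\circ P+P\circ\partial_1=0$ for the commutation map, and argues analogously for the homotopy operator appearing in the destabilization map). The only organizational difference is that you package all the moves uniformly as a $v$-linear homotopy equivalence (requiring the standard upgrade of the stabilization quasi-isomorphism to a homotopy equivalence of free, bounded-below complexes), whereas the paper handles the mapping-cone step for stabilization directly; both are sound.
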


\begin{proof}
By Theorem \ref{cromwell}, it is sufficient to show that $\partial_{1*}$ is preserved under commutation and SW-destabilization maps.

For the commutation maps, let $\G$ and $\G'$ differ by a commutation. We know by our proof of invariance that there is a chain map $P_S:(GCL_S(\G), \partial)\rightarrow(GCL_S(\G'), \partial)$ such that the coefficient of $v^0$ of $P_S$, when restricted modulo 2, is precisely the quasi-isomorphism $P:(GC^-(\G), \partial_0)\rightarrow(GC^-(\G'), \partial_0)$ from [OSS] Section 5.1. Let $P_L$ be the restriction of $P_S$ modulo 2. Let $P_1$ be the $v^1$ term of $P_L.$ Then, the $v^1$ term of the equation $$\partial\circ P_L + P_L\circ\partial = 0,$$ which expresses that $P_L$ is a chain map, is: $$\partial_0\circ P_1 + P_1\circ\partial_0+\partial_1\circ P + P\circ\partial_1 = 0,$$ which tells us that $\partial_1$ commutes with $P$ up to homotopy. Hence, $\partial_{1*}$ is preserved under commutation.

For SW-destabilization, if $\G'$ differs from $\G$ by a SW-destabilization, [OSS Lemma 5.2.17] gives us a quasi-isomorphism from $GC^-(\G)$ to the mapping cone of the function $V_2-V_1:GC^-(\G')[V_1][[1,1]]\rightarrow GC^-(\G')[V_1].$ Using notation from [OSS Section 5.2], the quasi-isomorphism is given by $(i,n)\mapsto (e(i), e(\mathcal{H}_{X_2}^{\mathbf{I}}(n))),$ for $(i, n)\in\mathbf{I}\oplus\mathbf{N}$. The map $e$ clearly commutes with $\partial_1,$ and the map $\partial_1$ commutes with $\mathcal{H}_{X_2}^{\mathbf{I}}$ up to homotopy by a similar argument to the previous paragraph. Indeed, $\mathcal{H}_{X_2}^{\mathbf{I}}$ is the part of the homotopy operator $\mathcal{H}_{X_2}$, which is the $v^0$ term of the chain map $\mathcal{H}_{X_2}:GCL_S(\G)\rightarrow GCL_S(\G),$ when taken modulo 2.

Finally, the isomorphism from the homology of the mapping cone of the function $V_2-V_1:GC^-(\G')[V_1][[1,1]]\rightarrow GC^-(\G')[V_1]$ to $GH^-(\G')$ commutes with the induced map of $\partial_1.$ Indeed, this isomorphism is proved in [OSS, Lemma 5.2.16] as an isomorphism of $\mathbb{F}[V_2, \dots, V_n]$-modules; this identical proof shows we have an isomorphism of $\mathbb{F}[V_2, \dots, V_n, \partial_1]$-modules.
\end{proof}

\begin{corollary}
The homology of the chain complex $(GH^-(\G), \partial_{1*})$ is a knot invariant.
\end{corollary}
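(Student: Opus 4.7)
The plan is to observe that the corollary is essentially a formal consequence of the theorem it follows, together with the observation made earlier in the section that $\partial_1^2$ is nullhomotopic. Since the theorem already establishes that $\partial_{1*}$ is a well-defined differential-like endomorphism of $GH^-(\G)$ which is preserved (up to the knot-invariance isomorphism of $GH^-$) by the grid moves, essentially all that remains is to package the statement correctly.

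More concretely, first I would recall that the relation $\partial_0\partial_2 + \partial_1^2 + \partial_2\partial_0 = 0$ derived earlier implies that $\partial_{1*}^2 = 0$ on $GH^-(\G)$, so that $(GH^-(\G), \partial_{1*})$ is indeed a chain complex whose homology makes sense. Next, for two grid diagrams $\G$ and $\G'$ of the same knot related by a single commutation or SW-(de)stabilization, I would invoke the proof of the theorem: in each of those cases we produced an explicit chain-level map (the pentagon map $P_L$, or the map built from $e$ and $\mathcal{H}_{X_2}^{\mathbf{I}}$) which induces the knot-invariance isomorphism on $GH^-$ and which commutes with $\partial_1$ up to chain homotopy. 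Passing to homology, these maps induce an isomorphism $\Phi: GH^-(\G)\to GH^-(\G')$ satisfying $\Phi\circ\partial_{1*} = \partial_{1*}\circ\Phi$ strictly.

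Thus $\Phi$ is an isomorphism of chain complexes $(GH^-(\G),\partial_{1*}) \to (GH^-(\G'),\partial_{1*})$, and therefore induces an isomorphism on their homologies. By Theorem \ref{cromwell}, any two grid diagrams representing the same knot are connected by a finite sequence of commutations, switches, and SW-(de)stabilizations, so composing the isomorphisms along such a sequence gives the desired invariance. The only nontrivial input beyond Theorem \ref{cromwell} is the theorem immediately preceding the corollary, and the fact that chain homotopy commutation at the chain level descends to strict commutation on homology; both are already in hand. There is no real obstacle here — the corollary is a bookkeeping consequence of the theorem, and the main thing to be careful about is distinguishing the strict commutation on $GH^-$ from the only-up-to-homotopy commutation on $GCL_S$.
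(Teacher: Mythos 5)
Your argument is correct and is exactly the intended reading: the paper offers no separate proof for this corollary, treating it as an immediate consequence of the preceding theorem (which supplies isomorphisms of $GH^-$ strictly intertwining $\partial_{1*}$) together with the earlier observation that $\partial_0\partial_2+\partial_1^2+\partial_2\partial_0=0$ makes $\partial_{1*}$ a differential on homology. Your write-up just makes that bookkeeping explicit.
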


\begin{lemma}
The image of $\partial_{1*}$ is $U$-torsion.
\end{lemma}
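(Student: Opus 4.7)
The plan is to combine the structure theorem for $GH^-(K)$ as a finitely generated $\mathbb{F}[U]$-module with a Maslov parity argument. Recall that $GH^-(K)\cong \mathbb{F}[U]\cdot g \oplus \mathcal{T}$, where $\mathcal{T}$ is the $U$-torsion submodule and $g$ generates the free tower. An element is $U$-torsion if and only if it lies in $\mathcal{T}$, so it suffices to show that the image of $\partial_{1*}$ is contained in $\mathcal{T}$.

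The first step is to record the bidegree of $\partial_1$. Since $\partial$ has bidegree $(-1,0)$ and $v$ has bidegree $(2,0)$, the identity $\partial = \sum_n v^n \partial_n$ forces $\partial_n$ to have bidegree $(-1-2n,0)$. In particular $\partial_1$, and hence $\partial_{1*}$, has bidegree $(-3,0)$, which shifts Maslov parity. Next, observe that $\partial_1$ is $\mathbb{F}[V_1,\dots,V_n]$-linear directly from its definition as a count of rectangles weighted by monomials in the $V_i$, so $\partial_{1*}$ commutes with the $U$-action on $GH^-$.

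Now take any homogeneous $[\xi]\in GH^-$ and write $[\xi] = P(U)\cdot g + t$ with $t\in\mathcal{T}$. Then $\partial_{1*}([\xi]) = P(U)\,\partial_{1*}(g) + \partial_{1*}(t)$. The second summand is in $\mathcal{T}$ since any $\mathbb{F}[U]$-module map preserves torsion. For the first, note that $U$ has bidegree $(-2,-1)$, so every element of the free tower $\{U^k g : k\geq 0\}$ has Maslov grading $M(g)-2k$, all of a single parity $M(g)\bmod 2$. However, $\partial_{1*}(g)$ has Maslov grading $M(g)-3$, of the opposite parity. Since the free tower occupies only one Maslov parity, the projection of $\partial_{1*}(g)$ to the $\mathbb{F}[U]$-summand must vanish, i.e.\ $\partial_{1*}(g)\in\mathcal{T}$. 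Therefore $\partial_{1*}([\xi])\in\mathcal{T}$, so the image of $\partial_{1*}$ is $U$-torsion.

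There is no real obstacle here; the argument is a bookkeeping exercise once one invokes the structure theorem. The only substantive inputs are the $V_i$-linearity of $\partial_1$ (immediate from its definition) and the splitting $GH^-(K)\cong\mathbb{F}[U]\oplus\mathcal{T}$ with the free tower concentrated in a single Maslov parity (an immediate consequence of $U$ having even Maslov degree).
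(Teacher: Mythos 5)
Your proof is correct and follows essentially the same route as the paper's: both combine the $U$-equivariance of $\partial_{1*}$ with its bidegree $(-3,0)$ and the structure theorem for $GH^-(K)$ over $\mathbb{F}[U]$. The only cosmetic difference is that you kill the free component of $\partial_{1*}(g)$ directly via the Maslov parity of the tower, whereas the paper argues by contradiction using the fact that the nontorsion part is supported on the line $M-2A=0$ and then applies $U$-equivariance to the (necessarily torsion) source.
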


\begin{proof}
By definition, $\partial_{1*}$ commutes with $U$. Also, $\partial_{1*}$ lowers the grading by $(-3, 0).$ 

Suppose that $\xi\in GH^-(K)$ was such that $\partial_{1*}(\xi)$ is $U$-nontorsion, then $\xi$ must be $U$-torsion since the $U$-nontorsion elements of $GH^-(K)$ are all supported in a line of slope $d-2s=0.$ Thus, there exists $k$ such that $U^k\xi=0,$ so $$U^k\partial_{1*}(\xi)=\partial_{1*}(U^k\xi)=\partial_{1*}(0)=0,$$ a contradiction.
\end{proof}

Since we know that the $U$-torsion elements of $GH^-(K)$ form a finite-dimensional vector space, we now extract some more concrete invariants from $\partial_{1*}$:

\begin{itemize}
	\item The bigraded $\mathbb{F}$-vector space $\partial_{1*}(GH^-(K))$
	\item The maximum $k$ such that $U^k(\partial_{1*}(GH^-(K)))$ is nontrivial.
	\item The maximum $k$ such that $\partial_{1*}(U^k\zeta)\neq0,$ where $\zeta\in GH^-(K)$ is a homogeneous $U$-nontorsion element of grading $(-2\tau, -\tau)$.
\end{itemize}

\subsection{$\partial_{1*}$ for $\widehat{GH}(K)$}

\begin{lemma}
The map $\partial_1$ induces a map $\partial_{1*}:\widehat{GH}(\G)\rightarrow\widehat{GH}(\G)$ which is a well-defined knot invariant.
\end{lemma}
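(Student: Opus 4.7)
The plan is to mirror the proof just given for $\partial_{1*}$ on $GH^-(\G)$, but work throughout in the quotient $\widehat{GC}(\G) := GC^-(\G)/(V_n=0)$. First I would observe that since $\partial$ is $\mathbb{F}[V_1,\dots,V_n,v]$-linear by construction, each $\partial_k$ in the expansion $\partial = \sum_{k\geq 0} v^k\partial_k$ is $\mathbb{F}[V_1,\dots,V_n]$-linear. In particular $\partial_1$ preserves the ideal $(V_n)$ and so descends to an operator on $\widehat{GC}(\G)$. The relation $\partial_0\partial_1 + \partial_1\partial_0 = 0$ extracted from $\partial^2=0$ (as in the preceding subsection) passes to the quotient, so $\partial_1$ is a chain map on $(\widehat{GC}(\G),\partial_0)$ and induces the desired endomorphism $\partial_{1*}:\widehat{GH}(\G)\to\widehat{GH}(\G)$.

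For knot invariance I would appeal to Theorem~\ref{cromwell} and mimic the proof of the previous theorem in the hat quotient. Fix once and for all an $O$-marking whose variable is to be set to $0$, chosen so that it is not altered by any of the commutation, switch, or $X{:}SW$ stabilization moves in a given Cromwell sequence (this is possible by performing stabilizations only at other markings). For commutation or switch invariance, the pentagon chain map $P_S:GCL_S(\G)\to GCL_S(\G')$ of Section 5.1 is $V_n$-linear, so it reduces to a chain map $\widehat{P}_L$ between the $V_n=0$ quotients. Writing $\widehat{P}_L = \widehat{P}_0 + v\widehat{P}_1 + \cdots$ and reading off the $v^1$-coefficient of $\partial\circ\widehat{P}_L + \widehat{P}_L\circ\partial = 0$ gives
\begin{equation*}
\partial_0\widehat{P}_1 + \widehat{P}_1\partial_0 + \partial_1\widehat{P}_0 + \widehat{P}_0\partial_1 = 0,
\end{equation*}
so $\widehat{P}_1$ is a chain homotopy between $\partial_1\widehat{P}_0$ and $\widehat{P}_0\partial_1$. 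Since $\widehat{P}_0$ is exactly the standard pentagon-counting quasi-isomorphism defining commutation invariance of $\widehat{GH}$, this shows $\partial_{1*}$ is preserved. For $X{:}SW$ stabilization I would run the same argument using the mapping-cone quasi-isomorphism $D_S$ of Section 5.2 reduced modulo $V_n$: each constituent (namely $e$, the cone projection $\pi$, and the homotopy $\mathcal{H}_{2,S}$) is $V_n$-linear, so the reduction is well-defined, and extracting the $v^1$ coefficient of the chain-map identity for $D_S$ again produces the needed homotopy between $\partial_1$ and its push-forward through the stabilization quasi-isomorphism.

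The main obstacle is purely bookkeeping: tracking the fixed reference marking across a Cromwell sequence, and ensuring at each step that the invariance quasi-isomorphism and the attendant homotopy operator descend sensibly to $V_n=0$. Since commutations and switches do not alter any $O$-marking and $X{:}SW$ stabilizations only introduce a new $O$-marking distinct from the reference one, this compatibility is routine to arrange. Given the coherence already established for $\partial_{1*}$ on $GH^-(\G)$, the rest of the argument is a direct transcription to the hat quotient.
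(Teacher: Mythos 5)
Your first two steps (that $\partial_1$ descends to $GC^-(\G)/V_n$ because it is $\mathbb{F}[V_1,\dots,V_n]$-linear, and that it is a chain map there) are fine, and extracting the $v^1$-coefficient of the chain-map identities is the same mechanism used for the unblocked case. The gap is in how you treat the dependence on the choice of marking, which is precisely what the paper's proof identifies as the content of the lemma: one must show that $\partial_1$ commutes, up to homotopy, with the canonical isomorphisms $H(GC^-(\G)/V_i)\cong H(GC^-(\G)/V_j)$. The paper does this by checking compatibility of $\partial_1$ (acting on mapping cones by $(c,c')\mapsto(\partial_1 c,\partial_1 c')$) with the quasi-isomorphism $\mathrm{Cone}(V_i)\to GC^-(\G)/V_i$ given by projection and with the isomorphism $\mathrm{Cone}(V_i)\to\mathrm{Cone}(V_j)$, $(c,c')\mapsto(c,\mathcal{H}(c)+c')$, where $\mathcal{H}$ is the homotopy between $V_i$ and $V_j$. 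Your plan to sidestep this by fixing a reference $O$-marking untouched by the entire Cromwell sequence cannot be arranged: Theorem \ref{cromwell} hands you a sequence of moves, and a destabilization occurring in it may remove exactly the $O$-marking you fixed; you do not get to reroute the sequence. Even when the reference marking survives, the stabilization step inherently compares quotients by different variables, because the hat-level identification of the stabilized complex with the mapping cone of $V_1-V_2$ already invokes the null-homotopy $\mathcal{H}_{2,S}$ and the passage from $\mathrm{Cone}(V_i)$ to $GC^-/V_i$. So the ``routine bookkeeping'' you defer is the missing step, not an afterthought.

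Once that compatibility is established --- and it does follow from the previous subsection, since every map involved is assembled from pieces already shown to commute with $\partial_1$ up to homotopy --- the well-definedness and the invariance under grid moves are inherited from the unblocked case rather than re-proved move by move, which is the route the paper takes.
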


\begin{proof}
Let $\pi_i:GC^-(\G)\rightarrow GC^-(\G)/V_i$ be the projection map.

From the previous subsection, it is sufficient to show that $\partial_1$ commutes with isomorphisms from $H(GC^-(\G)/V_i)$ to $H(GC^-(\G)/V_j)$ (since then the arguments that $\partial_{1*}$ is a knot invariant would then apply immediately.)

Each such isomorphism comes from composing quasi-isomorphisms $\text{Cone}(V_i)\rightarrow GC^-/V_i$ induced by projection $(c, c')\mapsto \pi_i(c'),$ and isomorphisms $\text{Cone}(V_i)\rightarrow\text{Cone}(V_j)$ given by $(c, c')\mapsto(c, \mathcal{H}(c)+c'),$ where $\mathcal{H}$ is the homotopy operator from $V_i$ to $V_j.$

By the previous subsection, we know that $\partial_{1}$ commutes with all of these maps up to homotopy (when we consider the action of $\partial_{1}$ on the mapping cone as $\partial_1(c, c')=(\partial_1c, \partial_1c')$), which establishes the result.
\end{proof}

Now, from [OSS Chapter 7.1] we know that $\widehat{GH}(m(K)),$ where $m(K)$ is the mirror of the knot $K,$ is canonically isomorphic to the dual vector space $\widehat{GH}(K)^{\vee}.$ Repeating our entire above discussion with sign-refined, we may use the universal coefficient theorem to give ourselves a coefficient field of $\mathbb{R}$ rather than $\mathbb{F}$ (which we will denote as $\widehat{GH}(K; \mathbb{R}).$

\begin{lemma}
If $K$ is an amphicheiral knot, then there exists at least one nondegenerate bilinear form $\langle\cdot, \cdot\rangle$ on $\widehat{GH}(K; \mathbb{R})$ for which $\partial_{1*}$ is self-adjoint and the image of $\partial_{1*}$ is an isotropic subspace.
\end{lemma}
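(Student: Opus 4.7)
The plan is to combine the amphicheiral symmetry of $K$ with the mirror duality from [OSS Chapter 7.1] to build a canonical nondegenerate bilinear form, and then deduce self-adjointness and isotropy by two naturality arguments. First, [OSS Chapter 7.1] provides a canonical isomorphism $D: \widehat{GH}(m(K); \mathbb{R}) \to \widehat{GH}(K; \mathbb{R})^{\vee}$ arising from a chain-level duality, and amphicheirality provides a diffeomorphism $K \cong m(K)$ which induces an isomorphism $\psi: \widehat{GH}(K; \mathbb{R}) \to \widehat{GH}(m(K); \mathbb{R})$. Defining $\langle x, y \rangle := (D \circ \psi)(x)(y)$ on $\widehat{GH}(K; \mathbb{R})$ produces a nondegenerate bilinear form because $D \circ \psi$ is an isomorphism.

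Next I would establish self-adjointness by breaking the desired identity $\langle \partial_{1*} x, y \rangle = \langle x, \partial_{1*} y \rangle$ into two intertwinings. Since $\psi$ is induced by a homeomorphism and the preceding subsection established $\partial_{1*}$ as a knot invariant, $\psi$ commutes with $\partial_{1*}$ by naturality. For the duality $D$, I would argue that the chain-level mirror identification of [OSS Chapter 7.1] interchanges the count of rectangles defining $\partial_1$ on $\widehat{GC}(m(\G))$ with the transpose of the corresponding count on $\widehat{GC}(\G)$, yielding $D \circ \partial_{1*}^{m(K)} = (\partial_{1*}^K)^{\vee} \circ D$. Combining these two intertwinings gives $(D \circ \psi) \circ \partial_{1*}^K = (\partial_{1*}^K)^{\vee} \circ (D \circ \psi)$, which is precisely self-adjointness with respect to $\langle \cdot, \cdot \rangle$.

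The isotropy of the image is then automatic from $\partial_{1*}^2 = 0$ on $\widehat{GH}$, which we already recorded (it follows from $\partial_0\partial_2 + \partial_1^2 + \partial_2\partial_0 = 0$ together with the fact that $V_n$ acts trivially on $\widehat{GH}$). Indeed, for $u, v \in \widehat{GH}(K; \mathbb{R})$, self-adjointness immediately gives $\langle \partial_{1*} u, \partial_{1*} v \rangle = \langle u, \partial_{1*}^2 v \rangle = 0$.

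The principal obstacle is the second naturality statement: carefully unwinding the chain-level duality in [OSS Chapter 7.1] to confirm that it intertwines $\partial_1$ on the mirror with the dual of $\partial_1$ on the original, as opposed to with some other $\partial_k$ or its negative. Concretely, this requires tracking how the combinatorial condition $|\Int(r) \cap \x| = 1$ behaves under the involution on grid diagrams inducing the mirror. If the calculation produces anti-self-adjointness rather than self-adjointness, the conclusion that the image is isotropic is unaffected (the same identity $\langle \partial_{1*} u, \partial_{1*} v \rangle = \pm \langle u, \partial_{1*}^2 v \rangle = 0$ holds), and the flexibility in the phrase \emph{at least one} nondegenerate bilinear form allows one to replace $\langle \cdot, \cdot \rangle$ by a symmetrized or signed variant on which $\partial_{1*}$ is genuinely self-adjoint.
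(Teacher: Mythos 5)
Your proposal is correct and follows essentially the same route as the paper: the paper's map $\Omega$ is exactly your composite $D\circ\psi$, self-adjointness is obtained by the same naturality/duality intertwining (which the paper compresses into ``following the isomorphisms in [OSS Chapter 7]''), and isotropy follows identically from $\partial_{1*}^2=0$. You are simply more explicit than the paper about the one point that actually needs checking --- that the chain-level mirror duality carries $\partial_1$ to the dual of $\partial_1$ --- which is a virtue, not a divergence.
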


\begin{proof}
We suppress the $\mathbb{R}$-coefficients for convenience here. The invariance of $\widehat{GH}$ on the grid presentation of a knot gives a bigraded isomorphism $\Omega:\widehat{GH}(K)\rightarrow\widehat{GH}(m(K))\cong\widehat{GH}(K)^{\vee},$ which in turn induces such bilinear form $$\langle x, y\rangle:= \Omega(x)(y).$$ Following the isomorphisms in [OSS Chapter 7], we see that $(\Omega(x))(\partial_{1*}y) = \Omega(\partial_{1*}x)(y),$ which proves the self-adjoint claim. Since $\partial_{1*}^2=0,$ we get that the image of $\partial_{1*}$ is an isotropic subspace.
\end{proof}

\begin{remark}
We hope that, perhaps, we could prove that $\partial_{1*}$ is zero by computing the signature of one such bilinear form and showing it is positive definite, say; this would require the bilinear form to be symmetric, which is a difficult question. It is another question of interest whether this bilinear form depends on the particular isotopy of $K$ into $m(K).$

When $K$ is alternating, say, then we know that $\widehat{GH}$ is dimension $\leq1$ in each bigrading, hence the fact that $\Omega$ is bigraded forces each such bilinear form to be diagonal with respect to the basis consisting of nonzero homogeneous elements, hence symmetric.
\end{remark}

\begin{remark}
We still have that $\partial_{1*}$ changes the grading $(Maslov, Alexander)$ by $(-3, 0).$ A python search gives that 18 is the smallest crossing number of a prime knot with $\widehat{GH}$ nonzero in gradings differing by $(-3, 0)$.
\end{remark}

\subsection{The connection between $GHL(K)$ and $\tau$}

\begin{theorem}
There is a spectral sequence $\{E_r, d_r\}$ where $E_2 = GH^-(\G)[v]$ and $d_2([\x]) = v\partial_{1*}([\x])$, which converges to $GHL(\G)$ as $\mathbb{F}[U]$-modules. The terms $\{E_n, d_n\}$ for $n\geq1$ are knot invariants.
\end{theorem}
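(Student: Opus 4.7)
The plan is to construct this spectral sequence from the $v$-filtration on $GCL(\G)$ and then match its pages to the statement of the theorem. Set $F^p := v^p \cdot GC^-(\G)[v] \subseteq GCL(\G)$. Since the full differential $\partial = \sum_{n\geq 0} v^n \partial_n$ only preserves or raises $v$-degree, each $F^p$ is a subcomplex, and $F^\bullet$ is a decreasing filtration of $GCL(\G)$. In any fixed bidegree $(M,A)$, the relations $A(\x) - \sum k_i = A$ and $M(\x) - 2\sum k_i + 2\ell = M$ pin $\ell$ down uniquely for each allowed grid state and each $\vec{k}$, so the filtration is finite in each bidegree. The standard spectral sequence of a filtered complex then converges strongly to $H(GCL(\G), \partial) = GHL(\G)$ as a bigraded $\mathbb{F}[U]$-module (the $U$-action, coming from any $V_i$, is filtration-preserving).

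Next I would identify the first pages. The associated graded of $F^\bullet$ is naturally $GC^-(\G)[v]$, equipped with the differential induced by the $v$-preserving part of $\partial$, namely $\partial_0$. Its homology is $GH^-(\G)[v]$, which is the $E_2$ page of the theorem (here the author's convention places the associated graded on the $E_1$ page, shifted by one from the Cartan--Eilenberg convention). The next differential is induced by the filtration-raising-by-one part of $\partial$, namely $v\partial_1$, and on homology it descends to the map $[\x] \mapsto v\partial_{1*}([\x])$, which is exactly $d_2$ in the theorem.

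For the invariance of $\{E_n, d_n\}$ for $n \geq 1$, by Theorem \ref{cromwell} it suffices to produce filtered quasi-isomorphisms under commutation, switch, and $X:SW$ stabilization. Each chain map built in Sections 5 and 6 --- the pentagon map $P_S$, the hexagon map $H_S$, and the stabilization quasi-isomorphism $D_S$ --- is a polynomial in $v$ of $v$-degree at most $1$, since the $v$-weights $v^{\mathcal{T}(\cdot)}$ in their definitions are always $0$ or $1$. Hence each is a filtered chain map, and its $v^0$-component is precisely the corresponding chain map in ordinary grid homology, because $\mathcal{T}(r) = 0$ picks out exactly the short rectangles, pentagons, or hexagons with empty interior used in [OSS]. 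These classical maps are quasi-isomorphisms of the $\partial_0$-complexes, so the induced maps on associated graded are quasi-isomorphisms, and the standard comparison theorem for spectral sequences of filtered complexes bounded in each bidegree then gives that the induced maps on every $E_n$ (for $n \geq 1$ in the author's indexing) are isomorphisms.

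The main friction point is verifying that the stabilization quasi-isomorphism $D_S$ from Section 5.2, built from $\mathcal{H}_{2,S}$ together with the mapping-cone identification, is a filtered chain map whose associated-graded piece is the classical stabilization quasi-isomorphism. This reduces to observing that $\mathcal{H}_{2,S}$ is itself a polynomial in $v$ of $v$-degree at most $1$ with $v^0$-part equal to the classical homotopy operator in the grid-homology stabilization invariance proof --- both immediate from the definition, since $\mathcal{H}_{2,S}$ counts rectangles weighted by $v^{\mathcal{T}(r)}$ with $\mathcal{T}(r) \in \{0,1\}$. Once these checks are in place, convergence and invariance follow from routine homological algebra of filtered complexes.
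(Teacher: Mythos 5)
Your proposal is correct and follows essentially the same route as the paper: filter $GCL(\G)$ by powers of $v$, identify the $E_2$ page and $d_2$ with $GH^-(\G)[v]$ and $v\partial_{1*}$, obtain convergence from the grading constraints, and deduce invariance of the pages from the fact that the commutation/stabilization chain maps are filtered with $v^0$-part equal to the classical quasi-isomorphisms. One small inaccuracy that does not affect the argument: $\mathcal{T}(r)=|\Int(r)\cap\x|$ for a short rectangle can exceed $1$, so these maps need not have $v$-degree at most $1$; but they are still filtered (they only preserve or raise $v$-degree), which is all your argument actually uses.
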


\begin{proof}
This follows immediately from the fact that $GCL(\G) = \oplus_{n=0}^\infty v^nGC^-(\G),$ which is an expression of $GCL$ as the associated graded object of the filtration $\mathcal{F}^pGC^+(\G) = \oplus_{n=p}^\infty v^nGC^-(\G).$ Clearly, this filtration is respected by $\partial.$ Furthermore, unwinding definitions, it is clear that $d_2([\x]) = v\partial_{1*}([\x]).$ 

The spectral sequence converges by grading reasons. Indeed, each differential $d_r$ increases the filtration by $r-1$ but decreases the Maslov grading by $2r-1$; since the set of Maslov gradings for each fixed Alexander grading is finite, we must have convergence.

The fact that $\{E_n, d_n\}$ for $n\geq1$ are knot invariants follows from the fact that the maps on $GH^+$ used to prove that $GH^+$ is a knot invariant all preserve the filtration (see [McC Chapter 3]).
\end{proof}

\begin{remark}
Unfortunately, we lose the structure of the $v$ map in this spectral sequence.
\end{remark}

\begin{remark}
This is the same proof strategy used in Rasmussen's proof of the invariance of the Lee-Rasmussen spectral sequence in [R10].
\end{remark}

This prompts us to define many alternatives to $\tau.$

\begin{definition}
Let $K$ be a knot.
\begin{itemize}
	\item $\tau^+(K)$ is -1 times the maximum Alexander grading of a $U$- and $v$-nontorsion homogeneous element in $GHL(K).$
	\item $\tau^+_U(K)$ is -1 times the maximum Alexander grading of a $U$-nontorsion homogeneous element in $GHL(\G).$
	\item $\rho(K)$ is the maximum $k$ such that the equation $U^k\xi = 0$ has a nonzero solution $\xi\in GHL(K).$
\end{itemize}
\end{definition}

From the spectral sequence and the definition, we immediately get the following lemma:

\begin{lemma}
For any knot $K,$ $-\tau^+(K)\leq -\tau^+_U(K)\leq -\tau(K).$
\end{lemma}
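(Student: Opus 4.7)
The first inequality $-\tau^+(K) \leq -\tau^+_U(K)$ is immediate from the definitions: any $U$- and $v$-nontorsion homogeneous element is in particular $U$-nontorsion, so the maximum Alexander grading over the (smaller) class of $U$- and $v$-nontorsion elements is at most the maximum over the (larger) class of $U$-nontorsion elements.

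For the second inequality $-\tau^+_U(K) \leq -\tau(K)$, the plan is to show that for every $A > -\tau(K)$, every homogeneous element of $GHL(K)$ at Alexander grading $A$ is $U$-torsion; the inequality then follows from the definitions of $\tau^+_U$ and $\tau$. The main tool is the long exact sequence in homology associated to the short exact sequence of chain complexes $0 \to vGCL(\G) \to GCL(\G) \to GC^-(\G) \to 0$, which at fixed Alexander grading $A$ takes the form
$$\cdots \to GHL_{M-2,A} \xrightarrow{v} GHL_{M,A} \xrightarrow{\pi} GH^-_{M,A} \xrightarrow{\delta} GHL_{M-3,A} \xrightarrow{v} GHL_{M-1,A} \to \cdots,$$
where $\pi$ is reduction mod $v$, $\delta$ is the connecting homomorphism, and all maps are $\mathbb{F}[U]$-equivariant. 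The hypothesis $A > -\tau(K)$ makes $GH^-_{*,A}$ entirely $U$-torsion, so by exactness the cokernel $\operatorname{im}(\pi) \subseteq GH^-_{M,A}$ of each $v$ in the sequence and the kernel $\operatorname{im}(\delta) \subseteq GHL_{M-3,A}$ (and similarly for all shifts in $M$) are $U$-torsion submodules.

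The heart of the proof is a short induction on $k \geq 1$ showing that $GHL_{M,A}/v^k GHL_{M-2k,A}$ is $U$-torsion for every $M$. The base case $k = 1$ is the cokernel statement just noted. For the inductive step, I will use the short exact sequence
$$0 \to vGHL_{M-2,A}/v^k GHL_{M-2k,A} \to GHL_{M,A}/v^k GHL_{M-2k,A} \to GHL_{M,A}/vGHL_{M-2,A} \to 0,$$
whose right term is $U$-torsion by the base case, and whose left term is the image of multiplication by $v$ on $GHL_{M-2,A}/v^{k-1}GHL_{M-2k,A}$ with kernel a quotient of the $U$-torsion submodule $\ker v \subseteq GHL_{M-2,A}$. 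Since $GHL_{M-2,A}/v^{k-1}GHL_{M-2k,A}$ is $U$-torsion by induction, so is its quotient, and extensions of $U$-torsion modules are $U$-torsion, closing the induction. Taking $k$ large enough that $GHL_{M-2k,A}=0$---which is possible because $GHL(K)$ is finitely generated as an $\mathbb{F}[U,v]$-module and hence has Maslov grading bounded below at each fixed Alexander grading---then gives that $GHL_{M,A}$ itself is $U$-torsion, as desired.

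The main obstacle is the inductive step: correctly identifying the sub- and quotient structure in the short exact sequence, and in particular verifying that $\ker v$ is $U$-torsion. The latter is where the hypothesis $A > -\tau(K)$ is essential, since without it $GH^-_{*,A}$ can contain a $U$-free summand that would allow $\ker v = \operatorname{im}(\delta: GH^-_{M+1,A} \to GHL_{M-2,A})$ to harbor $U$-nontorsion elements and break the argument.
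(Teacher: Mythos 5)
Your first inequality is fine, and your overall strategy for the second --- comparing $GHL$ to $GH^-$ through the filtration by powers of $v$ --- is exactly what the paper's one-line appeal to the spectral sequence with $E_2=GH^-(K)[v]$ amounts to. The base case of your induction is also correct: $GHL/vGHL$ injects $U$-equivariantly into $GH^-$, every element of $GH^-_{M,A}$ with $A>-\tau(K)$ is $U$-torsion by the definition of $\tau$, and torsion pulls back along an injective $U$-equivariant map. The boundedness argument at the end is fine as well.

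The gap is in the inductive step, and it comes from the fact that $U$ has bidegree $(-2,-1)$: the bigraded pieces $GHL_{M,A}/v^kGHL_{M-2k,A}$ appearing in your short exact sequence are not $\mathbb{F}[U]$-modules, so ``extensions of $U$-torsion modules are $U$-torsion'' cannot be applied to them as written. Unwinding what the extension argument actually needs: given $x\in(GHL/v^kGHL)_{M,A}$ with $A>-\tau$, the base case gives $U^jx\in vGHL/v^kGHL$ for some $j$ (the $U$-torsion order of the image of $x$ in $GH^-_{M,A}$), and $U^jx$ lives in Alexander grading $A-j$, which may well be $\le-\tau$. To conclude, you need every element of $(vGHL/v^kGHL)_{M-2j,\,A-j}$ to be $U$-torsion, i.e.\ you need the inductive hypothesis for $GHL/v^{k-1}GHL$ in Alexander grading $A-j$ --- but your induction only establishes it in Alexander gradings $>-\tau$, and it is not available below $-\tau$, which is precisely where the $U$-nontorsion part of everything in sight is allowed to live. (The same filtration-jumping issue is what keeps the spectral-sequence phrasing from being literally immediate: knowing that the symbol of $x$ in $E_\infty$ is $U$-torsion only tells you that $U^jx$ lies deeper in the filtration, at a lower Alexander grading where $E_\infty$, as a subquotient of $GH^-[v]$, can again contain $U$-nontorsion classes at the filtration level $p=(M-2A)/2$.) So some additional input is needed to rule out $U^jx$ landing on a $U$-nontorsion class at that level; as written, the induction does not close. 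The obstacle you flagged ($\ker v$ being $U$-torsion) is handled correctly by the long exact sequence; the real difficulty is the Alexander-grading shift under $U$.
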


\begin{remark}
As in [OSS Section 7.4], mirroring the knot $K$ essentially dualizes the complex $GCL(K)$ over the ring $\mathbb{F}[U, v],$ and it would be very convenient to use this fact to prove that $-\tau^+(K)=\tau^+(m(K))$, which in combination with the above lemma and the similar fact about $\tau,$ would show that $\tau^+(K)=\tau^+_U(K)=\tau(K).$ However, $\mathbb{F}[U, v]$ is not a principal ideal domain, and the complications of the universal coefficient spectral sequence render this line of proof quite difficult.
\end{remark}

\section{Example Computations with the Spectral Sequence}

The spectral sequence defined in the previous section allows us to compute the homology $GHL(K)$ for many families of knots $K$ by first computing $GH^-(K)[v].$ In this section, we compute the examples of alternating knots and torus knots; in both examples the spectral sequence collapses at the $E_2$ page, so the computation is especially simple. The computations are therefore purely algebraic, not requiring any more topological information about the knots than the structure of $GH^-(K).$

This provides a bit of evidence for the conjecture that $GHL(K)\cong GH^-(K)[v]$ always, and allows us to see examples  of what $GHL(K)$ looks like in practice.

\begin{theorem}\label{alt}
If $K$ is a quasi-alternating knot, then $GHL(K) \cong GH^-(K)[v]$ as bigraded $\mathbb{F}[U]$-modules.
\end{theorem}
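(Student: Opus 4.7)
The plan is to use the spectral sequence from the preceding theorem and show that it collapses at the $E_2$-page $GH^-(K)[v]$, then to resolve the extension problem so that the resulting bigraded vector-space identification upgrades to one of $\mathbb{F}[U]$-modules. Each differential $d_r$ for $r \geq 2$ has total bidegree $(-1,0)$ and raises the $v$-filtration by $r-1$, so on the underlying $GH^-$-factor it sends a generator at bigrading $(M_0, A_0)$ to a potential target at $(M_0 - (2r-1), A_0)$ --- preserving Alexander grading and lowering Maslov grading by an odd integer at least $3$.

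The key input will be the structure of $GH^-(K)$ for quasi-alternating $K$ from [OSS Chapter 10]: $\widehat{GH}(K)$ is thin, supported on a single diagonal $\{M - A = c\}$, and $GH^-(K)$ decomposes as a tower $\mathbb{F}[U]$ together with short torsion summands whose generators all lie on this same diagonal, with the specific pairing of the torsion coming from the ``knight-move'' structure read off the Alexander polynomial. At each fixed Alexander grading $A_0$, the possible Maslov values occurring in the support of $GH^-(K)$ form a small set with tightly controlled spacing and parity: the tower value $M_{\mathrm{tow}} = 2A_0 + c + \tau$ for $A_0 \leq -\tau$ together with any torsion value $M_{\mathrm{tor}} = A_0 + c$. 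A case-by-case analysis shows that the shifted Maslov grading $M_0 - (2r-1)$ never lies in this support, so each $d_r$ vanishes for $r \geq 2$ and the spectral sequence collapses at $E_2$.

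For the extension problem, the same parity/spacing considerations force each bigrading $(M, A)$ of $GHL(K)$ to receive nonzero contribution from at most one filtration level $\mathcal{F}_p/\mathcal{F}_{p+1}$: the shifts $2p$ from $v^p$ are always even, while the distinct Maslov values at a fixed Alexander grading in $GH^-(K)$ have mutually incompatible parities. The filtration on $GHL(K)$ therefore splits bigrading-by-bigrading as an $\mathbb{F}$-vector space, and since the $U$-action preserves the $v$-filtration (each $V_i$ is independent of $v$), the splitting is automatically $\mathbb{F}[U]$-linear, yielding $GHL(K) \cong GH^-(K)[v]$ as bigraded $\mathbb{F}[U]$-modules.

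The hardest step will be completing the case-analysis for the vanishing of $d_r$. The parity argument is transparent when the tower top and all torsion summands are bigraded close together, which handles the basic alternating examples. It becomes more delicate when the Seifert genus substantially exceeds $|\tau(K)|$, as in connected sums such as $T(2,5) \# T(2,5)^*$, where a tower element and a distant torsion element can \emph{a priori} differ in Maslov grading by the wrong odd integer. To rule out these residual coincidences I would either sharpen the combinatorial description of the knight-move pairing (showing that such alignments never actually produce valid targets in the decomposition), or induct on the recursive definition of quasi-alternating knots using the skein exact sequence of Theorem \ref{skein}, bootstrapping the collapse from the unknot base case.
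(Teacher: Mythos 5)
Your overall strategy---the spectral sequence with $E_2=GH^-(K)[v]$ converging to $GHL(K)$, collapsed using thinness of quasi-alternating knots---is the paper's, but the step you yourself flag as ``the hardest'' is a genuine gap, and neither of your proposed remedies is how it gets closed. The coincidence you worry about is real and cannot be ruled out by grading combinatorics alone: the differentials preserve the Alexander grading, drop the Maslov grading by the odd number $2r-1$, and raise the $v$-power by $r-1$; in your own example $T(2,5)\# T(2,5)^*$ (with $\tau=0$) the torsion class at bigrading $(-3,-3)$ sits exactly one Maslov grading above $v\cdot U^3\cdot(\text{tower generator})$, which lives at $(-4,-3)$ in $v$-filtration level $1$, so this is a legitimate target for $d_2$ as far as the support of $GH^-(K)[v]$ is concerned. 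No sharpening of the knight-move description can remove it, because the alignment genuinely occurs in the bigraded support. What kills it is module structure, not gradings: each $d_r$ is $\mathbb{F}[U]$-linear, hence carries $U$-torsion classes to $U$-torsion classes, so the image of a torsion class must again lie in the torsion part; at a fixed Alexander grading all torsion classes of $GH^-(K)[v]$ share one Maslov parity (the generators lie on the single diagonal $M-A=-\tau$ and $v$ shifts Maslov by $2$), while $d_r$ flips that parity, so $d_r$ vanishes on the torsion part. On the tower one argues separately: $d_r$ strictly increases the $v$-power, and at each Alexander grading $A_0\le-\tau$ every class of strictly larger $v$-power---tower or torsion---has strictly larger Maslov grading, so the tower is annihilated for degree reasons. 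Your alternative of inducting through the quasi-alternating skein recursion is a much heavier route with no evident mechanism for propagating collapse of the spectral sequence along the exact triangle, and should be abandoned in favor of the $U$-equivariance argument.

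A secondary issue: your resolution of the extension problem rests on the claim that the distinct Maslov values at a fixed Alexander grading have incompatible parities, which is false whenever $A_0+\tau$ is even---there the tower value $2A_0$ and the torsion value $A_0-\tau$ agree mod $2$, so a single bigrading of $GHL(K)$ can receive contributions from two $v$-filtration levels and the filtration does not split bigrading-by-bigrading. Raising the extension question is reasonable (the paper's proof does not address it explicitly), but settling it requires an argument about the $U$-module structure of the filtered object rather than a parity count of filtration levels.
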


\begin{proof}
Let $K$ be quasi-alternating. It is sufficient to show that the spectral sequence with $E_2$ page $GH^-(K)[v]$ converging to $GHL(K)$ collapses at the $E_2$ page.

By [OSS Chapter 10], we know that the $U$-torsion part of the grid homology $GH^-(K)$ is supported in bigradings $(M,A)$ with $M-A=-\tau,$ and the $U$-nontorsion ``tail'' is supported in $M-2A =0,$ which begins on the line $M=A$ and extends in one direction.

Hence, $GH^-[v](K),$ which is what we get when we first take $GC^-[v](K)$ and take homology with respect to $\partial_0,$ must be supported in copies of these shapes each differing from the previous by an addition of 2 in Maslov grading. In particular, all homogeneous $U$-torsion elements have the same parity of their Maslov grading.

We now wish to conclude, inductively, that all the higher differentials in the spectral sequence are trivial.

If $\xi\in GH^-[v](K)$ is $U$-torsion, then so is $d_i(\xi)$ for any $i$ by $U$-equivariance of the differential. Thus, any torsion elements must be in the kernel of $d_i$ for each $i$ since $d_i$ reverses the parity of the Maslov grading.

Now, note that each differential in the spectral sequence increases the coefficient of $v$ strictly. But, any point on the tail $M-2A=0$ is strictly lower in Maslov grading than any point of equal Alexander grading with a higher $v$ coefficient. Thus, the differentials must all be zero there as well.
\end{proof}

\begin{theorem}\label{tor}
If $K$ is a torus knot, then $GHL(K) \cong GH^-(K)[v]$ as bigraded $\mathbb{F}[U]$-modules.
\end{theorem}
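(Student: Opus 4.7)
The plan is to follow the strategy of Theorem \ref{alt}: prove that the spectral sequence of the previous section, whose $E_2$ page is $GH^-(K)[v]$ and which converges to $GHL(K)$, collapses at $E_2$ for every torus knot $K$.

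The structural input is the Ozsv\'ath--Szab\'o description of $GH^-(K)$ for L-space knots, a class which includes all torus knots. For $K = T(p, q)$, $GH^-(K)$ decomposes as a bigraded $\mathbb{F}[U]$-module into a free tower $\mathbb{F}[U]\cdot\zeta$ with $\zeta$ in bigrading $(-2\tau, -\tau)$, together with finitely many torsion summands $\mathbb{F}[U]/(U^{c_j})\cdot\eta_j$, whose generators $\eta_j$ sit at bigradings $(M_j, A_j)$ taken from the staircase of the Alexander polynomial. An important feature of the staircase is that the Alexander-grading ``coverages'' of the different summands --- namely $[A_j - c_j + 1, A_j]$ for each torsion summand and $(-\infty, -\tau]$ for the tower --- form a pairwise disjoint family. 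Consequently, for each Alexander grading $A$ there is at most one nonzero $\mathbb{F}$-line in $GH^-(K)$, concentrated at a single Maslov grading $M(A)$. Passing to $GH^-(K)[v]$, every nonzero homogeneous element at Alexander grading $A$ and $v$-degree $n$ then sits at Maslov $M(A) + 2n$ and shares the parity of $M(A)$ regardless of $n$.

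Each differential $d_r$ for $r \geq 2$ on $E_r$ has total bidegree $(-1, 0)$ and strictly increases the $v$-coefficient. Were $d_r$ nonzero on an element at bigrading $(M(A) + 2n, A)$ at $v$-degree $n$, its image would occupy bigrading $(M(A) + 2n - 1, A)$ at some $v$-degree $m > n$. But every nonzero element at Alexander grading $A$ and $v$-degree $m$ lies at Maslov $M(A) + 2m$, which shares the parity of $M(A)$, while $M(A) + 2n - 1$ has the opposite parity. Hence no valid target exists, so $d_r = 0$ for every $r \geq 2$, the spectral sequence collapses at $E_2$, and $GHL(T(p,q)) \cong GH^-(T(p,q))[v]$ as bigraded $\mathbb{F}[U]$-modules. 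The main obstacle in this plan is the structural description of $GH^-$ for L-space knots --- in particular the pairwise disjointness of the Alexander-grading coverages of the tower and torsion summands; once this is invoked, the argument reduces to a clean parity analysis directly analogous to that of Theorem \ref{alt}.
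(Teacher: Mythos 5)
Your overall strategy is the same as the paper's: show the spectral sequence with $E_2 = GH^-(K)[v]$ collapses, using the thinness of the L-space-knot staircase. For \emph{positive} torus knots your argument is correct and in fact slightly cleaner than the paper's: the pairwise disjointness of the Alexander supports of the tower and the torsion summands does hold for the staircase (the summand built from the pair $(n_{2j}, n_{2j+1})$ of exponents of the Alexander polynomial occupies Alexander gradings $[n_{2j+1}+1, n_{2j}]$, and these intervals are separated), so each Alexander grading carries a single Maslov parity in $GH^-(K)[v]$ and the bidegree-$(-1,0)$ differentials must vanish. The paper instead splits into the non-torsion tail and the torsion part and invokes the rank-$\leq 1$ statement for $\widehat{GH}$ from [OSS Theorem 16.2.6]; the two arguments are close cousins.

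There is, however, a genuine gap: your premise that the class of L-space knots ``includes all torus knots'' is false --- only the \emph{positive} torus knots are L-space knots, and for negative torus knots the disjointness of Alexander-grading coverages fails. Already for the left-handed trefoil one has $GH^- \cong \mathbb{F}[U]_{(2,1)} \oplus \left(\mathbb{F}[U]/U\right)_{(1,0)}$: at Alexander grading $0$ the tower contributes an element in Maslov grading $0$ and the torsion summand contributes one in Maslov grading $1$, so both parities occur there and your parity obstruction evaporates. The collapse is still true in that case, but for a different reason (the target of $d_r$ must lie in strictly higher $v$-degree \emph{and} in Maslov grading exactly one less, and the specific gradings of the mirrored staircase rule this out); this is precisely why the paper appends a separate argument for negative torus knots, resting on the observation that for such knots a $U$-non-torsion homogeneous element never sits at the same Alexander grading but greater Maslov grading than a torsion one. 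You need to either restrict your structural input to positive torus knots and supply this second argument for the mirrors, or otherwise justify the collapse without the disjointness claim.
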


\begin{proof}
Again, we show the spectral sequence collapses at $E_2,$ and we do so by analyzing the $U$-torsion and $U$-non-torsion portions separately. Let $K$ be a positive torus knot $K=T_{p,q}.$ [OSS Theorem 16.2.6] describes the structure of $\widehat{GC}(K)$. In particular, it shows us, by the computation that $\tau(K)=\frac{(p-1)(q-1)}2,$ that the infinite tail begins at $\mathbb{F}_{(\delta_{-k}, n_{-k})},$ where $n_{-k} = -\frac{(p-1)(q-1)}2.$ 

Per [OSS Formula 7.6], we can express $GH^-(K) = \mathbb{F}[U]_{(-2\tau, -\tau)}\oplus_i \mathbb{F}[U]_{(d_i, s_i)}/U^{n_i}$. Let $\xi_i$ be the nontrivial element in $\mathbb{F}[U]_{(d_i, s_i)}/U^{n_i}$ that is not a multiple of $U.$ Then, the $\xi_i$ generate the $U$-torsion portion of $GH^-(K).$ Call each such $\mathbb{F}[U]_{(d_i, s_i)}/U^{n_i}$ a ``finite tail.'' 

Furthermore, any finite tails in $GC^-(K)$ contribute precisely two terms to $\widehat{GC}(K)$ per [OSS Formula 7.6], and by this formula, we see that these finite tails must lie in higher Alexander gradings than the tail. Then, clearly any differential $d_i,$ for $i\geq2$, which increases the coefficient of $v$ strictly, cannot map any point on the infinite tail to any nontrivial point. Hence it is sufficient to consider the $U$-torsion parts.

If the map $d_2$ is nontrivial on the $U$-torsion part, then some component of it maps some homogeneous $U$-torsion element $\xi = U^k\xi_i$ to a point $\zeta$ with 1 lower Maslov grading and identical Alexander grading, which is some $U, v$-linear combination of the $\xi_j.$ By grading reasons, none of the $\xi_j$ contributing nonzero terms in the expression of $\zeta$ is $\xi_i.$ The $U$-equivariance of $d_1$ tells us that at least one tail containing some $\xi_j$, $j\neq i,$ must terminate at the same Alexander grading $A^*$ at which the tail containing $\xi_i$ terminates. But, this requires there to be dimension $\geq2$ of $\widehat{GC}(K)$ at the grading $A=A^*,$ contradicting Theorem 16.2.6. Inductively, we may use this same argument to show $d_k$ is trivial on the $U$-torsion part for $k>2.$ By the construction of the spectral sequence we must have that the sequence collapses on $E_2,$ as desired.

A similar computation holds for negative torus knots, based on the fact that for a negative torus knot, a $U$-non-torsion homogeneous element of $GH^-(K)$ never lies in the same Alexander grading but greater Maslov grading as a $U$-torsion homogeneous element of $GH^-(K)$.
\end{proof}

\begin{remark}
These results can likely be extended without much difficulty to other classes of knots with relatively thin knot Floer homology.
\end{remark}

\section{Conclusion}

It still remains open whether the homology $GHL_S(K;\mathbb{Z})$ encodes different information than $GH_S^-(K; \mathbb{Z})[v]$. This article shows that these two objects at least obey very similar topological properties. Strategies for perhaps exhibiting an isomorphism between these two objects may relate to using the mirror of a knot, as discussed in the previous section, which may prove effective at least in showing that the non-torsion parts of these objects are isomorphic. Some of the variants of $\tau$ for $GHL_S(K;\mathbb{Z})$ are possibly sharper topological invariants than $\tau$, so if these homologies are in fact different, we can extract useful data from our ventures.

One pressing open question is to explore if there is an analog of the filtered theory discussed in [OSS Chapters 13 and 14] for double-point enhanced grid homology. The seemingly natural extension of the differential in this case to the double-point enhanced world is markedly not a differential anymore. If in fact the homology $GHL_S(K;\mathbb{Z})$ is isomorphic to $GH_S^-(K; \mathbb{Z})[v]$, then we would expect some filtered theory to exist.

\section{References}

\hspace{15pt}[L] R, Lipshitz, Heegaard Floer Homology, Double Points, and Nice Diagrams. \emph{New
perspectives and challenges in symplectic field theory}, CRM Proc. Lecture
Notes, vol. 49, pp. 327-342. AMS, Providence, RI, 2009.
\medskip

[McC] J. McCleary, \emph{User's Guide to Spectral Sequences}. Mathematics Lecture Series, 12. Publish or Perish Inc, 1985.
\medskip

[MOS] C. Manolescu, P. Ozsv\'{a}th, and S. Sarkar. A combinatorial description of knot Floer homology. \emph{Annals of Mathematics}, 169(2), pp. 633-660, 2009.
\medskip

[MOST] C. Manolescu, P. Ozsv\'{a}th, Z. Szab\'{o}, and D. Thurston. On combinatorial link Floer homology. \emph{Geom. Top.}, 11, pp. 2339-2412, 2007.
\medskip

[OS] P. Ozsv\'{a}th,  Z. Sz\'{a}bo. Holomorphic disks and knot invariants. \emph{Adv. Math.}, 186(1), pp. 58-116, 2004.
\medskip

[OSS] P. Ozsv\'{a}th, A. Stipcisz, and Z. Sz\'{a}bo. \emph{Grid Homology for Knots and Links}. Mathematical Surveys and Monographs, Vol. 208, AMS 2015.
\medskip

[R03] J. Rasmussen, \emph{Floer homology and knot complements.} PhD thesis, Harvard University, 2003.
\medskip

[R10] J. Rasmussen, Khovanov homology and the slice genus. \emph{Inv. Math.}, 182(2), pp. 419-447, 2010.
\medskip

[RWW] T. Ratigan, J. Wang, and L. Wang. A combinatorial proof of invariance of double-point enhanced grid homology. math.GT/1810.03202, 2018.

\end{document}